\theoremstyle{plain}
\newtheorem{thm}{Theorem}[section]
\newtheorem{prop}[thm]{Proposition}
\newtheorem{lem}[thm]{Lemma}
\newtheorem{cor}[thm]{Corollary}
\newtheorem{conj}[thm]{Conjecture}
\theoremstyle{definition}
\newtheorem{dfn}[thm]{Definition}
\newtheorem{rem}[thm]{Remark}
\newtheorem{exa}[thm]{Example}
\newtheorem{notation}[thm]{Notation}
\newcommand{\Z}{\mathbb{Z}}
\newcommand{\N}{\mathbb{N}}
\newcommand{\Q}{\mathbb{Q}}
\newcommand{\R}{\mathbb{R}}
\newcommand{\OO}{\mathcal{O}}
\newcommand{\pr}{\mathbb{P}}
\newcommand{\Id}{\mathrm{Id}}
\newcommand{\GL}[0]{\operatorname{GL}}
\newcommand{\Aut}[0]{\operatorname{Aut}}
\newcommand{\cal}[1]{\mathcal{#1}}
\newcommand{\id}{\mathrm{id}}
\DeclareMathOperator{\Int}{Int}
\DeclareMathOperator{\Pic}{Pic}
\DeclareMathOperator{\Supp}{Supp}
\DeclareMathOperator{\NE}{NE}
\DeclareMathOperator{\Nef}{Nef}
\DeclareMathOperator{\NefE}{Nef^e}
\DeclareMathOperator{\NefP}{Nef^+}
\DeclareMathOperator{\Amp}{Amp}
\DeclareMathOperator{\Eff}{Eff}
\DeclareMathOperator{\Psef}{\overline{Eff}}
\DeclareMathOperator{\Bl}{Bl}
\DeclareMathOperator{\Cr}{Cr}
\DeclareMathOperator{\Bs}{Bs}
\DeclareMathOperator{\Mov}{Mov}
\DeclareMathOperator{\MovE}{\overline{Mov}^e}
\DeclareMathOperator{\MovP}{\overline{Mov}^+}
\DeclareMathOperator{\MovC}{\overline{Mov}}
\DeclareMathOperator{\conv}{conv}
\DeclareMathOperator{\PsAut}{PsAut}
\begin{document}
	
	\title{Cones of divisors on $\mathbb{P}^3$ blown up at eight very general points}
	
	\author[I.\ Stenger]{Isabel Stenger}
	\address{Institute of Algebraic Geometry, Leibniz University Hannover, Welfengarten 1, 30167 Hannover, Germany.}
	\email{stenger@math.uni-hannover.de}
		
	\author[Z.\ Xie]{Zhixin Xie}
    \address{Institut \'Elie Cartan de Lorraine, Universit\'e de Lorraine, 54506 Nancy, France}
    \email{zhixin.xie@univ-lorraine.fr}

		\thanks{
		\newline
		\indent 2020 \emph{Mathematics Subject Classification}: 14E07, 14E30. \newline
		\indent \emph{Keywords}: Minimal model program, Cremona transformations, infinite Coxeter groups. }

	\begin{abstract} Let $X$ be the blowup of $\pr^3$ at eight very general points. We give a complete description of its nef and effective cones.
	Moreover, we show that there exists a rational polyhedral fundamental domain for the action of a certain Weyl group on the effective movable cone of $X$.
		\end{abstract}

	\maketitle

	\begingroup
		\hypersetup{linkcolor=black}
		\setcounter{tocdepth}{1}
		\tableofcontents
	\endgroup

\section{Introduction}

Mori’s seminal work in the 1980s revealed a deep connection between the minimal models of a projective variety $X$ and the structure of the cone of nef effective divisors and the cone of movable effective divisors on $X$.
When $X$ is a Fano variety, or more generally a Mori dream space, Hu and Keel proved in \cite{HK00} that the nef cone is rational polyhedral, and the movable cone is the union of nef cones of finitely many small modifications of $X$. Moreover, both of these cones are contained in the cone of effective divisors, which is rational polyhedral.

When $X$ is a Calabi-Yau variety, these cones have a much more complicated structure. Nevertheless, the Morrison-Kawamata cone conjecture (\cite{Mor93}, \cite{Kaw97}) predicts that the groups of automorphisms and pseudo-automorphisms act with rational polyhedral fundamental domain on the effective nef and the effective movable cones of the variety respectively. Despite of being verified in several instances, the cone conjecture for Calabi-Yau varieties remains widely open; we refer the reader to \cite{LOP18} for a detailed survey on this topic. We stress that the effective cone of the variety plays an important role in the recent development on the Morrison-Kawamata cone conjecture; see \cite[Conjecture 1.3 and Theorem 1.4]{GLSW26}, see also  \cite[Conjecture 1.2]{LZ25} and \cite{Li23} for the relative setting.

In general, it is difficult to describe the nef, the movable and the effective cones of a variety, even under weaker assumptions on the anticanonical class $-K_X$.
In this article, we consider the case where $-K_X$
 is nef. This class of varieties interpolates between Fano varieties and varieties with numerically trivial canonical class, and we may therefore hope that the effective nef and effective movable cones admit a description similar to that predicted by the cone conjecture.
We note that when $-K_X$ is semiample, there are several examples where the 
 effective nef and the effective movable cones are fully described, see Subsection \ref{subsec:conjpairs}.
\subsection{Blowups of $\pr^n$ and the Weyl group}\label{subsec:blowupweyl} 

Let $X_{n,k}$ denote the blowup of $\pr^n$ in $k > n+1$ points in general position, Dolgachev \cite{Dol83} realised a root system in the group $N^1(X_{n,k})$ and interpreted the action of the Weyl group ${\cal W}_{n,k}$ corresponding to this root system by means of Cremona transformations.
Mukai \cite{Mukai01}, Castravet and Tevelev \cite{CT06} showed that the monoid of effective divisor classes in $\Pic(X_{n,k})$ is finitely generated if and only if
\begin{equation*}
\frac{1}{n+1} + \frac{1}{k-n-1} > \frac{1}{2}.
\end{equation*} 
We outline their proof as follows.
When the above inequality holds, the corresponding Weyl group ${\cal W}_{n,k}$ is finite and the effective cone $\Eff(X_{n,k})$ is spanned as a convex cone by the finitely many classes in ${\cal W}_{n,k}\cdot E_k$ (see \cite[Theorem 2.7]{CT06}), where $E_k$ is an exceptional divisor of the blowup $X_{n,k}$ of $\pr^n$. Thus $\Eff(X_{n,k})$ is a rational polyhedral cone. 

In the case of an infinite Weyl group, the effective cone $\Eff(X_{n,k})$ is only poorly understood:
Mukai showed in \cite{Mukai01} that the classes in the orbit ${\cal W}_{n,k} \cdot E_k$ are contained in any generating system of the effective cone.  If $n=3$, then $k=8$ is the smallest number which yields an infinite Weyl group ${\cal W}_{3,8}$, and hence an effective cone which is not rational polyhedral.
We show that the effective cone is closed and explicitly determine its infinitely many extremal rays:
\begin{thm}\label{main:eff} 
Let $X$ be the blowup of $\pr^3$ at eight very general points and let ${\cal W} = {\cal W}_{3,8}$. Then $\Eff(X)=\overline{\Eff}(X)$, and
$\Eff(X)$ is the convex cone generated by the classes in ${\cal W} \cdot E_8$ and the class $-\frac{1}{2}K_X$, where $E_8$ is an exceptional divisor.
\end{thm}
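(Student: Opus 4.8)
\textit{Proof proposal.} Write $C\subseteq N^1(X)_\R$ for the convex cone generated by the classes in $\cal W\cdot E_8$ together with $-\tfrac12K_X$. I would prove the theorem by establishing (i) $C\subseteq\Eff(X)$, (ii) $C$ is closed, and (iii) $\Eff(X)\subseteq C$; together these give $\Eff(X)=C=\overline{\Eff}(X)$.

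For (i) it suffices that each generator of $C$ is effective. The class $-\tfrac12K_X=2H-E_1-\cdots-E_8$ is effective because eight very general points of $\pr^3$ impose independent conditions on quadrics, so that $h^0\big(X,-\tfrac12K_X\big)=h^0\big(\pr^3,\mathcal I_{p_1+\cdots+p_8}(2)\big)=2$ (geometrically, $|-\tfrac12K_X|$ is the pencil of quadrics through the eight points). For the orbit $\cal W\cdot E_8$ it is enough to prove that $\Eff(X)$ is $\cal W$-invariant, since $E_8$ is effective. Now $\cal W$ is generated by the simple reflections of $T_{2,4,4}$: seven of them generate the group $\mathfrak S_8$ permuting $E_1,\dots,E_8$, and this subgroup preserves $\Eff(X)$ because the effective cone of the blowup of $\pr^3$ at eight very general points does not depend on the ordering of the points; the remaining reflection $s_0$ — the reflection in $\alpha_0=H-E_1-E_2-E_3-E_4$ — is induced by the quadro-quadric Cremona involution of $\pr^3$ based at $p_1,\dots,p_4$, which lifts to a pseudo-isomorphism (an isomorphism in codimension one) $X\dashrightarrow X'$ with $X'$ again the blowup of $\pr^3$ at eight very general points. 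Proper transform of divisors identifies $\Eff(X')$ with $\Eff(X)$ compatibly with $s_0$, so $s_0$ preserves $\Eff(X)$ as well. Hence $\cal W\cdot E_8\subseteq\Eff(X)$ and $C\subseteq\Eff(X)$.

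For (ii): since $\tfrac12+\tfrac14+\tfrac14=1$, the diagram $T_{2,4,4}$ is the affine Dynkin diagram $\widetilde E_7$, so $\cal W$ is an affine Weyl group whose imaginary root — the unique, up to scaling, isotropic $\cal W$-invariant class for the form defining $\cal W$ — is $-\tfrac12K_X$. Every class in $\cal W\cdot E_8$ has the same norm $-1$ and the same intersection with $-K_X$ as $E_8$, and analysing the Tits cone of $\widetilde E_7$ shows that the rays $\R_{\ge0}(w\cdot E_8)$ can accumulate only at $\R_{\ge0}(-\tfrac12K_X)$, which is itself a generator of $C$; hence $C$ is closed. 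For (iii) I would first note $\MovE(X)\subseteq C$: by Theorem \ref{main:mov}, $\MovE(X)$ is the $\cal W$-orbit of a rational polyhedral fundamental domain $\Pi$, and since $C$ is $\cal W$-invariant it suffices to check $\Pi\subseteq C$ on its (explicitly known, via Lemma \ref{lem:nefcone}) extremal rays. It then suffices to show that every prime divisor on $X$ is either movable — hence of class in $\Mov(X)\subseteq\MovE(X)\subseteq C$ — or of class in $\cal W\cdot E_8\subseteq C$, for then any effective class, being a nonnegative combination of classes of its prime components, lies in $C$. For a non-movable prime divisor $F$, running a $(K_X+\varepsilon F)$-minimal model program (available since $-K_X$ is nef on the smooth projective threefold $X$) one reaches an extremal contraction that contracts $F$ or its strict transform; the structure of that contraction forces $[F]$ to be a ``$(-1)$-class'' for the form defining $\cal W$, and the only effective such classes are those in $\cal W\cdot E_8$. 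This gives $\Eff(X)\subseteq C$, and the theorem follows.

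The main obstacle is step (iii): one must control precisely which prime divisors on $X$ (and on its small modifications appearing in the MMP) can fail to be movable and identify their classes with the Weyl orbit of $E_8$. This rests on the birational geometry developed for Theorem \ref{main:mov} — the classification of the small modifications of $X$ and of the divisorial contractions issuing from them — together with the combinatorial fact that $\cal W$ acts transitively on the effective ``$(-1)$-classes'' of $\widetilde E_7$. A secondary difficulty is the closedness in step (ii), i.e. pinning down the behaviour of the orbit $\cal W\cdot E_8$ near the imaginary root $-\tfrac12K_X$; an alternative there is to invoke the Boucksom--Demailly--P\u{a}un--Peternell duality $\overline{\Eff}(X)=\overline{\Mov}_1(X)^{\vee}$ and check that $C^{\vee}$ is generated by classes of movable curves.
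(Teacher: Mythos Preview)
Your MMP classification of non-movable primes as $(-1)$-divisors lying in ${\cal W}\cdot E_8$ matches the paper's Lemmas \ref{lem:primdiv} and \ref{lem:-1divinorbit}, but the paper's proofs of the two main assertions take different routes from yours.

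For $\Eff(X)\subseteq C$, the paper (Theorem \ref{thm:effcone}) does \emph{not} pass through $\MovE(X)$ or Theorem \ref{main:mov}: it puts an arbitrary effective class into standard form via some $w\in{\cal W}$ and decomposes it directly as a non-negative combination of the generators of $C$, using the De~Volder--Laface splitting (Proposition \ref{prop:standform}) together with a further degree-reduction by subtracting $(3;3,1^{a-1})$ when $d<m_1+m_4$. Your alternative is legitimate but carries a small error and a caveat. The error: the fundamental domain $\Pi$ whose generators you must check lie in $C$ is that of Lemma \ref{lem:standardinPi}, not the nef cone of Lemma \ref{lem:nefcone}; indeed $\Pi$ contains $3H-3E_1-\sum_{i=2}^8 E_i$, which is not nef, so $\Nef(X)$ is \emph{not} a fundamental domain for ${\cal W}$ on $\MovE(X)$. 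The caveat: you must verify that the part of Theorem \ref{main:mov} you invoke is logically independent of Theorem \ref{main:eff}---in the paper the equality $\MovE(X)={\cal W}\cdot\Pi$ is established via Corollaries \ref{cor:ratmovcone} and \ref{cor:coverMov} and Looijenga's theorem, none of which use Theorem \ref{main:eff}, but statement (i) of Theorem \ref{main:mov} does rely on Proposition \ref{prop:effconeclosed}, so care with the order is needed.

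For closedness, the paper (Propositions \ref{prop:accpoint} and \ref{prop:effconeclosed}) is geometric rather than Lie-theoretic: each $(-1)$-divisor restricts to a $(-1)$-curve on a very general $Q\in|{-}\tfrac12K_X|$, which is $\pr^2$ blown up at nine very general points (Lemmas \ref{lem:intersectnum} and \ref{lem:restrictfixdiv}), and Borcea's theorem on such surfaces forces the only accumulation ray of ${\cal W}\cdot E_8$ to be $\R_{\geq 0}(-\tfrac12K_X)$. Closedness of $\Eff(X)$ is then completed by a separate argument showing $\Psef(X)\cap K_X^\perp=\R_{\geq 0}(-\tfrac12K_X)$, using that the bilinear form is negative semidefinite on $K_X^\perp$ together with a degree-lowering Cremona applied to a hypothetical minimiser. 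Your $\widetilde E_7$ Tits-cone heuristic points in the right direction, but making it rigorous is essentially redoing this accumulation analysis; the surface-restriction argument is what the paper actually relies on.
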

Our result above shows that, in contrast to the case where $X_{n,k}$ is a Mori dream space, the (infinitely many) classes in the 
${\cal W}_{3,8}$-orbit of an exceptional divisor do not suffice to generate the effective cone.

\subsection{Cone conjecture on Calabi-Yau pairs}\label{subsec:conjpairs}
Totaro introduced a generalisation of the cone conjecture to klt Calabi-Yau pairs (see \cite{Tot08}) and verified the conjecture in dimension two (see \cite{Tot10}).  Moreover, using the example of $\pr^2$ blown up at nine very general points (see \cite{Nag60}), Totaro showed that the conjecture is in general not true for log canonical Calabi-Yau pairs. However, using a different group action on the effective nef cone, Li verified in \cite{Li25} a variant of the cone conjecture on the effective nef cone for a class of log canonical Calabi-Yau surface pairs.

When $X$ has a semiample anticanonical class, one can relate the description of the effective nef and the effective movable cones of $X$ to the cone conjecture for klt Calabi-Yau pairs: since $-K_X$ is semiample, choosing $m$ large enough so that $-mK_X$ is base-point-free, a general divisor $D\in|{-}mK_X|$ is smooth. Thus putting $\Delta=\frac{1}{m}D$, the pair $(X,\Delta)$ is a klt Calabi-Yau pair. For many examples of threefolds with semiample anticanonical class, the
cone conjecture for klt Calabi-Yau pairs has been verified; see \cite{Ps12,PS15} and \cite{CPS14,CPS19}. 

  Now  let $X $ be the blowup of $\pr^3$ at eight very general points. Then $-K_X$ is nef and \textbf{not} semiample, as shown in \cite{LesOtt16}. 
  Choosing a general divisor $\Delta\in |{-}K_X|$, the pair $(X,\Delta)$ is a log canonical Calabi-Yau pair. 
  To verify the cone conjecture for the effective movable cone $\MovE(X)$,
  one major difficulty is to determine the pseudo-automorphisms of $X$. Our strategy to overcome this problem is to describe this cone with respect to a different subgroup of $\GL\big( N^1(X)\big)$ preserving $\MovE(X)$, which is related to the geometry of $X$.
 \begin{thm}\label{main:mov}
Let $X$ and ${\cal W}$ be as in Theorem \ref{main:eff}. Then
\begin{enumerate}[leftmargin=*]
		\item We have $\MovE(X)=\MovP(X) =\MovC(X)$. 
		\item The cone $\MovE(X) $ is not rational polyhedral, but there exists a rational polyhedral cone which is a fundamental domain for the action of ${\cal W}$ on $\MovE(X) $.
	\end{enumerate}
\end{thm}
As a crucial step to prove Theorem \ref{main:mov}, we construct an explicit rational polyhedral cone $\Pi$ (see Definition \ref{def:conePi}) whose orbit covers the interior of the movable cone $\MovC(X)$, that is
\[
  \Mov(X) \subset \cal W \cdot \Pi. 
\]
Moreover, the proof relies on the following decomposition result of the effective movable cone:
\begin{prop}\label{prop:main}
Let $X$ be a smooth projective threefold such that $-K_X$ is nef. Then
\[
\MovP(X)\supset \MovE(X) = \bigcup_{\varphi\colon X\dashrightarrow X' \text{ a finite sequence of flops}} \varphi^* \big(\NefE(X') \big).
\]
\end{prop}
We point out that 
\cite[Proposition 1.1]{GLSW26} showed that the effective cone of any klt Calabi--Yau pair of dimension three admits a decomposition into Mori chambers. Although varieties with nef anticanonical class do not, in general, fall within this framework, it is natural to ask whether the decomposition of the effective movable cone $\MovE(X)$ obtained in Proposition \ref{prop:main} extends to a Mori chamber decomposition of the entire effective cone $\Eff(X)$.
On the other hand, for blowups of projective space $X_{n,k}$ as in Subsection \ref{subsec:blowupweyl}, \cite[Definition 7.6]{BDPS24} introduced a Weyl chamber decomposition of the pseudoeffective cone $\Psef(X_{n,k})$, which coincides with the Mori chamber decomposition whenever $X_{n,k}$ is a Mori dream space. Moreover, \cite[Conjecture 7.7]{BDPS24} predicts that, for the blowup $X_{3,8}$ of $\mathbb{P}^3$ at eight very general points, the Weyl chamber decomposition of $\Eff(X_{3,8})$ coincides with its Mori chamber decomposition, provided the latter exists.

For $X = X_{3,8}$, it is shown in \cite{CLO16} that the cone of curves of $X$ is rational polyhedral. As its dual cone, the nef cone of $X$ is also rational polyhedral. We construct in Lemma \ref{lem:nefcone} an explicit set of divisor classes generating the nef cone $\Nef(X)$, which are all effective. Therefore, the effective nef cone conjecture holds for $X$, see Proposition \ref{main:nef}.

Motivated by this paper,  Gachet recently verified in \cite{Gach25} that the pseudo-automorphism group of $X$ is trivial.
Consequently, Theorem \ref{main:mov} provides another example where a generalisation of the cone conjecture to log canonical Calabi-Yau pairs fails. In this sense, Theorem \ref{main:mov} can be seen as a variant of the cone conjecture for the effective movable cone with respect to a different group.

\subsection*{Acknowledgments} 
We would like to thank Vladimir Lazi\'c for his helpful comments and discussions leading to Theorem \ref{thm:g-mmp-movcone}. We are grateful to Artie Prendergast-Smith for his inspiring ideas for the proof of Proposition \ref{prop:effconeclosed}. We would like to thank 
 C\'ecile Gachet, Andreas H\"oring, John Lesieutre and Susanna Zimmermann for useful discussions. We thank the anonymous referee for valuable comments and suggestions.
The first author  is supported by the Deutsche Forschungsgemeinschaft (DFG, German Research Foundation) - Project-ID 286237555 - TRR 195.

\section{Notation and preliminaries}

In this paper we work over the field $\mathbb{C}$.

\subsection{Notation}
First we fix some notation.

Let $X$ be a normal projective variety.
Let $N^1(X)$ be the Néron-Severi group generated by the classes of Cartier divisors modulo numerical equivalence; let $N^1(X)_{\R}=N^1(X)\otimes \R$. Inside $N^1(X)_{\R}$, we have:
\begin{itemize}[leftmargin=*]
\item the effective cone $\Eff(X)$ and the pseudoeffective cone $\Psef(X)$;
\item the movable cone $\MovC(X)$, which is the closure of the cone generated by classes of effective Cartier divisors $D$ such that the base locus of $|D|$ has codimension at least $2$; and its interior $\Mov(X)$;
\item the effective movable cone $\MovE(X)\coloneqq \MovC(X)\cap \Eff(X)$;
\item the convex hull $\MovP(X)$ of all rational classes in $\MovC(X)$;
\item the nef cone $\Nef(X)$; and its interior $\Amp(X)$;
\item the effective nef cone $\NefE(X)\coloneqq \Nef(X)\cap\Eff(X)$;
\item the convex hull $\NefP(X)$ of all rational classes in $\Nef(X)$.
\end{itemize}

We call an $\R$-divisor $D$ \textit{movable} if its class is in $\MovC(X)$. 

We denote by $N_1(X)$ the group of $1$-cycles modulo numerical equivalence and $N_1(X)_{\R}=N_1(X)\otimes \R$.
Inside $N_1 ( X )_{\R}$, we consider the convex cone $\NE(X)$ generated by effective 1-cycles and its closure, the cone of curves, $\overline{\NE}(X)$. 

We denote by $\Aut(X)$ and $\PsAut(X)$ the group of automorphisms and pseudo-automorphisms of $X$, that means birational maps $X  \dashrightarrow X$ which are isomorphisms in codimension one. Any automorphism induces an isomorphism on $N^1(X)_{\R}$ which preserves the nef cone, whereas any pseudo-automorphism induces an isomorphism on $N^1(X)_{\R}$  preserving the effective and the movable cones. We denote by $\Aut(X)^*$ and $\PsAut(X)^*$ the image of $\Aut(X)$ and $\PsAut(X)$ in $\GL(N^1(X)_{\R})$, respectively.

Finally, we say that an $\R$-divisor on $X$ is an {\em NQC divisor} if it is a non-negative linear combination of nef $\Q$-Cartier divisors on $X$. The acronym NQC stands for {\em nef $\Q$-Cartier combinations}. In particular, if $X$ is $\Q$-factorial, then the set of all NQC divisors on $X$ is equal to $\Nef^+(X)$.
\subsection{The Morrison-Kawamata cone conjecture}
In this subsection we state a generalisation of the Morrison-Kawamata cone conjecture for klt Calabi-Yau pairs. 

A \textit{pair} consists of a normal variety $X$ and an effective $\R$-divisor $\Delta$ on $X$ such that the divisor $K_X+\Delta$ is $\R$-Cartier. Given a $\Q$-factorial projective klt (respectively log canonical) pair $(X,\Delta)$, we say that the pair is \textit{Calabi-Yau} if $K_X + \Delta$ is numerically trivial.

\begin{conj}{\cite{Tot08}} Let $(X,\Delta)$ be a klt Calabi-Yau pair. Then:
	\begin{enumerate}[leftmargin=*]
		\item There exists a rational polyhedral cone $\Pi$ which is a fundamental domain for the action of $\Aut(X)$ on $\NefE(X) = \Nef(X) \cap \Eff(X)$, in the sense that
		\[
		\NefE(X)= \bigcup_{g\in \Aut(X)} g^* \Pi
		\] 
		with $\Int \Pi  \cap \Int g^* \Pi = \emptyset$ unless $g^* = \id$. 
		\item There exists a rational polyhedral cone $\Pi'$ which is a fundamental domain for the action of $\PsAut(X)$ on $\MovE(X) = \MovC(X) \cap \Eff(X)$.
	\end{enumerate}
\end{conj}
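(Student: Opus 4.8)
The plan is to treat the two parts separately, following the circle of ideas of Morrison and Kawamata (\cite{Mor93,Kaw97}) combined with Looijenga's theory of fundamental domains for groups acting on convex cones; this is essentially the strategy by which Totaro settled the surface case in \cite{Tot10}.

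For part (i), I would first analyse the local structure of $\NefE(X)$ in the interior of the effective cone. For a $\Q$-factorial klt Calabi-Yau pair $(X,\Delta)$ one has $K_X+\Delta\equiv 0$, so the cone theorem and the base-point-free theorem for klt pairs apply verbatim to any nef class, and every rational supporting face of $\Nef(X)$ meeting $\Int\Eff(X)$ is cut out by a contraction and is locally rational polyhedral. The real content of the conjecture is therefore a global finiteness statement: the $\Aut(X)^*$-action on $\NefE(X)$ should have only finitely many orbits of such faces. I would deduce the existence of the fundamental domain $\Pi$ from a Looijenga-type theorem, which yields a rational polyhedral fundamental domain once one knows that $\Aut(X)^*$ is commensurable with an arithmetic group preserving the relevant integral structure on $N^1(X)$ and that the rational polyhedral part of $\NefE(X)$ already covers the whole cone modulo this group. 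The delicate point is the behaviour along $\partial\Eff(X)$, where $\NefE(X)$ need not be locally polyhedral, so that the group must be rich enough to absorb these (possibly irrational) boundary directions into finitely many orbits.

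For part (ii), the natural starting point is Kawamata's decomposition of $\MovE(X)$ into the nef cones of the marked minimal models of $(X,\Delta)$, glued along the walls corresponding to flops; in the threefold case this is exactly Proposition \ref{prop:main}. The group $\PsAut(X)^*$ preserves $\MovE(X)$ and permutes these chambers, so the conjecture reduces to showing that there are only finitely many $\PsAut(X)^*$-orbits of chambers. Feeding this finiteness, together with arithmeticity of $\PsAut(X)^*$, into the same Looijenga-type input then produces the rational polyhedral fundamental domain $\Pi'$.

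The main obstacle, in both parts, is controlling the groups $\Aut(X)^*$ and $\PsAut(X)^*$ and establishing the required finiteness of orbits: arithmeticity is far from automatic, and $\PsAut(X)$ in particular is poorly understood even for blowups of projective space (cf.\ \cite{Dol11,BH15}). Moreover the klt hypothesis is genuinely needed and must be exploited, for instance through the finiteness of minimal models: Totaro's example of $\pr^2$ blown up at nine very general points shows that for merely log canonical pairs this finiteness breaks down and the conjecture is false. For the pair of interest to us, $X=\Bl_8\pr^3$ with $\Delta\in|{-}K_X|$ general, the pair is only log canonical and hence lies outside the scope of the conjecture as stated; Theorem \ref{main:nef} nevertheless verifies conclusion (i) directly, while Theorem \ref{main:mov} establishes a variant of conclusion (ii) in which the intractable group $\PsAut(X)$ is replaced by the explicit Weyl group ${\cal W}$.
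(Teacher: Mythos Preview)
The statement you are attempting to prove is a \emph{conjecture}, not a theorem: the paper records it as the Morrison--Kawamata cone conjecture for klt Calabi--Yau pairs, citing \cite{Tot08}, and offers no proof because none is known in general. There is therefore no ``paper's own proof'' to compare your proposal against.

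Your write-up is an outline of the standard heuristic that motivates the conjecture rather than a proof. Every step you label as ``I would deduce'' or ``reduces to showing'' is precisely where the conjecture is open: (a) arithmeticity or even finite generation of $\Aut(X)^*$ and $\PsAut(X)^*$ is unknown for a general klt Calabi--Yau pair; (b) finiteness of $\PsAut(X)^*$-orbits of nef chambers in $\MovE(X)$ is equivalent to the conjecture itself, not a lemma towards it; (c) the ``Looijenga-type input'' (Theorem~\ref{thm:looi}) requires as hypothesis the existence of a rational polyhedral $\Pi$ with $G\cdot\Pi\supset C$, which is again the conjecture. In short, your proposal correctly identifies the architecture one would like, but none of the load-bearing finiteness statements are established, and indeed they constitute the content of the open problem. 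The paper's contribution is not to prove this conjecture but to verify its conclusion (i) for the specific threefold $X=\Bl_8\pr^3$ (Theorem~\ref{main:nef}) and a variant of (ii) with $\PsAut(X)$ replaced by the Weyl group ${\cal W}$ (Theorem~\ref{main:mov}), exactly as you note in your final paragraph.
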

\begin{rem} We remark that in \cite{Tot10} there exists a different formulation of the Morrison-Kawamata cone conjecture for klt Calabi-Yau pairs using the group $\Aut(X,\Delta)$ (resp.\ $\PsAut(X,\Delta)$) of automorphisms (resp.\ pseudo-automorphisms) of $X$ preserving the boundary divisor $\Delta$. As $\Aut(X,\Delta) \subset \Aut(X)$ and analogously for the pseudo-automorphisms, the version in \cite{Tot10} is stronger that the one stated above. 
\end{rem} 
A useful tool to show the existence of a fundamental domain for a group action on a convex cone is the following result of Looijenga (see also \cite[Proposition 3.3 and Lemma 3.5]{LZ25}):
\begin{thm}{\cite[Theorem 4.1 and Application 4.15]{Loo14}}\label{thm:looi}
	Let $V$ be a real vector space with rational structure $V(\Q)$, $C$ an open strictly convex cone in $V$, and let $G$ be a subgroup of $\GL(V)$ stabilising the cone $C$. Let $C_+$ denote the convex hull of $\overline{C} \cap V(\Q)$. Then the following are equivalent:
	\begin{enumerate}[leftmargin=*]
		\item There exists a rational polyhedral cone $\Pi$ in $C_{+}$ with 
		$C_{+} = G \cdot \Pi$. 
		\item There exists a rational polyhedral cone $\Pi$ in $C_{+}$ with 
		$C \subset G \cdot \Pi$. 
	\end{enumerate}
	Moreover, if these conditions are satisfied, then there exists a rational polyhedral fundamental domain for the action of $G$ on $C_{+}$.
\end{thm}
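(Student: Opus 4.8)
This statement is quoted verbatim from \cite{Loo14}, so in the paper one simply invokes it; here is the plan I would follow to prove it. The implication (i)$\Rightarrow$(ii) is immediate: rational points are dense in the open convex cone $C$, so every point of $C$ lies in the interior of the convex hull of finitely many rational points of $C$, whence $C\subseteq C_+$, and then $C_+=G\cdot\Pi$ forces $C\subseteq G\cdot\Pi$. The substance is therefore (ii)$\Rightarrow$(i) together with the final clause, and I would prove (ii)$\Rightarrow$(i) by induction on $\dim V$, deriving the fundamental domain afterwards.

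For (ii)$\Rightarrow$(i) the task is to enlarge a given rational polyhedral $\Pi_0\subseteq C_+$ with $C\subseteq G\cdot\Pi_0$ to a rational polyhedral $\Pi\subseteq C_+$ with $C_+=G\cdot\Pi$; equivalently, to cover $C_+\setminus C$. Since $C_+\subseteq\overline{C}$ and $C$ is open, any $x\in C_+\setminus C$ lies on $\partial C$; choosing a supporting hyperplane $H$ of $\overline{C}$ at $x$ and writing $x$ as a convex combination of points of $\overline{C}\cap V(\Q)$ forces all those points into $H$, so $H$ is rational and $x$ belongs to the rational convex hull $F_+$ of the exposed face $F=H\cap\overline{C}$. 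Hence $C_+\setminus C=\bigcup_F F_+$ over the rational exposed faces $F$; the group $G$ permutes these faces, and each $F$ spans a proper rational subspace in which its relative interior is again a strictly convex cone acted on by the stabiliser $G_F$, again ``of finite type'' (one checks that a suitable enlargement of $\Pi_0\cap F$ inside $F$ witnesses this). By the inductive hypothesis each $F_+$ is covered by $G_F$-translates of a rational polyhedral cone, and adjoining finitely many $G$-representatives of these to $\Pi_0$ gives the desired $\Pi$.

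To obtain an honest fundamental domain from $C_+=G\cdot\Pi$ I would run a Dirichlet-type construction as in \cite{Loo14}. Strict convexity of $C$ makes the dual cone $C^\vee$ full-dimensional, so there is a rational $\xi\in\Int C^\vee$ with $\langle\xi,x\rangle>0$ on $\overline{C}\setminus\{0\}$, which serves to normalise. The crucial input is that the family $\{g\Pi:g\in G\}$ is locally finite on $C_+$; granting this, only finitely many translates meet $\Pi$, and one trims $\Pi$ along the finitely many rational hyperplanes separating it from those neighbours to obtain a rational polyhedral $\Pi'\subseteq\Pi$ with $C_+=G\cdot\Pi'$ and $\Int\Pi'\cap\Int g^*\Pi'=\emptyset$ unless $g^*=\id$, i.e.\ a rational polyhedral fundamental domain.

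The main obstacle is exactly this local finiteness of $\{g\Pi\}$ near $\partial C$ inside $C_+$: $G$ need not act properly discontinuously on all of $\overline{C}$, so a priori infinitely many translates of $\Pi$ could accumulate along an irrational boundary face. Ruling this out is where one genuinely needs that $C$ is \emph{strictly} convex, that $\Pi$ is polyhedral and salient, and that $\Pi\subseteq C_+$ rather than merely $\Pi\subseteq\overline{C}$; this is the technical heart of \cite{Loo14}, and it is also what forces the rationality hypotheses into the statement.
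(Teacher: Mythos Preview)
You correctly observe at the outset that the paper does not prove this statement: it is quoted from \cite{Loo14} and used as a black box, so there is no ``paper's own proof'' to compare against. Your decision to sketch how one would prove it is reasonable, and your outline is broadly in the spirit of Looijenga's argument.

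That said, two steps in your sketch are thinner than you seem to acknowledge. First, in the inductive step for (ii)$\Rightarrow$(i) you write that ``adjoining finitely many $G$-representatives'' of the face-pieces to $\Pi_0$ yields $\Pi$; but for this you need that the rational exposed faces of $\overline{C}$ fall into only finitely many $G$-orbits, and you have not indicated why this should hold. This finiteness is in fact essentially equivalent to the local finiteness of $\{g\Pi_0\}$ on $C_+$ that you later identify as the main obstacle, so it cannot be assumed freely at this earlier stage. Second, the claim that $G_F$ acting on the relative interior of $F$ again satisfies hypothesis (ii) via ``a suitable enlargement of $\Pi_0\cap F$'' needs justification: $\Pi_0$ need not meet $F$ at all, and even when it does, $G_F$-translates of $\Pi_0\cap F$ need not cover the relative interior of $F$ without further argument.

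Your closing paragraph is the most honest part: local finiteness of the translates on $C_+$ (not merely on $C$) is indeed where the real work lies, and it is what makes the strict convexity and the rationality hypotheses essential. The Dirichlet-type construction you describe is close to what Looijenga does, though his fundamental domain is defined directly as $\{x\in C_+ : \langle \xi,x\rangle \le \langle g^*\xi,x\rangle \text{ for all } g\in G\}$ for a suitably generic rational $\xi$, rather than by ``trimming'' $\Pi$ ad hoc.
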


\subsection{The cone of curves for  surfaces with nef anticanonical divisor}

To study the structure of the effective cone of $\pr^3$ blown up at eight very general points, one crucial ingredient in the proofs of the results in Section \ref{section:effective_cone} is the understanding of the cone of curves of $\pr^2$ blown up at nine very general points.
We thank Artie Prendergast-Smith for communicating the following result to us:
\begin{prop}{\rm(\cite[Proposition 6.2]{Bor91})}\label{prop:surfaceclosedeffcone}
	Let $S$ be a complex projective smooth surface such that $-K_S$ is nef and non-torsion. Then $\NE(S)$ is a closed convex cone generated by the classes of rational curves and possibly the class $-K_S$, accumulating at most to $\R_{>0}[-K_S]$. 
	
	In particular, $\NE(S)\cap K_S^{\perp}$ is a rational polyhedral subcone of $\NE(S)$ generated by the classes of $(-2)$-curves, and possibly the class $-K_S$ if $(-K_S)^2=0$.
\end{prop}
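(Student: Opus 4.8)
The statement I need to prove is Proposition~\ref{prop:surfaceclosedeffcone}: for a smooth complex projective surface $S$ with $-K_S$ nef and non-torsion, $\NE(S)$ is a closed convex cone generated by classes of rational curves and possibly $-K_S$, accumulating at most to $\R_{>0}[-K_S]$; moreover $\NE(S)\cap K_S^\perp$ is rational polyhedral, generated by $(-2)$-curves and possibly $-K_S$ when $(-K_S)^2=0$.

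\textbf{Overall strategy.} The plan is to exploit the Cone Theorem for surfaces together with a case split on $(-K_S)^2$. First I would recall that $-K_S$ nef forces $(-K_S)^2\geq 0$ by Kleiman, and that $-K_S$ non-torsion rules out the degenerate case where $S$ is, say, minimal with $K_S\equiv 0$ in a way that collapses everything. By the Cone Theorem, $\overline{\NE}(S)=\overline{\NE}(S)_{K_S\geq 0}+\sum_j \R_{\geq 0}[C_j]$ where the $C_j$ are countably many rational curves with $0<-K_S\cdot C_j\leq 3$, and the only possible accumulation of the rays $\R_{\geq 0}[C_j]$ is along the hyperplane $K_S^\perp$. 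The heart of the matter is to show that in our situation the closure adds nothing beyond the rational-curve rays and the single ray $\R_{>0}[-K_S]$, i.e. that $\NE(S)$ is already closed.

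\textbf{Key steps in order.} (1) Handle $(-K_S)^2>0$: here $-K_S$ is big and nef, so $S$ is a weak del Pezzo-type surface; $-K_S$ lies in the interior of the effective cone and one shows $\overline{\NE}(S)$ is rational polyhedral by a standard argument (finitely many $(-1)$- and $(-2)$-curves, using that $-K_S\cdot C>0$ for all but finitely many curves bounds the curves of bounded degree; Riemann--Roch plus adjunction controls the rest). In this case $K_S^\perp\cap\NE(S)$ is spanned by $(-2)$-curves only (it cannot contain $-K_S$ since $(-K_S)^2\neq 0$), and it is rational polyhedral as a face of a rational polyhedral cone. (2) Handle $(-K_S)^2=0$ with $-K_S$ non-torsion, which is the main case (this is exactly the situation of the member $Q\subset X$ in the paper). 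Here $-K_S$ spans a boundary ray of $\Nef(S)$ lying in $K_S^\perp$. I would analyze the face $F:=\overline{\NE}(S)\cap K_S^\perp$: every irreducible curve $C$ with $K_S\cdot C=0$ satisfies $C^2<0$ (else $C^2\geq 0$, $-K_S\cdot C=0$, $-K_S$ nef would make $-K_S$ and $C$ proportional contradicting Hodge index unless... — one shows $C^2=-2$ by adjunction, so $C$ is a $(-2)$-curve), hence there are only finitely many such $C$ (their classes are roots in the negative-definite lattice $K_S^\perp$ restricted appropriately), so $F$ is the rational polyhedral cone generated by these $(-2)$-curves together with $\R_{\geq 0}[-K_S]$; this gives the "in particular" clause. (3) For closedness of the full cone $\NE(S)$: the $K_S$-negative part is rational polyhedral by the Cone Theorem (locally finite away from $K_S^\perp$), each extremal $K_S$-negative ray is generated by a rational curve, and these rays can only accumulate toward $F\subset K_S^\perp$. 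Since $F$ itself is a finitely generated cone generated by $(-2)$-curve classes and $[-K_S]$, I would argue that $\overline{\NE}(S) = F + \sum_j \R_{\geq 0}[C_j]$ and that every class in this sum is already a non-negative combination of classes of \emph{actual} curves (the $(-2)$-curves, the rational curves $C_j$) and $-K_S$; the only limit point that is genuinely needed as a generator is $[-K_S]$, and $-K_S$ is effective (or at least pseudo-effective and a non-negative limit, but here one uses that $h^0(-K_S)>0$ follows from Riemann--Roch: $\chi(-K_S)=\chi(\OO_S)+(-K_S\cdot(-K_S-K_S))/2 = \chi(\OO_S)+(-K_S)^2 = \chi(\OO_S)\geq$ something, combined with $h^2(-K_S)=h^0(2K_S)=0$ since $-K_S$ nef non-torsion). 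Thus $\NE(S)=\overline{\NE}(S)$.

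\textbf{Main obstacle.} The delicate point is the closedness assertion in the $(-K_S)^2=0$ case: a priori the $K_S$-negative extremal rays $\R_{\geq 0}[C_j]$ could accumulate onto rays in the interior of the face $F$ that are not themselves spanned by curves, which would mean a limit ray is needed as a generator but is not of the stated form. I would rule this out by showing that $F$ is spanned by the $(-2)$-curves and $[-K_S]$ (step 2) and then checking directly that any ray in $F$ lying strictly between two $(-2)$-curve rays, or between a $(-2)$-curve ray and $\R[-K_S]$, is \emph{not} a limit of the $C_j$ (because such an accumulation ray $r$ would satisfy $r\cdot(-K_S)=0$ and be a limit of effective classes of bounded $-K_S$-degree going to infinity, forcing $r$ to be proportional to a nef class in $K_S^\perp$, and the only such nef ray is $\R[-K_S]$ by Hodge index since $K_S^\perp$ is negative semidefinite with radical $\R[-K_S]$). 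That last observation — that the unique nef direction in $K_S^\perp$ is $-K_S$ — is the linchpin, and it follows from the Hodge Index Theorem applied to the signature of $K_S^\perp$ when $(-K_S)^2=0$. Once this is in hand, one concludes that the accumulation of the $C_j$ can only be toward $\R_{>0}[-K_S]$, giving both the precise shape of $\overline{\NE}(S)$ and its coincidence with $\NE(S)$ after recording that $-K_S$ is effective.
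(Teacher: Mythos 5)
The paper offers no proof of this statement: it is quoted directly from Borcea (\cite[Proposition 6.2]{Bor91}) with an attribution to Prendergast-Smith for pointing it out, so there is no in-paper argument to compare against. Judged on its own, your sketch is a correct outline of the standard proof: the Cone Theorem reduces everything to the face $F=\overline{\NE}(S)\cap K_S^{\perp}$ (which equals $\overline{\NE}(S)_{K_S\geq 0}$ precisely because $-K_S$ is nef), the Hodge Index Theorem makes $K_S^{\perp}$ negative semidefinite with radical $\R K_S$ when $(-K_S)^2=0$, a Zariski-decomposition argument identifies $F$ with the cone spanned by the finitely many $(-2)$-curves and $[-K_S]$, and the accumulation statement follows by normalizing the $K_S$-negative extremal classes. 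Three spots deserve tightening. First, your \enquote{linchpin} is phrased as \enquote{an accumulation ray must be proportional to a nef class in $K_S^{\perp}$}, but nefness of the limit is neither evident nor needed: the right observation is that the extremal rational curves $C_j$ have $C_j^2$ and $-K_S\cdot C_j$ bounded, so any limit $r$ of the normalized classes $[C_j]/\lVert C_j\rVert$ satisfies $r^2=0$ and $r\cdot K_S=0$, whence $r\in\R_{>0}[-K_S]$ by the semidefiniteness of $K_S^{\perp}$. Second, your Riemann--Roch argument for $h^0(-K_S)>0$ (needed so that the limiting ray actually lies in the non-closed cone $\NE(S)$) yields $\chi(-K_S)=\chi(\OO_S)$, which is positive only when $S$ is rational; when $\chi(\OO_S)=0$ the surface is a minimal ruled surface over an elliptic curve, where $\rho(S)=2$ and the statement is checked by hand. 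Third, the interim claim \enquote{every irreducible curve with $K_S\cdot C=0$ has $C^2<0$} is false as stated (the anticanonical elliptic curve on $\pr^2$ blown up at nine points is a counterexample); the correction you give in parentheses, namely that such a curve with $C^2\geq 0$ is forced to be proportional to $-K_S$, is the right fix. None of these issues affects the viability of the approach.
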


\begin{rem}\label{rem:surfaceclosedeff}
	In Proposition \ref{prop:surfaceclosedeffcone}, if $S$ is isomorphic to the blowup $X_{2,9}$ of $\pr^2$ at nine very general points, then the unique element in $|{-}K_S|$ is a smooth elliptic curve and there is no $(-2)$-curve on the surface $S$. Moreover, by \cite[Corollary 1]{Dol83} there is a bijection between the set of $(-1)$-curves on $S$ and the orbit ${\cal W}_{2,9}\cdot e$, where $e$ is an exceptional curve of the blowup and ${\cal W}_{2,9}$ is the Weyl group of the Dynkin diagram $T_{2,6,3}$ acting on $N^1(X_{2,9})$. Hence $\Eff(S)$ is a closed convex cone generated by the classes in ${\cal W}_{2,9}\cdot e$ and the class $-K_S$. 
\end{rem}

\subsection{The Minimal Model Program}

In this subsection we recall some definitions in the context of the Minimal Model Program (MMP) and we prove a decomposition result for the movable effective cone in the setting of generalised pairs.

We first recall the following terminology and we refer to \cite[Sections 3.7 and 6.1]{KM98} for more details.
\begin{dfn}
     Let $(X,D)$ be a projective $\Q$-factorial pair.
     A \textit{flipping contraction} (resp.\ \textit{flopping contraction}) is an extremal birational contraction $f\colon X\to Y$ to a normal variety $Y$ such that the exceptional locus of $f$ has codimension at least two in $X$ and that $-K_X$ is $f$-ample (resp.\ $K_X$ is numerically $f$-trivial). 
    If $f\colon X \to Y$ is a flopping contraction such that $-(K_X+D)$ is $f$-ample, then the $(K_X+D)$-flip of $f$ is called the \textit{$D$-flop}.
\end{dfn}

We will need the following result for smooth threefolds whose anticanonical divisor is nef.

\begin{lem}\label{lem:two_cases_D}
 Let $X$ be a smooth projective threefold with $-K_X$ nef, and let $D$ be an effective $\Q$-divisor. There exists a finite sequence of flops $\varphi\colon X \dashrightarrow X'$ such that, denoting by $D'=\varphi_*(D)$, one of the following holds:
  \begin{enumerate}[leftmargin=*]
        \item $D'$ is nef,
        \item there exists a divisorial contraction $ X' \to Y$ 
    which contracts an irreducible component of $D'$.
    \end{enumerate}
    Moreover, case {\rm (i)} occurs if and only if $D\in \MovE(X)$.
 \end{lem}

 \begin{proof}

       Since $X$ is smooth, for sufficiently small $\epsilon>0$, the pair $(X,\epsilon D)$ is klt. Assume that $D$ is not nef, then by \cite[Lemma 2.3]{Xie24} there exists a $(K_X+\epsilon D)$-negative extremal ray $\Gamma$ such that $D\cdot \Gamma<0$. 
       Let $c_\Gamma$ be the contraction of the extremal ray $\Gamma$.

    If $c_\Gamma$ is small, then $K_X\cdot\Gamma =0$, since there are no
    flipping contractions for smooth threefolds. Hence, there exists a
    $D$-flop of $c_\Gamma$. Moreover, the flopped threefold $X^+$ is again smooth
    by \cite[Theorem 2.4]{Kol89} and $-K_{X^+}$ is nef. We  replace $X$ by $X^
    +$ and $D$ by its strict transform, and repeat the argument.  
    By the termination of three-dimensional flops \cite[Corollary
    6.19]{KM98}, this procedure terminates. Therefore, after finitely many flops
    $$\varphi\colon X\dashrightarrow X',$$
    with $D'\coloneqq \varphi_* D $, either $D'$ is nef, or the contraction
    $c_{\Gamma'}\colon X' \to Y$ of a $(K_{X'}+\epsilon D')$-negative extremal ray $\Gamma'$ satisfying $D'\cdot \Gamma' < 0$ is not small. For any curve $\ell$ contracted by $c_{\Gamma'}$, we have $D'\cdot \ell<0$ and thus $\ell\subset \Supp(D')$. Consequently,  $c_{\Gamma'}$ is a divisorial contraction which contracts an irreducible component of $D'$.

   The last statement follows from \cite[Lemma 2.5]{Dru11} together with 
\cite[Chapter III, Proposition 1.14]{Nak04}.
\end{proof}

Recall that a small $\Q$-factorial modification (SQM) of a normal $\Q$-factorial projective variety $X$ is an isomorphism in codimension one $X\dashrightarrow X'$ such that $X'$ is also $\Q$-factorial. Using techniques from the MMP, Kawamata proved in \cite{Kaw97} that for a normal $\Q$-factorial terminal Calabi-Yau threefold $X$, the following movable cone decomposition result holds:
\[
\MovE(X) = \bigcup_{\substack{\varphi\colon X\dashrightarrow X'\\ \text{SQM}}} \varphi^* \big(\NefE(X') \big),
\]
where $\{ \varphi\colon X\dashrightarrow X' \}$ ranges through all small $\Q$-factorial modifications of $X$. This result actually fits into a more general setting in terms of \emph{generalised pairs}.

Generalised pairs are introduced in \cite{BH14,BZ16}: very roughly speaking, they are couples $(X,B+M)$ such that $(X,B)$ is a usual pair and $M$ is the pushforward of a nef divisor from a higher birational model of $X$. We recall the following definitions and we refer to \cite{HL22,LT22} for further discussion.

\begin{dfn}\label{dfn:g-pairs}
	A \emph{generalised pair} or \emph{g-pair} $(X/Z,B+M)$ consists of a variety $X$ equipped with  projective morphisms 
	$$  X' \overset{f}{\longrightarrow} X \longrightarrow Z , $$ 
	where $ f $ is birational and $ X' $ is normal, an effective $ \R $-divisor $B$ on $X$, and an $\R$-Cartier $\R$-divisor $M'$ on $X'$ which is nef over $Z $ such that $ f_* M' = M $ and $ K_X + B + M $ is $ \R $-Cartier. If additionally $M'$ is an NQC divisor on $ X' $, then the g-pair $(X/Z,B+M)$ is an \emph{NQC g-pair}.	
\end{dfn}

For our purpose, we will always assume that the variety $Z$ is a point, in which case we denote the g-pair by $(X,B+M)$. We now define various classes of singularities of g-pairs.
\begin{dfn}
	Let $ (X/Z,B+M) $ be a g-pair with data $ X' \overset{f}{\to} X \to Z $ and $ M' $. We may write 
	\[ K_{X'} + B' + M' \sim_\R f^* ( K_X + B + M ) \]
	for some $ \R $-divisor $ B' $ on $ X' $. Let $E$ be a divisorial valuation over $ X $.  
    By replacing $X'$ with a higher birational model, we may assume that $E$ is a prime divisor on $X'$.
    The \emph{discrepancy of $ E $} with respect to $ (X,B+M) $ is \[ a (E, X, B+M) := {-} \text{mult}_E B'.\]
	We say that the g-pair $ (X,B+M) $ is: 
	\begin{enumerate}[leftmargin=*]
		\item \emph{klt} if $a (E, X, B+M) > -1 $ for all divisorial valuations $E$ over $X$,
		\item \emph{log canonical} if $a (E, X, B+M) \geq -1 $ for all divisorial valuations $E$ over $X$.
	\end{enumerate}
    \end{dfn}
    
    Note that if $M$ is a nef $\R$-Cartier $\R$-divisor, then the g-pair $(X,B+M)$ is klt (resp.\ log canonical) if and only if the usual pair $(X,B)$ is klt (resp.\ log canonical).

The following theorem is obtained in discussions with Vladimir Lazi\'{c}.
\begin{thm}\label{thm:g-mmp-movcone}
    Let $(X,B+M)$ be a projective $\Q$-factorial klt NQC g-pair of dimension $n$ such that $K_X+B+M\equiv 0$. 
    
    Assume the existence of minimal models for smooth varieties in dimension $n$. Then 
    \[
\MovP(X)\supset \MovE(X) = \bigcup_{\substack{\varphi\colon X\dashrightarrow X'\\ \text{SQM}}} \varphi^* \big(\NefE(X') \big),
\]
where $\{ \varphi\colon X\dashrightarrow X' \}$ ranges through all small $\Q$-factorial modifications of $X$.
The conclusion holds unconditionally if:
    \begin{enumerate}[leftmargin=*]
        \item $n\leq 4$, or
        \item $n=5$ and $X$ is uniruled.\footnote{By \cite[Lemma 3.18]{LMPTX}, $X$ is not uniruled if and only if $X$ is canonical, $B=0$ and $K_X\sim_{\Q} 0$.}
    \end{enumerate}
\end{thm}
\begin{proof}
{\em Step 1.}
Since the pullback of an effective nef divisor by an isomorphism in codimension one is an effective movable divisor, we obtain \[ \bigcup_{\substack{\varphi\colon X\dashrightarrow X'\\ \text{SQM}}}\varphi^* \big(\NefE(X') \big) \subset \MovE(X).\]

\smallskip

{\em Step 2.}
In this step
we show the following:
\begin{equation}\label{eq:aux1}
    \MovE(X) \subset \bigcup_{\substack{\varphi\colon X\dashrightarrow X'\\ \text{SQM}}} \varphi^* \big(\NefE(X') \big)
\end{equation}
and
\begin{equation}\label{eq:aux2}
    \MovE(X) \subset \MovP(X).
\end{equation}

    Let $D\in\MovE(X) $. By replacing $D$ with $\epsilon D$ for small enough $\epsilon >0$, we have that $\big(X,(B+D)+M\big)$ is an NQC log canonical generalised pair, and
    \begin{equation}\label{eq:g-pair}
        D\equiv K_X+(B+D)+M
    \end{equation}
as $K_X+B+M\equiv 0$. By \cite[Theorem 1.2 and Corollary 1.3]{LT22}, we may run a $\big(K_X+(B+D)+M\big)$-MMP with scaling of an ample divisor
    \[
    \varphi\colon \big(X,(B+D)+ M\big) \dashrightarrow \big(X',(B'+D')+ M'\big),
    \]
    where $B' = \varphi_*B, D'=\varphi_*D$, and the divisors $M$ and $M'$ are pushforwards of the same nef $\R$-divisors on a common birational model of $X$ and $X'$. Since $D$ is movable and by (\ref{eq:g-pair}), we infer that this MMP does not contract any divisor. Moreover, $\big(X',(B'+D')+ M'\big)$ is a $\Q$-factorial NQC log canonical generalised pair and together with (\ref{eq:g-pair}), we obtain that $D'$ is nef. 
    
    Note that $D'$ is also an effective divisor, since $D$ is effective and $\varphi$ is an isomorphism in codimension one. Thus we have $D=\varphi^* D'\in\varphi^*\big(\NefE(X')\big )$, which proves (\ref{eq:aux1}).

Applying \cite[Proposition 3.20]{HL22} to $K_{X'}+(B'+D')+ M'$, we have $D'\in\NefP(X')$. Thus we obtain $D=\varphi^* D'\in\MovP(X)$, which proves (\ref{eq:aux2}).

\smallskip
{\em Step 3.}
For $n\leq 4$, or for $n=5$ and $X$ is uniruled, we repeat verbatim {\em Step 2}, except that we invoke \cite[Corollary 1.4]{LT22} instead of \cite[Theorem 1.2 and Corollary 1.3]{LT22}.
\end{proof}

We point out here a crucial application of Theorem \ref{thm:g-mmp-movcone}: when $(X,B)$ is a projective $\Q$-factorial klt pair such that $-(K_X+B)$ is nef, one can take $M=-(K_X+B)$. Then $M$ is an NQC divisor on $X$ by \cite[Corollary 3.6]{HL21}, and $(X,B+M)$ is an NQC klt generalised pair. In particular, we obtain:
\begin{proof}[Proof of Proposition \ref{prop:main}]
    Setting $B = 0$ and $M=-K_X$, we see that $(X,B+M)$ is an NQC generalised pair. We apply Theorem \ref{thm:g-mmp-movcone} to the NQC generalised pair $\big(X,0+(-K_X)\big)$, taking into account that every $\varphi\colon X\dashrightarrow X'$ appearing in the union is a finite sequence of flops by the first two paragraphs of the proof of Lemma \ref{lem:two_cases_D}.
\end{proof}

\section{Blowup of $\pr^3$ at eight points}\label{sec:blowup}
Let $p_1,\dots,p_8$ be eight very general points in $\pr^3$ and let $\pi\colon X\to \pr^3$ be the blowup of $\pr^3$ at the points $p_1,\dots,p_8$. 

\medskip

From this point on, we use the following notation:
\begin{itemize}[leftmargin=*]
    \item $E_i\subset X$ is the exceptional divisor over $p_i\in\pr^3$, for $i=1,\dots,8$;

    \item $H\in \Pic(X)$ is the pullback $\pi^*\OO_{\pr^3}(1)$;

    \item $e_i$ is the class of line in $E_i$, for $i=1,\dots,8$;

    \item $h = H^2$ is the class of the transform of a general line in $\pr^3$;

    \item $\ell_{ij}$ is the class of the transform of the line through the two points $p_i$ and $p_j$, for $1\leq i<j \leq 8$.
\end{itemize}

We have $N^1(X) = \Z H \oplus \Z E_1 \oplus \dots \oplus \Z E_8$ and $ N_1(X) = \Z h \oplus \Z e_1 \oplus \dots \oplus \Z e_8$, and we will call these generators the \textit{standard bases} for $N^1(X)$ and $N_1(X)$, respectively. 

\begin{notation}
Let $d,m_1,\ldots,m_8 \in \R$. We denote by $(d;m_1,\dots,m_8)$ the class 
\[dH - \sum_{i=1}^8 m_i E_i \in N^1(X)_{\R}.\]
\end{notation}

In the standard basis, the anticanonical divisor class $-K_X$ is given by
\[
-K_X = 4H - 2\sum_{i=1}^8 E_i = (4;2,\ldots,2) = (4;2^8).
\]

As shown in \cite[Section 2]{LesOtt16}, $-K_X$ is nef and not semiample with $(-K_X)^3=0$. Moreover, a general member $Q$ in the linear system $|{-}\frac{1}{2}K_X|$ is smooth and isomorphic to $\pr^2$ blown up at nine very general points. Two general members in $|{-}\frac{1}{2}K_X|$ intersect transversally along a smooth elliptic curve which is the unique member in $|{-}K_Q|$.
Furthermore, a general member $\Delta\in|{-}K_X|$ is reducible and $(X,\Delta)$ forms a log canonical (or more precisely, a dlt) Calabi-Yau pair.

\subsection{The Weyl group}\label{subsec:weylgroup}
In this subsection, we introduce a Weyl group acting on $N^1(X)_{\R}$. For more detail we refer to \cite{Dol83} and \cite{Mukai04}. 
Consider the following symmetric bilinear form $(\cdot,\cdot)$ on $N^1(X)_\R$:
\begin{equation}\label{eq:pairing}
(H,H) = 2, \ (E_i,E_j)= -\delta_{i,j}, \ (H,E_j)= 0 
\end{equation}
for $ i,j \in\{1,\ldots, 8\}$. Note that by \cite[Theorem 2.10(1)]{DP19}, the form $(\cdot,\cdot)$ is preserved under $\cal W$.
\begin{lem}[\cite{Mukai04}]\label{lem:roots}
	The classes $\alpha_0,\ldots,\alpha_7, E_8$, where 
	\begin{align*}
	\alpha_0 &= H- E_1-\ldots-E_4, \\
	\alpha_1 &= E_1-E_2, \ldots, \alpha_7 = E_7-E_8,
	\end{align*}
	form another basis of $N^1(X)$. Moreover, $\alpha_0,\ldots,\alpha_7$ is a $\Z$-basis of the orthogonal complement $K_X^{\bot}$ of $K_X$ under $(\cdot,\cdot)$ and a system of simple roots of an affine root system with Dynkin diagram $T_{2,4,4}$. 
		\end{lem}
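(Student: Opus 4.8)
The plan is to verify the claim by a direct computation with the standard pairing (\ref{eq:pairing}), together with recognition of the resulting Cartan/Gram matrix as that of $T_{2,4,4}$. First I would check that $\alpha_0,\dots,\alpha_7,E_8$ is a $\Z$-basis of $N^1(X)$: writing the change-of-basis matrix from the standard basis $H,E_1,\dots,E_8$ to $\alpha_0,\dots,\alpha_7,E_8$, one sees it is upper/lower triangular (up to reordering) with $\pm 1$ on the diagonal, hence unimodular; equivalently, one inverts the relations explicitly, expressing $E_8 = E_8$, $E_7 = \alpha_7 + E_8$, $\dots$, $E_1 = \alpha_1 + \dots + \alpha_7 + E_8$, and then $H = \alpha_0 + E_1 + E_2 + E_3 + E_4$, which shows the new classes generate.

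Next I would compute the Gram matrix of $(\cdot,\cdot)$ on $\alpha_0,\dots,\alpha_7$. Using (\ref{eq:pairing}) one gets $(\alpha_i,\alpha_i) = -2$ for $i=1,\dots,7$ (each $\alpha_i = E_i - E_{i+1}$ has self-intersection $-1-1 = -2$) and $(\alpha_0,\alpha_0) = (H,H) + \sum_{j=1}^4 (E_j,E_j) = 2 - 4 = -2$; and the off-diagonal entries are $(\alpha_i,\alpha_{i+1}) = 1$ for consecutive indices in the $A_7$-chain $\alpha_1,\dots,\alpha_7$, with $(\alpha_i,\alpha_j) = 0$ for non-adjacent $i,j$ in that chain, while $\alpha_0$ meets only $\alpha_4$ (since $\alpha_0$ involves $E_1,\dots,E_4$ and $\alpha_4 = E_4 - E_5$), with $(\alpha_0,\alpha_4) = (E_4,E_4)\cdot(-1)\cdot(-1)$-type bookkeeping giving $1$, and $(\alpha_0,\alpha_i) = 0$ otherwise. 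Rescaling by $-1$ to the conventional positive form, this is precisely the Cartan matrix of the graph with a central node (here $\alpha_4$) from which emanate three legs of lengths $2,4,4$: one leg $\alpha_4 - \alpha_0$ (length $2$), one leg $\alpha_4 - \alpha_3 - \alpha_2 - \alpha_1$ (length $4$), one leg $\alpha_4 - \alpha_5 - \alpha_6 - \alpha_7$ (length $4$). That is the Dynkin diagram $T_{2,4,4}$, and since $\tfrac1{2} + \tfrac1{4} + \tfrac1{4} = 1$ the associated root system is affine (of type $\widetilde{E_7}$), consistent with $\mathcal W$ being infinite.

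It remains to identify $K_X^{\perp}$ and to check $\alpha_0,\dots,\alpha_7$ span it. Since $-K_X = 4H - 2\sum E_i$, for any $v = aH - \sum b_i E_i$ one computes $(v, K_X) = -\big(8a - 2\sum b_i\big) = -2(4a - \sum b_i)$, so $K_X^{\perp}$ is the kernel of the surjection $N^1(X)\to\Z$, $v\mapsto 4a - \sum b_i$; this kernel is a rank-$8$ primitive sublattice (the coefficient vector $(4,-1,\dots,-1)$ is not divisible, after clearing the sign the functional is $v\mapsto 4a-\sum b_i$ which is visibly surjective onto $\Z$ as $E_1\mapsto 1$), so $K_X^{\perp}$ is free of rank $8$. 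Each $\alpha_i$ lies in $K_X^{\perp}$: for $i\geq 1$, $\alpha_i = E_i - E_{i+1}$ gives $4\cdot0 - (-1+1) = 0$; for $\alpha_0 = H - E_1-\dots-E_4$ one gets $4\cdot 1 - 4 = 0$. The $\alpha_i$ are linearly independent (their Gram matrix, being the $T_{2,4,4}$ affine Cartan matrix up to sign, is degenerate but the $\alpha_i$ themselves are independent as vectors — independence follows already from them being part of a $\Z$-basis of $N^1(X)$), hence they span a finite-index sublattice of the rank-$8$ lattice $K_X^{\perp}$; primitivity of each relation, or a determinant computation on the $8\times 9$ coordinate matrix showing its $8\times 8$ minors have gcd $1$, upgrades this to equality $\bigoplus \Z\alpha_i = K_X^{\perp}$. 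The main obstacle is this last unimodularity bookkeeping — getting the index exactly $1$ rather than merely finite — which I would handle by the explicit triangular change-of-basis already used in the first paragraph (it simultaneously expresses $H, E_1, \dots, E_7$ in terms of $\alpha_0, \dots, \alpha_7, E_8$ with unit determinant, so $\{\alpha_0,\dots,\alpha_7,E_8\}$ is a $\Z$-basis and consequently $\{\alpha_0,\dots,\alpha_7\}$ is a $\Z$-basis of its saturation, which is $K_X^{\perp}$).
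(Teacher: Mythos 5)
Your verification is correct. The paper itself gives no proof of this lemma --- it is quoted from \cite{Mukai04} --- so there is nothing to compare against except the computation itself: the Gram matrix entries $(\alpha_i,\alpha_i)=-2$, $(\alpha_i,\alpha_{i+1})=1$, $(\alpha_0,\alpha_4)=1$ and all other off-diagonal pairings zero are right, giving (after negation) the affine Cartan matrix of $T_{2,4,4}=\widetilde{E}_7$, and the triangular change of basis $E_i=\alpha_i+\cdots+\alpha_7+E_8$, $H=\alpha_0+E_1+\cdots+E_4$ settles both the basis claim and the only delicate point, namely that $\bigoplus\Z\alpha_i$ is saturated and hence equals the rank-$8$ primitive lattice $K_X^{\perp}=\ker\bigl(v\mapsto 4a-\sum b_i\bigr)$ rather than a finite-index sublattice.
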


    We refer to \cite[Chapitre VI, \S 1 and \S 2]{Bou08} for basic properties of affine root systems and Weyl groups. We also point out that the Dynkin diagram $T_{2,4,4}$ is precisely the affine Dynkin diagram $\widetilde{E}_7$ in \cite[Chapitre VI, \S 4, Théorème 4]{Bou08}.

  Consider the real vector space $N^1(X)_{\R}$. Then $\{\alpha_0,\dots,\alpha_7\}$ is a set of linearly independent vectors in $N^1(X)_{\R}$.
   Let ${\cal W}$ be the Weyl group generated by the orthogonal reflections with respect to $\alpha_0,\ldots,\alpha_7$, that means, 
   \begin{equation}\label{eq:weylgroup}
    {\cal W} = \langle s_0,\ldots,s_7 \rangle,
   \end{equation}
    where $s_i(D) = D + (\alpha_i,D)\cdot \alpha_i$ for any $D\in N^1(X)_{\R}$.

The \textit{fundamental Weyl chamber} is defined as 
    \[
 C = \{ D \in N^1(X)_{\R} \mid (D,\alpha_i) > 0  \text{ for all } i\in\{0,\dots,7\} \},
 \]
  where the $\alpha_i$ are the simple roots of Lemma \ref{lem:roots} and $(\cdot,\cdot)$ the symmetric bilinear form from \eqref{eq:pairing}. 
  The closure $\overline{C}$ of $C$ is again a convex cone and characterized in the following way.
  Given any divisor class $D = dH - \sum_{i=1}^{8} m_iE_i$ in $N^1(X)_{\R}$, we have:
  \begin{align}\label{eq:fundchamber}
  D \in \overline{C} \quad  & \text{ if and only if }  (D,\alpha_i) \geq 0  \text{ for all } i\in\{0,\dots,7\} \nonumber \\ 
 &  \text{ if and only if } 2d \geq \sum_{i=1}^{4} m_i \text{ and } m_i \geq m_{i+1} \text{ for any }i\in\{1,\dots,7\}.
  \end{align}

\begin{dfn}(\cite{DeVolLaf07})
We say that a divisor class 
\[
D = (d;m_1,\ldots,m_8) = dH - \sum_{i=1}^{8} m_iE_i \in N^1(X)_{\R}
\]
 is in \textit{standard form} if $D \in \overline{C}$, that means $m_1 \geq m_2 \geq \ldots \geq m_8$ and $2d \geq \sum_{i=1}^{4} m_i$.
  \end{dfn}
 Note that $(K_X,\alpha_i) = 0$ for all $i\in\{0,\dots,7\}$, hence any $\R$-multiple of $K_X$ is in $\overline{C}$. The set $\overline{C}$ is thus determined modulo $K_X$ by the \textit{fundamental weights} $f_0,\ldots,f_7$ satisfying
 \[
 (f_i,\alpha_j) = \delta_{i,j}.
 \]
 More precisely, setting
 \begin{enumerate}[leftmargin=*]
 \item $f_0 = \frac{1}{2} H$,
 \item $f_1 = \frac{1}{2} H  - E_1$,
 \item $f_2 = H  - E_1 -E_2$,
  \item $f_3 = \frac{3}{2}H  - E_1 -E_2-E_3$,
  \item $f_4 = 2H  - \sum_{i=1}^4 E_i$,
   \item $f_5 = 2H - \sum_{i=1}^5 E_i$,
  \item $f_6 = 2H-\sum_{i=1}^6 E_i$,
   \item $f_7 = 2H - \sum_{i=1}^7 E_i$,
 \end{enumerate}
then any element in $\overline{C}$ is of the form 
 \[
 \sum_{i} \lambda_i f_i + \mu K_X
 \]
 with $\lambda_i \in \R_{\geq 0}$ and $\mu \in \R$.
 
 The union 
 \[
 T = \bigcup_{w \in {\cal W}} w\cdot\overline{C}
 \]
 is called the \textit{Tits cone}. It is a convex cone in $N^1(X)_{\R}$; see for example \cite[Chapitre V, \S 4, Proposition 6]{Bou08}.

    Now we analyse the action of the generators of ${\cal W}$ on the standard basis $\{H,E_1,\ldots,E_8\}$ of $N^1(X)_{\R}$. It is easy to see that for each $i\in\{1,\ldots,8\}$, the simple reflection $s_i$ permutes the elements $E_i$ and $E_{i+1}$ and leaves $E_j$ for $j \neq i,i+1$ and $H$ invariant. Moreover, for the reflection $s_0$ we obtain
  \begin{enumerate}[leftmargin=*]
  	\item $s_0(H) = 3H - 2\sum_{i=1}^{4} E_i$, 
  	\item $s_0(E_j) = H - \sum_{i=1,i\neq j}^{4} E_i$, for $j=1,\ldots,4$,
  	\item $s_0(E_j) = E_j$, else.
  \end{enumerate}
In the next subsection we will see that the simple reflection $s_0$ is exactly the automorphism $\phi_{\{1,\ldots,4\}}\colon N^1(X) \rightarrow N^1(X)$ induced by the standard Cremona transformation centred at the first four points $p_1,\ldots,p_4$. More generally, given an element $w\in{\cal W}$, there exists a corresponding isomorphism in codimension one for $X$:
\begin{thm}[\cite{Mukai04}]\label{thm:weylelem}
	For each element $w\colon N^1(X) \rightarrow N^1(X)$ of the Weyl group ${\cal W}$ there exists an isomorphism in codimension one  $\varphi_w\colon X \dashrightarrow X_w$, where $X_w$ is a blowup of $\pr^3$ at 8 very general points such thatthe pullback of the standard basis of $N^1(X_w)$ coincides with the transformation of that of $N^1(X)$ by $w$.
\end{thm}

\begin{rem}\label{rem:weylaction}
Let ${\cal V} = \big((\pr^3)^8\big)/\rm{PGL}(4)$, and consider $\textbf{p} = (p_1,\ldots,p_8) \in {\cal V}$.  Then for an element $w \in {\cal W}$, the variety $X_w$ from Theorem \ref{thm:weylelem} is the blowup of $\pr^3$ at $\textbf{q} = {\rm{cr}}_{3,8}(w)(\textbf{p})$, where 
\[
{\rm{cr}}_{3,8}\colon {\cal W} \rightarrow \rm{Bir}({\cal V})
\]
is the so called \textit{Coble representation} of ${\cal W}$ (see \cite{Mukai04}). 
Then, using \cite[Lemma 2.2]{lesieutre2014diminished},  there exists a set $U \subset {\cal V}$, the complement of a countable union of subvarieties of $ {\cal V}$ such that if $\textbf{p} \in U$, then for $X$ being the blowup of $\pr^3$ at $\textbf{p}$, the action of ${\cal W}$ on $N^1(X)_{\R}$ preserves the cones $\Eff(X)$ and $\MovC(X)$, respectively. 
\end{rem}

\subsection{The standard Cremona transformations}\label{subsec:cremona}

In this subsection, we describe the action of a standard Cremona transformation of $\pr^3$ centred at four points. Our presentation closely follows \cite[Section 3]{LesOtt16}, see also \cite[Section 3.1]{DimMir26} for more detail.

The standard Cremona transformation of $\pr^3$ (centred at the four coordinate points) is defined as
\[
\Cr \colon \pr^3 \dashrightarrow \pr^3, \quad [x_0:x_1:x_2:x_3] \mapsto \left[ \frac{1}{x_0}:\frac{1}{x_1}:\frac{1}{x_2}:\frac{1}{x_3}\right].
\]
Geometrically, $\Cr$ blows down the four planes, each of which passes through three of the coordinate points. Now let $\pi\colon Y\to \pr^3$ be the blowup of the four coordinate points. Then $\Cr$ can be lifted to a birational transformation $\overline{\Cr}\coloneqq \pi^{-1} \circ \Cr{} \circ \pi$ of $Y$, which does not contract any divisor and whose indeterminacy locus consists of the strict transforms of the six lines through pairs of the coordinate points.

Using the notation from Remark \ref{rem:weylaction}, we first consider $X$ as the blowup of $\pr^3$ at $\textbf{p} \in \mathcal{V}$, where $p_1,\ldots,p_4$ are the four coordinate points. Then we can embed $N^1(Y)$ into $N^1(X)$ with the complement spanned by $E_5,\ldots,E_8$. Thus, we can extend the isomorphism $\overline{\Cr}^*\colon N^1(Y)\to N^1(Y)$ to an isomorphism $N^1(X) \rightarrow N^1(X)$ by letting it act as the identity on this complement.

Now if $X$ is the blowup of very general $\textbf{p} = (p_1,\dots,p_8)\in \mathcal{V}$, then any choice of four of the eight points can be taken as the central points of a corresponding Cremona transformation (after a suitable linear transformation).
For any subset $I\subset\{1,\dots,8\}$ with $|I|=4$, we consider the Cremona transformation centred at $\{p_i\mid i\in I\}$. Then the induced action $\phi_I$ on $N^1(X)$ can be explicitly described as follows.
\begin{prop}{\rm (\cite[Proposition 3.1]{DimMir26}, \cite[Section 3.3]{BDP16})}\label{prop:crondivisors}
	
    Fix any subset $I\subset \{1,\dots,8\}$ with $|I|=4$. 
    \begin{enumerate}[leftmargin=*]
    \item The action of $\phi_I$ on $N^1(X)$ is given by sending $dH -\sum_{i=1}^8 m_i E_i $ to $d'H-\sum_{i=1}^8 m'_i E_i$, where
    \begin{align*}
        d' &= 3d - \sum_{j\in I} m_j = d + t,\\
        m'_i &= 2d - \sum_{j\in I, j\neq i} m_j= m_i+t \text{ for } i\in I \text{ and }
        m'_i = m_i \text{ for }i\not\in I,
    \end{align*}
    and $t\coloneqq 2d - \sum_{i\in I}m_i$.
    In particular, we have $\phi_I^2 = \Id$ and
     \begin{align*}
    \phi_I(H) &= 3H - 2\sum_{i\in I} E_i,\\
    \phi_I(E_i) &= H - \sum_{j\in I, j\neq i} E_j \text{ for } i\in I \text{ and } \phi_I(E_i)=E_i \text{ for } i\not\in I.
    \end{align*}
\item Let $D\subset X$ be an effective Cartier divisor. Then
\[
h^0\big ( X,\OO_X(D)\big) = h^0\big(X,\OO_X(\phi_I(D))\big).
\]
\end{enumerate}
\end{prop}
Consider an effective Cartier divisor $D\subset X$. Then every irreducible component of $D$ is either one of the exceptional divisors $E_i$, or the strict transform of a hypersurface in $\pr^3$. Hence, the divisor class of $D$ can be written as 
\[(d;m_1,\ldots,m_8) \text{ with } d\in\Z_{\geq 0} \text{ and } m_i\in\Z.\]
We call $d$ the \textit{degree} of $D$.  With notation from Proposition \ref{prop:crondivisors}, if $t=2d - \sum_{i\in I}m_i<0$ for some subset $I\subset\{1,\dots,8\}$ with $|I|=4$, then the degree of $D$ can be lowered by $\phi_I$. Moreover, by Proposition \ref{prop:crondivisors}(ii), the divisor $\phi_I(D)$ is again effective, and thus, its degree remains non-negative.

\subsection{Linear systems in standard form}
In this subsection we introduce the notion a linear system in standard form. The existence of this standard form (see Remark \ref{rem:standform}) together with the decomposition of a linear system in standard form (see Proposition \ref{prop:standform}) are some of the main tools to prove Theorems \ref{main:eff} and \ref{main:mov}.
\begin{dfn}(\cite{DeVolLaf07})
Fix $d\in \Z_{\geq 0}$ and $m_i\in\Z$ for $i\in\{1,\dots,8\}$. Let us denote by ${\cal L}(d;m_1,\dots,m_8)$ the linear system $|dH-\sum_{i=1}^8 m_i E_i|$. We say that ${\cal L}(d;m_1,\dots,m_8)$ is in \textit{standard form} if the class $(d;m_1,\ldots,m_8)$ is in standard form.
\end{dfn}

\begin{rem}\label{rem:standform}

Consider the class $D$ of an effective Cartier divisor in $X$. Then by permuting the order of the points and applying Proposition \ref{prop:crondivisors} iteratively (see \cite[Proposition 3.2]{LU10}), 
there exists an element $w \in \cal W$ and a class $(d;m_1,\ldots,m_8)$ in standard form  with $d \in \Z_{\geq 0}$ and $m_i \in \Z$ such that 
\[w\cdot D =(d;m_1,\ldots,m_8).\]
Moreover, by Proposition \ref{prop:crondivisors}(ii) we obtain \[\dim |D|=\dim {\cal L}(d;m_1,\ldots,m_8).\]
\end{rem}

\begin{notation}
	For a divisor class $(d;m_1,\ldots,m_8)$ (resp.\ a linear system ${\cal L}(d;m_1,\ldots,m_8)$) in standard form, we usually  omit those $m_i$ which are zero. For example, 
	we write 
	${\cal L}(1;1^2)$ for the system $|H-E_1-E_2|$.
\end{notation}

\begin{prop}{\rm(\cite[Proposition 2.2]{DeVolLaf07})}\label{prop:standform} Let $(d;m_1,\ldots,m_8)$ be a Cartier divisor class in standard form with $m_8 \geq 0$. Then 
\begin{equation}\label{eq:standform}
(d;m_1,\ldots,m_8)	= D'
+ \sum_{i=4}^{a-1} (m_i - m_{i+1})\cdot (2;1^i) + m_a \cdot (2;1^a), \end{equation}
	where $D' = 
	(d-2m_4;m_1-m_4,m_2-m_4,m_3-m_4)$ 
	and
	 $a = \max \{i \mid m_i \neq 0\}$.
	\end{prop}

\begin{rem}
	Note that every divisor class in \eqref{eq:standform} is in standard form and all the coefficients are non-negative. The divisor class $D'$ is effective if and only  if $d-2m_4 \geq m_1-m_4$ (see \cite[Lemma 5.1]{DeVolLaf07}). 
	\end{rem}
Writing a linear system in standard form allows us to determine its base locus, see \cite[Theorem 6.2]{DeVolLaf06}. For our purpose, we only need the following result, see \cite[Theorem 6.2]{DeVolLaf06} and \cite[Corollary 4.4]{BDP16}.
\begin{prop}\label{prop:baselocusstandardsystem}
	Let ${\cal L}(d;m_1,\dots,m_8)$ be a linear system in standard form. Assume that $m_8\geq 0$.
 Then ${\cal L}(d;m_1,\dots,m_8)$ has no fixed divisor.
\end{prop}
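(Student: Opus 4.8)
The plan is to prove Proposition \ref{prop:baselocusstandardsystem} by induction on the degree $d$, using the decomposition \eqref{eq:standform} from Proposition \ref{prop:standform} to reduce an arbitrary standard form linear system to a small list of base cases. First I would observe that a fixed divisor, if it exists, is the strict transform of a hypersurface in $\pr^3$ or one of the exceptional divisors $E_i$; since the class $(d;m_1,\dots,m_8)$ is in standard form with $m_8\geq 0$, writing the effective decomposition \eqref{eq:standform} expresses the class as a sum $D' + \sum_{i=4}^{a-1}(m_i-m_{i+1})(2;1^i) + m_a(2;1^a)$ with $D'=(d-2m_4;m_1-m_4,m_2-m_4,m_3-m_4)$. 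The key point is that a fixed component of $|D|$ would have to be a common fixed component of each summand appearing with positive coefficient, so it suffices to show that each of the constituent systems $(2;1^i)$ for $4\leq i\leq 8$, as well as the systems $D'$ that arise (which only involve at most three points, i.e.\ are of the form $(d';m_1',m_2',m_3')$), has no fixed divisor.

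The next step is to dispatch the basic systems. For the systems $\mathcal L(2;1^i)$ with $i\leq 8$ — quadrics through $i$ general points — one checks directly that these are basepoint-free or at least have base locus of codimension $\geq 2$: a general quadric through up to eight general points in $\pr^3$ is smooth and irreducible, so $|2H-E_1-\dots-E_i|$ has no fixed divisor (this is classical, and also follows from the fact that the generic such quadric is irreducible). For the three-point systems $(d';m_1',m_2',m_3')$ with $d'\geq m_1'\geq m_2'\geq m_3'\geq 0$ and $2d'\geq m_1'+m_2'+m_3'$ (note the fourth coordinate is zero, so the standard-form inequality involving $m_4$ is automatically the one we need, and effectivity gives $d'-2m_4'+\dots$, i.e.\ $d'\geq m_1'$), one argues that the linear system of hypersurfaces of degree $d'$ through three general points with the prescribed multiplicities has no fixed component: this can be seen by a dimension count, or by noting that three general points impose independent conditions and the general member is irreducible for the numerical range allowed by standard form. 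I would cite \cite[Theorem 6.2]{DeVolLaf06} and \cite[Corollary 4.4]{BDP16} for the precise base locus computation and extract exactly the no-fixed-divisor statement.

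The final step is to assemble the induction: if $d-2m_4 < 0$ is impossible by the standard form hypothesis $2d\geq\sum_{i=1}^4 m_i \geq 4m_4$, wait — more carefully, standard form gives $2d \geq m_1+m_2+m_3+m_4$, so $d - 2m_4 \geq \tfrac12(m_1+m_2+m_3) - \tfrac32 m_4$, which together with $m_1,m_2,m_3\geq m_4$ gives $d-2m_4\geq 0$; moreover $D'$ is again in standard form and has strictly smaller degree whenever $m_4>0$, so induction applies to $D'$ while the remaining summands are among the basic systems already handled. When $m_4=0$ the decomposition degenerates to $D'=(d;m_1,m_2,m_3)$ plus no quadric terms, which is the three-point base case. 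Since $|D|$ contains $D'$ plus the other summands and none of these pieces has a fixed divisor, and the general member is a sum of general members of each piece, $|D|$ itself has no fixed divisor.

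The main obstacle I anticipate is the bookkeeping around \emph{irreducibility} of the general member versus mere absence of a fixed component: a sum of systems with no fixed component has no fixed component, but one must be careful that the $D'$ piece is genuinely effective (which is where the inequality $d-2m_4\geq m_1-m_4$, i.e.\ $d\geq m_1$, enters — and this follows from standard form: $2d\geq m_1+m_2+m_3+m_4\geq 4m_4$ is not quite enough, so one needs $2d\geq m_1+m_2+m_3+m_4$ together with $m_2,m_3,m_4\leq m_1$ giving $2d\geq m_1 + 3\cdot 0$... actually $d\geq m_1$ may fail, e.g.\ $(2;2,2)$ is excluded since $2\cdot2 = 4 = 2+2$ is allowed but then $d=2=m_1$; a genuinely problematic case like $(2;3,\dots)$ violates standard form). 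Handling the edge cases where $D'$ fails to be effective — where one must instead absorb terms differently or invoke the classification in \cite[Theorem 6.2]{DeVolLaf06} directly — is the delicate part, and I would lean on the cited DeVolder–Laface and Brambilla–Dumitrescu–Postinghel results to cover those configurations rather than re-deriving the full base locus analysis.
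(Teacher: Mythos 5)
Your overall strategy---decompose via Proposition \ref{prop:standform} and use that a sum of nonempty linear systems without fixed components has no fixed component---is the natural one, but it has a genuine gap exactly where you suspect, and your proposed fallback does not close it. The decomposition \eqref{eq:standform} only produces a sum of \emph{effective} pieces when $D'$ is effective, i.e.\ when $d\geq m_1+m_4$, and standard form with $m_8\geq 0$ does \emph{not} guarantee this. For example $(3;3,1,1,1,1,1,1,1)$ is in standard form ($6\geq 3+1+1+1$) but $d=3<m_1+m_4=4$, so $D'=(1;2)$ is not effective and the induction never gets off the ground. Deferring this case to \cite[Theorem 6.2]{DeVolLaf06} and \cite[Corollary 4.4]{BDP16} is circular as a proof of Proposition \ref{prop:baselocusstandardsystem}, since that is precisely the statement the paper imports from those references (the paper gives no proof of its own and simply cites them). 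The missing idea, which the paper deploys in Step 2 of Theorem \ref{thm:effcone} and of Lemma \ref{lem:standardinPi}, is that when $t_{1,4}=m_1+m_4-d>0$ one subtracts the class $(3;3,1^{a-1})$---the system of cones with vertex $p_1$ over plane cubics through at most seven general points, which is nonempty with no fixed component---and checks that the result is again in standard form with the stated positivity of the coefficients and with $t_{1,4}$ decreased by one; iterating reduces to the case $d\geq m_1+m_4$, where your decomposition does apply.

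Two smaller points. First, a fixed component of $|A+B|$ need only be a fixed component of \emph{one} of $|A|$, $|B|$, not of each; the conclusion you draw (it suffices that every summand be nonempty and fixed-component-free) is nevertheless correct. Second, your treatment of the three-point systems $(d';m_1',m_2',m_3')$ is too optimistic: the general member need not be irreducible (e.g.\ every member of $(2;2,2)$ is a pair of planes through the line $p_1p_2$), so ``the general member is irreducible'' cannot be the argument, and a bare dimension count does not by itself exclude a fixed component. The clean argument is the one in Step 1 of Lemma \ref{lem:standardinPi}: a class with $2d'\geq m_1'+m_2'+m_3'$ and $d'\geq m_1'\geq m_2'\geq m_3'\geq 0$ is a non-negative integer combination of $H$, $H-E_i$ and $H-E_i-E_j$, each of which moves in a linear system with no fixed component.
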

\begin{rem}\label{rem:standardmov}
Consider an effective Cartier divisor class $D$ such that $\Bs |D|$ has codimension at least $2$. By Remark \ref{rem:standform} there exists an element $w\in {\cal W}$ such that $w\cdot D = (d;m_1,\ldots,m_8)$ is in standard form, and we have $d\geq m_i$ for all $i\in\{1,\dots,8\}$ as $w\cdot D$ is again effective. 
Now assume that the class $(d;m_1;\ldots,m_8)$ has a negative coefficient $m_k$ for some $k\in\{1,\dots,8\}$, then the linear system ${\cal L}(d;m_1,\ldots,m_8)$ contains $E_k$ with multiplicity $-m_k$ in its base locus. But if this is the case, then there exists a fixed component with multiplicity $-m_k$ in the base locus of the starting linear system $|D|$; it is the image of $E_k$ by $w^{-1}$. This gives a contradiction.

In particular, if we start with a Cartier divisor class $D\in \Mov(X)$ in Remark \ref{rem:standform}, then there exists an element $w\in \cal W$ such that the class $ w\cdot D  = (d;m_1,\dots,m_8)$ is in standard form with $d\geq m_i\geq 0$ for all $i\in\{1,\dots,8\}$.
\end{rem}

\section{The cone of curves and the nef cone} Let $X$ be $\pr^3$ blown up at eight very general points $p_1,\ldots,p_8$. We use the notation introduced at the beginning of Section \ref{sec:blowup}.
In this section, we recall some results from \cite{CLO16}, which allow us to determine a set of generators for the cone of $\Nef(X)$ and to prove Theorem \ref{main:nef}.

Consider an irreducible curve $C$ in $X$. If $C$ is contained in some exceptional divisor $E_i$ for $i\in\{1,\dots,8\}$, then the class of $C$ is proportional to $e_i$. If $C$ is not contained in any exceptional divisor, then $C$ is the strict transform of a curve in $\mathbb{P}^3$ with positive degree $a$ and non-negative multiplicity $b_i\leq a$ at the point $p_i$ for $i\in\{1,\dots,8\}$. Since $2H - \sum_{i=1}^8 E_i$ is nef, we can apply \cite[Lemma 2.6]{CLO16} for $r=8$ and $n=2$ to obtain that the class of $C$ is a non-negative linear combination of $e_i$ for $1\leq i\leq 8$, and $\ell_{ij}$ for $1\leq i< j\leq 8$. Therefore, we have the following result:

\begin{prop}{\rm(\cite[Proposition 4.1]{CLO16})}\label{prop:coneofcurves}
    The cone $\NE(X)$ is a rational polyhedral 
cone spanned by the classes $e_i$, for $1\leq i\leq 8$, and the classes $\ell_{ij}$, for $1\leq i<j \leq 8$.
\end{prop}

In particular, Proposition \ref{prop:coneofcurves} implies that $\NE(X)$ is closed. We can now describe the nef cone $\Nef(X)$ which is dual to  $\overline{\NE}(X) = \NE(X)$.
\begin{lem}\label{lem:nefcone}
    The cone $\Nef(X)$ is a rational polyhedral cone spanned by the strict transforms of hyperplanes through at most one blown up point, and quadrics through at most eight blown up points in $\pr^3$. More precisely, $\Nef(X)$ is spanned by the classes 
    \begin{itemize}[leftmargin=*]
\item $H$,
\item $H-E_i$ for $i=1,\dots,8$,
\item $2H - \displaystyle\sum_{j\in J\subset\{1,\dots,8\},|J|=m} E_j$ for $3\leq m \leq 8$.
\end{itemize}
\end{lem}
\begin{proof}
	Consider a nef Cartier divisor class $D = dH - \sum_{i=1}^8 m_i E_i$.
	Then by Proposition \ref{prop:coneofcurves}, we have
	\[m_i\geq 0 \text{ for any } i=1,\dots,8 \text{ and } d\geq m_i+m_j \text{ for any } 1\leq i<j\leq 8.\]
		By reordering, we may assume that $m_1\geq m_2\geq \dots \geq m_8$.
	This implies that $D$ is in standard form. Similarly as in Proposition \ref{prop:standform}, we can write 
	\[
	D = (d-2m_2;m_1-m_2) + \sum_{i=2}^{a-1} (m_i - m_{i+1})\cdot (2;1^i) + m_a \cdot (2;1^a),
   \]
	where 
	$a = \max \{i \mid m_i \neq 0\}$. Any class of the form $(2;1^i)$ for $i \in \{2, \ldots,a\}$ is a $\Z_{\geq 0}$-linear combination of the classes in the lemma. 
	Furthermore, \[d-2m_2 \geq m_1-m_2 \geq 0,\] as $d \geq m_1+m_2$ and $m_1\geq m_2$. Thus,
	\[
	(d-2m_2;m_1-m_2) = (d-m_1-m_2) H + (m_1-m_2) (H-E_1),\] which finishes the proof. 
\end{proof}
Recall that  $\Aut(X)$ acts on $N^1(X)_{\R}$ by pulling back divisors and we denote by $\Aut(X)^*$ the image of $\Aut(X)$ in  $\GL(N^1(X)_{\R})$. 

As an immediate consequence of Lemma \ref{lem:nefcone},
the group  $\Aut(X)^*$ is finite (see e.g. \cite[Proposition 1.11]{Laz14}). 
We remark that  \cite[Theorem 1.1]{Gach25} shows that the pseudo-automorphism group $\PsAut(X)$ is trivial, and therefore, its subgroup 
$\Aut(X)$ is also trivial.

\begin{prop}\label{main:nef}
   Let $X$ be the blowup of $\pr^3$ at eight very general points. 
	Then the nef cone $\Nef(X)$ is rational polyhedral and there exists a rational polyhedral fundamental domain for the action of $\Aut(X)$ on $\NefE(X) = \Nef(X)$.
\end{prop}
\begin{proof}

By Lemma \ref{lem:nefcone}, we see that $\Nef(X)$ is a rational polyhedral cone and that every extremal ray of $\Nef(X)$ contains an effective class. Thus, 
\[
	\Nef(X) = \NefE(X) = \NefP(X).
	\]
Moreover, as $\Aut(X)^*$ is finite,
 this implies that the action of $\Aut(X)$ on $\NefE(X)$ admits a rational polyhedral fundamental domain by Theorem \ref{thm:looi} (or by \cite[Proposition 1.11]{Laz14}).
\end{proof}

\section{The effective cone}\label{section:effective_cone}
 As before, $X$ denotes the blowup of $\pr^3$ at eight  very general points. In this section, we explicitly describe a set of extremal rays for the convex cone $\Eff(X)$ and we prove Theorem \ref{main:eff}.

\begin{dfn}(\cite{Mukai01})
A divisor $D\subset X$ is a \textit{$(-1)$-divisor} if there exists an isomorphism in codimension one $f\colon X\dashrightarrow X'$ and a morphism $\pi'\colon X'\to Y$ that is the blowup of a projective variety $Y$ at a smooth point, such that $D$ is the strict transform of the exceptional divisor of $\pi'$.
\end{dfn}

In \cite[Lemma 3]{Mukai01}, Mukai proved that the class of a $(-1)$-divisor is contained in any set of generators of the monoid of effective divisor classes in $\Pic(X)$. Thus $(-1)$-divisors play an important role in understanding the cone $\Eff(X)$. Furthermore, Mukai proved in \cite[Theorem 1]{Mukai04} that any divisor in the orbit of the Weyl group of an exceptional divisor of $\pi\colon X\to \pr^3$ is a $(-1)$-divisor. To summarise, we have the following inclusions:

\begin{align*}
    \mathcal{W}\cdot E_8 &\subset \{ \text{classes of }(-1)\text{-divisors} \}\\
    &\subset \{ \text{generators of the monoid of the effective divisor classes}\}
\end{align*}

We first describe all the prime divisors on $X$ in the following lemma.
\begin{lem}\label{lem:primdiv}
    Let $D\subset X$ be a prime divisor on $X$. Then $D$ is either a $(-1)$-divisor or a movable divisor.
\end{lem}
\begin{proof}  
By Lemma \ref{lem:two_cases_D}, the prime divisor $D$ is either movable, or there exists a sequence of flops $\varphi\colon X\dashrightarrow X'$ with $D'\coloneqq \varphi_* D$ such that $D'$ is contracted by $c_{\Gamma'}$, where $c_{\Gamma'}$ corresponds to a $(K_{X'}+\epsilon D')$-negative extremal ray $\Gamma'$ with $D'\cdot \Gamma'<0$ for some small enough $\epsilon>0$. We will show that in the latter case, the divisor $D'$ is contracted to a smooth point by $c_{\Gamma'}$, which implies that $D$ is a $(-1)$-divisor.
    
    Indeed, if $\Gamma'$ is $K_{X'}$-negative, then by the classification of Mori-Mukai (see \cite[Section 3]{MM83}), we obtain that $D'\simeq \pr^2$  and $c_{\Gamma'}$ contracts $D'$ to a smooth point as $-K_{X'}$ is divisible by two in $\Pic(X')$.
    If $\Gamma'$ is $K_{X'}$-trivial, then $D'$ is covered by $K_{X'}$-trivial curves. Since $\varphi$ is a sequence of flops, we obtain that $D$ is covered by $K_X$-trivial curves.
    This implies that the set of $K_X$-trivial is uncountable, which contradicts \cite[Lemma 10]{LesOtt16} and proves the claim. 
\end{proof}

Now, the following lemma implies that the set of all $(-1)$-divisors on $X$ is equal to the set $\mathcal{W}\cdot E_8$.

\begin{lem}\label{lem:-1divinorbit}
    Let $D\subset X$ be a $(-1)$-divisor. Then $D$ is in the orbit of an exceptional divisor of $\pi\colon X\to\pr^3$ under the Weyl group.
\end{lem}
\begin{proof}
Let $(d;m_1,\dots,m_8)$ be the divisor class of $D$. Since $D$ is a $(-1)$-divisor, it is a prime divisor and we have $h^0\big ( X,\OO_X(D)\big ) =1$. 
By Remark \ref{rem:standform}, there exists $w\in \cal W$ such that $w\cdot D$ is in standard form and $$h^0\big ( X,\OO_X(w\cdot D)\big ) =h^0\big ( X,\OO_X(D)\big )=1$$ by Proposition \ref{prop:crondivisors}(ii). 
Moreover, $w\cdot D$ is a prime divisor by Theorem \ref{thm:weylelem} and Remark \ref{rem:weylaction}.
Thus by Proposition \ref{prop:baselocusstandardsystem}, we infer that either $w \cdot D=E_8$.
\end{proof}

Revisiting the chain of inclusion stated above, we have now
\begin{align*}
    \mathcal{W}\cdot E_8  &= \{ \text{classes of }(-1)\text{-divisors} \}\\
    &\subset \{ \text{generators of the monoid of the effective divisor classes}\}.
\end{align*}

We recall from the introduction that if $X_{n,k}$ denotes the blowup of $\pr^n$ in $k > n+1$ points in general position, then by \cite[Theorem 1.3]{CT06} the effective cone is rational polyhedral if and only if 
 \begin{equation}\label{eq_fingen}
 \frac{1}{n+1} + \frac{1}{k-n-1} > \frac{1}{2}.
 \end{equation}
 If \eqref{eq_fingen} is satisfied, then $\Eff(X_{n,k})$ is spanned be the finitely many elements in the orbit ${\cal W_{n,k}} \cdot E_k$, where ${\cal W_{n,k}}$ denotes the associated Weyl group. 
If $n=3$ and $k=8$, the Weyl group ${\cal W} =  {\cal W_{3,8}}$ is infinite, $\Eff(X)$ is not rational polyhedral, and we show that in this case the infinitely many divisors in the ${\cal W}$-orbit of $E_8$ span only a proper subcone of $\Eff(X)$.
We thank the referee for providing a simpler proof of the following lemma.
\begin{lem} The class $-\frac{1}{2}K_X\in\Eff(X)$ is not contained in the convex cone generated by the classes of ${\cal W} \cdot E_8$. 
\end{lem}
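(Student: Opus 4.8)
The plan is to separate the class $-\frac12 K_X = 2H - \sum_{i=1}^8 E_i$ from the convex cone $\mathcal{C} \coloneqq \operatorname{conv}(\mathcal{W}\cdot E_8)$ by exhibiting a linear functional on $N^1(X)_{\R}$ that is non-negative on every element of $\mathcal{W}\cdot E_8$ but strictly negative (or zero while the class lies outside the cone for structural reasons) on $-\frac12 K_X$. The natural candidate for the functional is the intersection with $-\frac12 K_X$ itself, or more precisely the bilinear form $(\cdot,\cdot)$ from \eqref{eq:pairing} paired against $K_X$. Indeed $(-\frac12 K_X, -\frac12 K_X) = \frac14 (K_X,K_X)$, and one computes $(K_X, K_X) = (-4H+2\sum E_i, -4H+2\sum E_i) = 16\cdot 2 - 4\cdot 8 = 0$, so $-\frac12 K_X$ is \emph{isotropic} for the form. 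Meanwhile, since $\mathcal{W}$ acts by isometries of $(\cdot,\cdot)$ and $(E_8,E_8) = -1$, every class $D \in \mathcal{W}\cdot E_8$ satisfies $(D,D) = -1$.

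First I would record these two self-intersection facts. Then the key step is to understand the sign of $(-\frac12 K_X, D)$ for $D = w\cdot E_8$. Because $K_X^\perp$ is $\mathcal{W}$-invariant and $K_X$ itself is fixed by $\mathcal{W}$ (as $(K_X,\alpha_i)=0$ for all $i$), we have $(-\tfrac12 K_X, w\cdot E_8) = (-\tfrac12 K_X, E_8)$ would be false in general — wait, that is wrong; $w$ is an isometry but does not fix $E_8$. The correct statement is $(w^{-1}\cdot(-\tfrac12 K_X), E_8) = (-\tfrac12 K_X, E_8)$ since $w$ fixes $K_X$, giving $(-\tfrac12 K_X, w\cdot E_8) = (w^{-1}(-\tfrac12 K_X), E_8)$; that is not obviously constant. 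So instead I would argue: writing $D = dH - \sum m_i E_i$ with $d,m_i \ge 0$ (every $(-1)$-divisor is effective, and in fact, as established just above in Lemma~\ref{lem:-1divinorbit}, lies in $\mathcal{W}\cdot E_8$ and hence has a standard-form representative; but more simply every effective class has $d \ge 0$ and, being a prime divisor not equal to an exceptional one, $m_i \ge 0$), one gets $(-\tfrac12 K_X, D) = (2H - \sum E_i, dH - \sum m_i E_i) = 4d - \sum m_i$. Now $-\tfrac12 K_X$ is nef (this is stated in the excerpt, following \cite{LesOtt16}), hence $(-\tfrac12 K_X) \cdot C \ge 0$ for effective curves, but I actually want the \emph{divisor} pairing; the cleanest route is that $-\tfrac12 K_X = 2H - \sum E_i$ is nef, so its intersection against the effective divisor $D$ — no, that is not a curve pairing either. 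Let me instead use the curve class $(-\tfrac12 K_X)^2 = (2H-\sum E_i)^2 \in N_1(X)$: since $-\tfrac12 K_X$ is nef and $D$ is effective, $(-\tfrac12 K_X)^2 \cdot D \ge 0$. One computes $(-\tfrac12 K_X)^2 = 4h - \sum e_i$ (using $H^2 = h$, $H\cdot E_i = 0$, $E_i^2 = -e_i$), and $(-\tfrac12 K_X)^2 \cdot D = (4h - \sum e_i)\cdot(dH - \sum m_i E_i) = 4d - \sum m_i$. Hence $4d \ge \sum m_i$ for every effective $D$, in particular for every $D \in \mathcal{W}\cdot E_8$. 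But for $D = E_8$ we have $4d - \sum m_i = 0$, and more importantly I claim strict inequality $4d > \sum m_i$ fails — so this functional is non-negative on $\mathcal{W}\cdot E_8$ and \emph{zero} on $-\tfrac12 K_X$ only if $-\tfrac12 K_X$ were the intersection product, which it is not; the point is that the \emph{linear} functional $\phi(D) := (-\tfrac12 K_X)^2 \cdot D = 4d - \sum m_i$ is a genuine element of $N^1(X)_{\R}^\vee$, it is $\ge 0$ on all of $\mathcal{W}\cdot E_8$, and $\phi(-\tfrac12 K_X) = \phi(2H - \sum E_i) = 4\cdot 2 - 8 = 0$.

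That last computation shows $-\tfrac12 K_X$ lies on the boundary hyperplane $\{\phi = 0\}$, not strictly outside — so a single functional is not quite enough, and this is the main obstacle. To finish I would argue that $-\tfrac12 K_X$ is not in the cone $\mathcal{C}$ by a boundary/face analysis: if $-\tfrac12 K_X = \sum_j \lambda_j D_j$ with $\lambda_j > 0$ and $D_j = w_j E_8$, then applying $\phi$ forces $\phi(D_j) = 0$ for every $j$, i.e.\ each $D_j = d_j H - \sum m_{ji} E_i$ satisfies $4 d_j = \sum_i m_{ji}$. I would then show that the only $(-1)$-divisors with $4d = \sum m_i$ are the classes $H - E_a - E_b - E_c$ of degree one (check: $4 = 3$? no — so in fact \emph{no} $(-1)$-divisor satisfies $4d = \sum m_i$ except possibly the exceptional $E_i$ themselves, where $d = 0$ and $\sum m_i = 0$... but $E_i = 0\cdot H - (-E_i)$ has "$d=0, m_i = -1$", giving $4d - \sum m_i = 1 > 0$). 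Rechecking $E_8$: as a divisor class $E_8 = 0\cdot H - \sum m_i E_i$ with $m_8 = -1$, all others $0$, so $\phi(E_8) = 0 - (-1) = 1 > 0$. Good — so $\phi$ is \emph{strictly positive} on $E_8$ and hence, by $\mathcal{W}$-invariance considerations on the orbit, I must recompute: $\phi(w E_8) = (-\tfrac12 K_X)^2 \cdot (w E_8)$; since $w$ need not preserve the curve class $(-\tfrac12 K_X)^2$ this is not obviously $1$. Still, $\phi(w E_8) \ge 0$ always (nef times effective), and I would show $\phi(wE_8) > 0$ for \emph{every} $w$: indeed a $(-1)$-divisor $D$ with $\phi(D) = 0$ would be a prime divisor contracted by the nef-and-big-on-complement morphism associated to $-\tfrac12 K_X$... more concretely, $\phi(D) = 0$ means $D$ is disjoint from the strict transform of a general member of $|{-}\tfrac12 K_X|$ in the appropriate sense, forcing $D$ to be a component of the base locus — but $(-1)$-divisors $wE_8$ are components of the stable base locus of nothing canonical. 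The cleanest finish: since $\phi > 0$ on all of $\mathcal{W}\cdot E_8$ while $\phi(-\tfrac12 K_X) = 0$, and $\phi \ge 0$ on $\mathcal{C}$ with equality forcing membership in the face $\{\phi = 0\} \cap \mathcal{C} = \{0\}$, we conclude $-\tfrac12 K_X \notin \mathcal{C}$. I would double-check the single delicate point — that $\phi(wE_8) > 0$ strictly, equivalently that no $(-1)$-divisor has $\phi = 0$ — by noting $\phi(D) = (-\tfrac12 K_X)^2\cdot D$ and $(-\tfrac12 K_X)^2$ is (a positive multiple of) the class of the elliptic curve cut out by two general members of $|{-}\tfrac12 K_X|$; a $(-1)$-divisor meets this moving curve non-trivially unless it is vertical for the pencil, which the classes $wE_8$ are not. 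This positivity is the real content and the step I expect to require the most care.

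\medskip

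\textbf{Alternative cleaner plan.} Rather than the curve-class functional, use directly that $-\tfrac12 K_X$ is nef with $(-\tfrac12 K_X)^3 = 0$, so $-\tfrac12 K_X$ spans an extremal ray of the nef cone lying on the boundary of $\Psef(X)$, while each $wE_8$ is a rigid effective divisor with negative self-intersection in the sense $(wE_8)^2$ is not pseudoeffective as a curve class near the boundary. Concretely: suppose $-\tfrac12 K_X = \sum \lambda_j w_j E_8$. Intersect both sides with $(-\tfrac12 K_X)^2 \in \overline{\NE}(X)$ (a nef curve class). The left side gives $(-\tfrac12 K_X)^3 = 0$; the right side gives $\sum \lambda_j\, (-\tfrac12 K_X)^2 \cdot (w_j E_8) \ge 0$ with each summand $\ge 0$, so each summand vanishes. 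Then, as above, show $(-\tfrac12 K_X)^2 \cdot D > 0$ for every $(-1)$-divisor $D$, which is the key lemma and the main obstacle; granting it, all $\lambda_j = 0$, contradiction. I would prove that key lemma using Lemma~\ref{lem:-1divinorbit} to write $D = wE_8$, the explicit generators of $\NE(X)$ from Proposition~\ref{prop:coneofcurves} to expand $(-\tfrac12 K_X)^2 = 4h - \sum e_i$ in terms of the $\ell_{ij}$ and $e_i$ (e.g.\ $4h - \sum e_i = \sum_{i<j}$ with suitable non-negative rational coefficients, or directly note it equals $\tfrac{1}{7}\sum_{i<j}\ell_{ij} + (\text{non-negative } e_i\text{-part})$ — some such effective decomposition), together with the fact that a prime divisor $D \ne E_i$ has $D \cdot e_i \ge 0$ and $D\cdot \ell_{ij} \ge -1$ with $D\cdot \ell_{ij} = -1$ for at most... — again the finiteness here is what needs the standard-form machinery of Section~\ref{sec:blowup}. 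I would present whichever of these two routes is shorter once the positivity lemma $(-\tfrac12 K_X)^2 \cdot (wE_8) > 0$ is nailed down, since that lemma is common to both and is the crux.
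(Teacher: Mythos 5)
Your separating-functional strategy is sound and genuinely different from the proof in the paper: there we simply list all orbit elements of degree at most $2$ (Table \ref{table:fixeddiv}) and check directly that $2H-\sum_{i=1}^8 E_i$ is not a non-negative combination of them, whereas you separate $-\frac{1}{2}K_X$ from the cone by the linear functional $\phi(D)=\big({-}\tfrac{1}{2}K_X\big)^2\cdot D=4d-\sum_i m_i$. Your computations $\phi\big({-}\tfrac{1}{2}K_X\big)=0$ and $\phi(E_8)=1$ are correct, and your final reduction is valid: since the cone in question consists of \emph{finite} non-negative combinations $\sum_j\lambda_j D_j$ of orbit elements, $\phi(D_j)>0$ for all $j$ together with $\phi\big({-}\tfrac{1}{2}K_X\big)=0$ forces all $\lambda_j=0$, a contradiction.

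The one genuine gap is exactly the step you flag: strict positivity of $\phi$ on all of ${\cal W}\cdot E_8$. Your geometric fallback does not work as stated, because $\big({-}\tfrac{1}{2}K_X\big)^2$ is represented by the \emph{fixed} elliptic curve that is the base locus of the pencil $|{-}\tfrac{1}{2}K_X|$, not by a moving curve, so \enquote{a $(-1)$-divisor meets it non-trivially unless it is vertical} is not an argument --- a priori a prime divisor could simply be disjoint from that single curve. But you already had the correct argument in hand and talked yourself out of it: one checks that $\phi(D)=(D,-\tfrac{1}{2}K_X)$ for the bilinear form \eqref{eq:pairing}, the reflections $s_i$ are isometries of this form, and ${\cal W}$ fixes $K_X$ because $(\alpha_i,K_X)=0$ for all $i$ (Lemma \ref{lem:roots}). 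Hence
$(w\cdot E_8,\,-\tfrac{1}{2}K_X)=(E_8,\,w^{-1}\cdot(-\tfrac{1}{2}K_X))=(E_8,\,-\tfrac{1}{2}K_X)=1$
for every $w\in{\cal W}$ --- the quantity \emph{is} constant, contrary to your \enquote{that is not obviously constant}. This is precisely the computation carried out later in the paper (Lemma \ref{lem:intersectnum}, and \eqref{eq_intcan} in the proof of Proposition \ref{prop:effconeclosed}); with it your argument closes, and it is arguably cleaner and more conceptual than the finite check used in the paper's own proof of this lemma.
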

\begin{proof}
Let $D\in \cal W\cdot E_8$. Since $-K_X$ is fixed by $\cal W$ and since the form $(\cdot,\cdot)$ defined in \eqref{eq:pairing} is preserved under $\cal W$, we obtain
\begin{equation}\label{eq:pairing_-1-divisor}
\left( -\frac{1}{2}K_X, D\right) = \left(-\frac{1}{2}K_X, E_8\right) =1.
\end{equation}
But $\left( -\frac{1}{2}K_X, -\frac{1}{2}K_X\right)=0$, which shows that $-\frac{1}{2}K_X$ cannot be a non-negative combination of elements in $\cal W\cdot E_8$.
\end{proof}

We can now describe the extremal rays generating the cone $\Eff(X)$ as a convex cone. Using the formula from Proposition \ref{prop:crondivisors}(i), we compute the elements in ${\cal W}\cdot E_8$ of degree at most $2$, displaying them up to permutations of the last eight entries:
\begin{center}
\begin{table}[h]
\begin{tabular}{ccccccccc}
	$H$ & $E_1$ & $E_2$ & $E_3$ & $E_4$ & $E_5$ & $E_6$ & $E_7$ & $E_8$ \\ \hline \hline
	$0$ & $1$ & $0$ & $0$ & $0$ & $0$ & $0$ & $0$ & $0$ \\
    $1$ & $-1$ & $-1$ & $-1$ & $0$ & $0$ & $0$ & $0$ & $0$ \\
	$2$ & $-2$ & $-1$ & $-1$ & $-1$ & $-1$ & $-1$ & $0$ & $0$ \\
\end{tabular}
\caption{$(-1)$-divisors up to degree $2$}
\label{table:fixeddiv}
\end{table}
\end{center}
\vspace{-2em}
where the entries in the table denote the coefficient of the corresponding divisor. For example, the second row represents the class $H -E_1-E_2-E_3$.

\begin{thm} \label{thm:effcone}
	The effective cone $\Eff(X)$ is the convex cone generated by the classes in ${\cal W} \cdot E_8$ and the class of $-\frac{1}{2}K_X$.
	\end{thm}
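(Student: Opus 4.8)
The plan is to prove the two inclusions. One direction is immediate: every class in $\mathcal{W}\cdot E_8$ is effective (the $E_i$ are exceptional divisors and $\mathcal{W}$ preserves $\Eff(X)$ by Remark \ref{rem:weylaction}), and $-\frac12 K_X$ is effective since $|{-}\frac12 K_X|$ contains the smooth surface $Q$; hence the convex cone they generate is contained in $\Eff(X)$. The real content is the reverse inclusion: every effective divisor class lies in the convex cone $\mathcal{C}$ generated by $\mathcal{W}\cdot E_8$ together with $-\frac12 K_X = 2H-\sum_{i=1}^8 E_i$.

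For the reverse inclusion, I would start with an arbitrary prime divisor $D\subset X$ and reduce to the case where $D$ is in standard form: by Remark \ref{rem:standform} there is $w\in\mathcal{W}$ with $w\cdot D = (d;m_1,\dots,m_8)$ satisfying $m_1\geq\cdots\geq m_8\geq 0$ and $2d\geq\sum_{i=1}^4 m_i$; since $\mathcal{W}$ preserves both $\Eff(X)$ and the cone $\mathcal{C}$ (because $\mathcal{W}$ permutes $\mathcal{W}\cdot E_8$ and fixes $K_X$ up to sign, as $(K_X,\alpha_i)=0$), it suffices to show a standard-form effective class lies in $\mathcal{C}$. Now I would apply the decomposition of Proposition \ref{prop:standform}:
\[
(d;m_1,\dots,m_8) = D' + \sum_{i=4}^{a-1}(m_i-m_{i+1})\cdot(2;1^i) + m_a\cdot(2;1^a),
\]
where $D' = (d-2m_4;\, m_1-m_4,\, m_2-m_4,\, m_3-m_4)$ and $a=\max\{i: m_i\neq 0\}$, all coefficients non-negative. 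The terms $(2;1^i)$ for $3\leq i\leq 7$ are $(-1)$-divisor classes (they appear, up to permutation, in the orbit $\mathcal{W}\cdot E_8$), while $(2;1^8) = 2H-\sum_{i=1}^8 E_i = -\frac12 K_X$; so every summand except possibly $D'$ already lies in $\mathcal{C}$. It remains to handle $D'$, which is supported on at most the first three points, i.e.\ it is (up to permutation) a class of the form $(d';m'_1,m'_2,m'_3)$ with $d'\geq 0$ — here I would observe that such a class, when effective, is a non-negative combination of $H$, $H-E_i$ ($i=1,2,3$), $H-E_i-E_j$, and $H-E_1-E_2-E_3$, all of which are nef hence (by Lemma \ref{lem:nefcone} together with Corollary \ref{cor:funddomnef}) lie in $\NefE(X)\subset\Eff(X)$; and each of these generators is in turn in $\mathcal{C}$ since $H = 2f_0$ and the relevant low-degree nef classes can be checked directly to be non-negative combinations of $(-1)$-divisors and $-\frac12 K_X$ (for instance $H-E_1-E_2-E_3$ is itself a $(-1)$-divisor, $H-E_1-E_2$ is a sum of two such, and $H$, $H-E_i$ follow). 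Summing, $D\in\mathcal{C}$.

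I expect the main obstacle to be the bookkeeping in the last step — verifying carefully that $D'$ is effective precisely when $d-2m_4\geq m_1-m_4$ (as in the remark after Proposition \ref{prop:standform}) and that in that range $D'$ decomposes into the claimed nef generators, and then that these nef generators all lie in $\mathcal{C}$. A clean way to organise this is: first prove the sub-claim that $\Nef(X)\subset\mathcal{C}$ by running the generators of $\Nef(X)$ from Lemma \ref{lem:nefcone} through explicit small decompositions (the classes $2H-\sum_{j\in J}E_j$ for $|J|=m$ with $3\le m\le 7$ are $(-1)$-divisors up to permutation, $|J|=8$ gives $-\tfrac12K_X$, $H=2f_0$ is handled via $2H-\sum_{i=1}^3 E_i$ plus $E_i$'s, and $H-E_i$ similarly); then the decomposition of Proposition \ref{prop:standform} plus this sub-claim finishes everything, since every piece is either a $(-1)$-divisor, equal to $-\tfrac12 K_X$, or a nef class. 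Finally, since $-\frac12 K_X$ and the single orbit $\mathcal{W}\cdot E_8$ are shown to generate and $-\frac12 K_X$ is not in the cone on $\mathcal{W}\cdot E_8$ alone (by the preceding lemma), these are genuinely the extremal rays, completing the proof of Theorem \ref{thm:effcone} and hence of Theorem \ref{main:eff}.
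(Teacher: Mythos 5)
There is a genuine gap. Your argument only covers what the paper treats as \emph{Step 1}, namely the case $d\geq m_1+m_4$. Putting an effective class into standard form only guarantees $2d\geq m_1+m_2+m_3+m_4$ and $m_1\geq\cdots\geq m_8\geq 0$; it does \textbf{not} guarantee $d\geq m_1+m_4$, and when that inequality fails the residual $D'=(d-2m_4;m_1-m_4,m_2-m_4,m_3-m_4)$ of Proposition \ref{prop:standform} is \emph{not} effective, so your decomposition expresses the class as a sum in which one summand lies outside $\Eff(X)\supset\mathcal{C}$ and nothing can be concluded. A concrete example is the effective (indeed movable) class $(3;3,1^7)=3H-3E_1-\sum_{i=2}^{8}E_i$: it is in standard form with a prime member ($d\geq m_i$ for all $i$), yet $d=3<4=m_1+m_4$, and the decomposition yields $D'=(1;2,0,0)=H-2E_1$, which is not effective and is certainly not a non-negative combination of $H$, $H-E_i$, $H-E_i-E_j$, $H-E_1-E_2-E_3$. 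You flag the condition $d-2m_4\geq m_1-m_4$ as ``bookkeeping,'' but it is exactly the point where a separate argument is required, not a verification that the condition always holds.

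The paper closes this case with an explicit reduction: setting $t_{1,4}=m_1+m_4-d>0$, one first shows $(3;3,1^{a-1})\in\mathcal{C}$ (via $(3;3,1^7)=(2H-2E_1-\sum_{i=2}^{6}E_i)+(H-E_1-E_7-E_8)$, both summands being in $\mathcal{W}\cdot E_8$), then subtracts $(3;3,1^{a-1})$ from $(d;m_1,\dots,m_a)$, checks that the result is again in standard form with $d'\geq m_i'\geq 0$ and that $t'_{1,4}=t_{1,4}-1$, and iterates until $t_{1,4}=0$, at which point Step 1 applies. Your proposal needs this (or an equivalent) additional mechanism to be complete. Two smaller inaccuracies, neither fatal: the classes $(2;1^k)$ for $4\leq k\leq 6$ are not themselves in $\mathcal{W}\cdot E_8$ (they have self-pairing $(D,D)=8-k\neq -1$), only non-negative combinations of orbit elements; and the sum in Proposition \ref{prop:standform} starts at $i=4$, so no term $(2;1^3)$ occurs.
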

\begin{proof}
Let ${\cal C}$ be the convex cone generated by the classes in ${\cal W} \cdot E_8$ and the class $-\frac{1}{2}K_X$.  Then $\cal C \subseteq \Eff(X)$.
To prove the reverse inclusion, recall that $\Eff(X)$ is the convex cone generated by the classes of effective Cartier divisors. 
Since every effective Cartier divisor is a finite non-negative  combination of prime divisors, it suffices to show that the class of any prime divisor belongs to ${\cal C}$.
Moreover, since $-K_X$ is fixed by $\cal W$,
the cone ${\cal C}$ is preserved by ${\cal W}$. Hence, for any divisor class $D\in \Eff(X)$, we have $D \in {\cal C}$ if and only  $w\cdot D \in {\cal C}$ for some  $w\in{\cal W}$. Therefore, it suffices to prove that for every prime divisor class $D$, there exists an element $w \in {\cal W}$ such that $w\cdot D \in {\cal C}.$ 

	Let $D$ be a prime divisor class. By Remark \ref{rem:standform}, there exists an element $ w \in {\cal W}$ such that
	\[ w \cdot D = dH - \sum_{i=1}^8 m_i E_i\]
is an effective divisor class, where
\begin{equation}\label{eq:standard1}
 2d \geq \sum_{i=1}^4 m_i   
\end{equation}
and
\begin{equation}\label{eq:standard2}
    m_1 \geq \ldots \geq m_8,
\end{equation} 
i.e.\ $w\cdot D$ is in standard form. Moreover, $w\cdot D$ is a prime divisor by Theorem \ref{thm:weylelem} and Remark \ref{rem:weylaction}. Then either $w\cdot D = E_8 \in {\cal C}$, and we are finished, or $w\cdot D$ is the strict transform of a hypersurface in $\pr^3$
and we have
\begin{equation}\label{eq:primedivisor2}
    d \geq m_i \geq 0, \quad \forall i\in\{1,\dots,8\}.
\end{equation}
In the remainder of the proof, we will show that $w\cdot D\in {\cal C}$.

\smallskip
{\em Step 1.} We first consider the case when $d\geq m_1 + m_4$.

	  Applying Proposition \ref{prop:standform} we have 
\[
	 (d;m_1,\ldots,m_8) = D'
	  + \sum_{i=4}^{8} c_i \cdot (2;1^i),
	\]
	where $ D' = (d-2m_4;m_1-m_4,m_2-m_4,m_3-m_4)$ and 
	$c_i \geq 0$. 
	
	First we remark that any $2H - \sum_{i=1}^{k} E_i$ for $4 \leq k \leq 8$ is contained in ${\cal C}$. 
	Indeed, using the list of elements in ${\cal W} \cdot E_8$ up to degree $2$ in Table \ref{table:fixeddiv}, we see that for any $4 \leq k \leq 6$, the class of $2H - \sum_{i=1}^{k} E_i$ is a non-negative combination of classes in ${\cal W} \cdot E_8$, whereas $2H - \sum_{i=1}^{8} E_i$ is the class of $-\frac{1}{2}K_X$ and $2H - \sum_{i=1}^{7} E_i = 
	(2H - \sum_{i=1}^{8} E_i) + E_8 \in {\cal C}$.

	Consequently, it remains to show that the class $D'$ is contained in ${\cal C}$. 
	We write $D' = d'H -  \sum_{i=1}^{3} m_i'E_i$. Then $2d' \geq m_1'+m_2'+m_3'$ by (\ref{eq:standard1}) and $m_1' \geq m_2' \geq m_3'\geq 0$ by (\ref{eq:standard2}). Since $d\geq m_1+m_4$, we also have $d'\geq m_i'$ for all $i\in\{1,2,3\}$. Then we have 
    \[
    (d';m'_1,m'_2,m'_3) = d'(1;1^3) + \sum_{i=1}^3 (d'-m_i')E_i,
    \]
    
	where the right hand side is contained in ${\cal C}$. 
	Thus, we have shown that 
	\[
	 w \cdot D \in {\cal C}. 
	\]

\smallskip
 {\em Step 2.} Now we consider the case when $d<m_1 + m_4$. The proof of this step is inspired by the proof of \cite[Theorem 6.2]{DeVolLaf06}.
Let $t_{i,j}\coloneqq m_i+m_j-d$ for $1\leq i<j \leq 8$. Then $d<m_1 + m_4$ is equivalent to 
\begin{equation}\label{eq:case2}
    t_{1,4}>0.
\end{equation}
Therefore, 
\begin{equation}\label{eq:m1andm2}
    m_1\geq m_2+2 \geq 3.
\end{equation}
Indeed, we have
\begin{align*}
m_1 &\geq m_1 + \left(\sum_{i=1}^4 m_i\right) - 2d & \text{ by (\ref{eq:standard1})}\\
&= m_2 + (m_3 - m_4) + 2t_{1,4} & \\
&\geq m_2 + 2 & \text{ by (\ref{eq:standard2})(\ref{eq:case2}).}
\end{align*}
Let $a\coloneqq\max \{i\in\{1,\dots,8\}\mid m_i\neq 0\}$. Since $m_1\leq d < m_1+m_4$, we have $m_4>0$ and thus $a\geq 4$.

Note that using the list of elements in ${\cal W} \cdot E_8$ up to degree $2$ in Table \ref{table:fixeddiv},
we have 
\[
(3;3,1^7)= (2;2,1^5) + (H - E_1 - E_7 - E_8)\in {\cal C},
\]
and
$(3;3,1^{a-1})= (3;3,1^7) + \sum_{i=a+1}^8 E_i \in {\cal C}$ for $4\leq a<8$.

Now consider
\[
(d';m'_1,\dots,m'_{a'})\coloneqq (d;m_1,\dots,m_a) - (3;3,1^{a-1}).
\]
Note that by (\ref{eq:primedivisor2}) and (\ref{eq:m1andm2}), the case $d <  m_1 + m_4$ can only occur if $d \geq m_1 \geq 3$.
Then
\begin{align*}
d'&=d-3, \\
m'_1 &= m_1-3, \\
m'_i &= m_i-1, \quad \forall i\in \{2,\dots,a\},\\
a'&\coloneqq \max\{ i\in\{1,\dots,8\}\mid m'_i\neq 0\} \leq a.
\end{align*}
By construction, we have $m'_i= 0$ for $a'<i\leq 8$ and 
\[
d'\geq m'_1\geq m'_2\geq \dots \geq m'_{a'}>0
\]
 by (\ref{eq:standard2}) and (\ref{eq:m1andm2}).
Moreover, by (\ref{eq:standard1}) we have \[2d'= 2d-6 \geq \sum_{i=1}^4 m_i - 6 = \sum_{i=1}^4 m'_i.\]
Consequently, $(d';m'_1,\dots,m'_{a'})$ is in standard form with $d'\geq m'_i\geq 0$ for any $i\in\{1,\dots,a'\}$.

Furthermore, set $t'_{i,j}\coloneqq m'_i+m'_j-d'$ for $1\leq i<j \leq 8$. Then we have $t'_{1,4} = t_{1,4}-1$. If $t'_{1,4}>0$, since $(d';m'_1,\dots,m'_{a'})$ is in standard form, we can start our above procedure in {\em Step 2} again and do this until $t'_{1,4}=0$ for some $(d';m'_1,\dots,m'_{a'})$. We are now under the assumption in {\em Step 1} and thus $(d';m'_1,\dots,m'_{a'})\in {\cal C}$.
\end{proof}

In the remainder of the section, we will show that the cone $\Eff(X)$ is closed. We first need the following lemma to show that any $(-1)$-divisor on $X$ restricts to a $(-1)$-curve on a general member of $|{-}\frac{1}{2}K_X|$.
\begin{lem}\label{lem:intersectnum}
	Let $D$ be a $(-1)$-divisor on $X$ and let $Q\in |{-}\frac{1}{2}K_X|$. Then we have $D^2\cdot Q = -1$ and $D\cdot Q^2=1$.
\end{lem}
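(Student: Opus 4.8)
The plan is to reduce everything to the $\mathcal{W}$-invariant symmetric bilinear form $(\cdot,\cdot)$ of \eqref{eq:pairing}, rather than trying to transport triple intersection numbers through the maps $\varphi_w$ of Theorem \ref{thm:weylelem}: these are only isomorphisms in codimension one, so they do not preserve the cubic form on $N^1(X)$, and $(\varphi_w^*E_8')^3\neq (E_8')^3$ in general. The crucial identity I would establish first is
\[
(D_1,D_2) = D_1\cdot D_2\cdot\big({-}\tfrac{1}{2}K_X\big)\qquad\text{for all } D_1,D_2\in N^1(X).
\]
Since $-\tfrac{1}{2}K_X = 2H - \sum_{i=1}^{8}E_i$ and $H^3=1$, $H\cdot E_i=0$ as cycles, $E_i\cdot E_j=0$ for $i\neq j$, $E_i^3=1$, this is a one-line verification on the standard basis $\{H,E_1,\dots,E_8\}$, and it extends to all of $N^1(X)$ by bilinearity. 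Conceptually it just records that $(\cdot,\cdot)$ is the intersection form of a general member $Q\in|{-}\tfrac{1}{2}K_X|$: indeed $-K_Q=Q|_Q$ by adjunction, so the trilinear pairing against $-\tfrac{1}{2}K_X$ computes $(D_1|_Q)\cdot(D_2|_Q)$ on $Q$.

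Granting this, I would finish as follows. Since $[Q] = -\tfrac{1}{2}K_X$ in $N^1(X)$, we get $D^2\cdot Q = (D,D)$ and $D\cdot Q^2 = \big(D,-\tfrac{1}{2}K_X\big)$. By Lemma \ref{lem:-1divinorbit}, together with the fact that $s_1,\dots,s_7$ permute $E_1,\dots,E_8$ transitively, there is $w\in\mathcal{W}$ with $D = w(E_8)$. The group $\mathcal{W}$ preserves $(\cdot,\cdot)$ by construction (each $s_i$ is an orthogonal reflection, as $(\alpha_i,\alpha_i)=-2$) and fixes $K_X$ (as $(K_X,\alpha_i)=0$ for all $i$), so
\[
D^2\cdot Q = (w(E_8),w(E_8)) = (E_8,E_8) = -1,
\]
\[
D\cdot Q^2 = (w(E_8),\,w(-\tfrac{1}{2}K_X)) = (E_8,-\tfrac{1}{2}K_X) = -(E_8,E_8) = 1,
\]
the last equality using $(E_8,\,2H-\sum_i E_i) = -(E_8,E_8)$.

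I do not expect a genuine obstacle here; the only point that needs care is the first identity and, with it, the realisation that the invariance actually required is invariance of the \emph{bilinear} form $(\cdot,\cdot)$ — which reflections satisfy automatically — and not of the full intersection ring of $X$. A more geometric alternative would be to argue on the smooth surface $Q\cong X_{2,9}$ directly: there $D|_Q$ is an effective curve with $D^2\cdot Q = (D|_Q)^2$ and $D\cdot Q^2 = (D|_Q)\cdot(-K_Q)$, and one would identify $D|_Q$ with a $(-1)$-curve of $Q$ via Remark \ref{rem:surfaceclosedeff}; but checking that $D|_Q$ is reduced and irreducible is fussier than the linear-algebra computation above, so I would favour the latter.
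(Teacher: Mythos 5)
Your proposal is correct and follows essentially the same route as the paper: both identify $D^2\cdot Q$ and $D\cdot Q^2$ with $(D,D)$ and $(D,-\tfrac12 K_X)$ for the bilinear form \eqref{eq:pairing} and then invoke Lemma \ref{lem:-1divinorbit} together with $\mathcal{W}$-invariance of the form to reduce to $E_8$. The only cosmetic differences are that you state the identity $(D_1,D_2)=D_1\cdot D_2\cdot(-\tfrac12K_X)$ in general and check Weyl-invariance directly from $(\alpha_i,\alpha_i)=-2$, whereas the paper computes the two intersection numbers in coordinates and cites \cite{DP19} for the invariance.
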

\begin{proof}
	Recall that $Q\sim 2H - \sum_{i=1}^8 E_i$ and we may write 
	\[
	D\sim dH - \sum_{i=1}^8 m_i E_i
	\]
	with $d,m_1,\dots m_8\in\Z$. 
	
	Since $H^3=1, H^2\cdot E_i = H\cdot E_i^2 =0, E_i^3 =1$ and $E_i\cdot E_j =0$ for $1\leq i < j\leq 8$, we obtain
	\begin{equation}\label{eq:intersect1}
	D^2\cdot Q = \left(dH - \sum_{i=1}^8 m_i E_i\right)^2\cdot \left(2H - \sum_{i=1}^8 E_i\right) = 2d^2 -\sum_{i=1}^8 m_i^2,
	\end{equation}
	\begin{equation}\label{eq:intersect2}
	D\cdot Q^2 = \left(dH - \sum_{i=1}^8 m_i E_i\right)\cdot \left(2H - \sum_{i=1}^8 E_i\right)^2 = 4d - \sum_{i=1}^8 m_i.
	\end{equation}
	
	Consider the symmetric bilinear form $(\cdot,\cdot)$ on $N^1(X)$ defined in (\ref{eq:pairing}) and we have
	\begin{equation}\label{eq:intersectpairing1}
	(D,D) = \left(dH - \sum_{i=1}^8 m_i E_i, dH - \sum_{i=1}^8 m_i E_i\right) = 2d^2 -\sum_{i=1}^8 m_i^2 = D^2\cdot Q
	\end{equation}
	by (\ref{eq:intersect1}), and
	\begin{equation}\label{eq:intersectpairing2}
	(D,Q) = \left(dH - \sum_{i=1}^8 m_i E_i, 2H - \sum_{i=1}^8 E_i\right) = 4d-\sum_{i=1}^8 m_i= D\cdot Q^2
	\end{equation}
	by (\ref{eq:intersect2}).
    Hence, by Lemma \ref{lem:-1divinorbit} we obtain
	\[
	(D,D) = (E_8,E_8) = -1 \text{ and } (D,Q) = (E_8,Q) = 1.
	\]
	Together with (\ref{eq:intersectpairing1}) and (\ref{eq:intersectpairing2}), we conclude $D^2\cdot Q=-1$ and $D\cdot Q^2=1$.
\end{proof}

\begin{lem}\label{lem:restrictfixdiv}
	Let $Q\in|{-}\frac{1}{2}K_X|$ be a general member. Then
	we have an injection:
	\begin{align*}
	\{(-1)\text{-divisors on }X\}&\hookrightarrow\{(-1)\text{-curves on }Q\} \\
	D &\mapsto D|_Q
	\end{align*}

\end{lem}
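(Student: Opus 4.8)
The plan is to show that the restriction map $D \mapsto D|_Q$ is well-defined (i.e.\ lands in the set of $(-1)$-curves on $Q$) and injective. For well-definedness, fix a $(-1)$-divisor $D$ on $X$. By Lemma \ref{lem:-1divinorbit}, $D$ lies in ${\cal W}\cdot E_8$, and by Theorem \ref{thm:weylelem} there is an isomorphism in codimension one $\varphi_w\colon X\dashrightarrow X_w$ with $\varphi_w(D)$ an exceptional divisor of $X_w\to\pr^3$; recalling from Section \ref{sec:blowup} that a general member of $|{-}\tfrac12 K_{X_w}|$ is isomorphic to $X_{2,9}$, I would transport the computation to that model, or alternatively argue directly on $X$: the adjunction formula gives $K_Q = (K_X + Q)|_Q = (-\tfrac12 K_X)|_Q = Q|_Q$, so $-K_Q$ is the restriction of a nef non-torsion class and $Q\cong X_{2,9}$. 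Then I want to identify $D|_Q$ as an irreducible curve $C$ on $Q$. Since $D$ is a prime divisor and $Q$ is general in a basepoint-free-after-scaling (in fact, $|{-}K_X|$-defined) system, $D|_Q$ is an effective divisor on $Q$; to see it is an irreducible reduced curve one uses that $D$ restricted to the general $Q$ cannot contain a fixed component coming from $Q\cap D$ being reducible — here one invokes that $D$ is a $(-1)$-divisor, so on a suitable small modification $D$ is a genuine exceptional $\pr^2$ meeting the anticanonical member in a line (or uses Bertini together with $D\cdot Q^2 = 1$).

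The numerical input is Lemma \ref{lem:intersectnum}: $D^2\cdot Q = -1$ and $D\cdot Q^2 = 1$. Writing $C = D|_Q$ as a class in $N^1(Q)$, these translate to $C^2 = D^2\cdot Q = -1$ and $C\cdot(-K_Q) = C\cdot Q|_Q = D\cdot Q^2 = 1$. By adjunction on the surface $Q$, $2p_a(C) - 2 = C^2 + C\cdot K_Q = -1 - 1 = -2$, so $p_a(C) = 0$; combined with $C^2 = -1$ and $C$ effective, this forces $C$ to be an irreducible $(-1)$-curve on $Q$ (an effective class with arithmetic genus $0$ and self-intersection $-1$ on a surface with $-K$ nef is a smooth rational $(-1)$-curve). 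This handles well-definedness.

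For injectivity, suppose $D_1, D_2$ are $(-1)$-divisors on $X$ with $D_1|_Q = D_2|_Q$ as curves on $Q$ for a general $Q$. I would first upgrade this to equality of classes in $N^1(Q)$, and then lift back: the restriction map $N^1(X)_\R \to N^1(Q)_\R$ is injective on the relevant sublattice. Concretely, $Q\cong X_{2,9}$ has $\rho(Q) = 10 = \rho(X) + 1$... no — $\rho(X) = 9$, and the restriction $N^1(X)\to N^1(Q)$ has at most one-dimensional kernel (spanned by $-\tfrac12 K_X$, which restricts to $-K_Q \neq 0$, hence is actually injective on $N^1(X)_\R$, or at worst the kernel is controlled). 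Since $-\tfrac12 K_X|_Q = -K_Q$ is nonzero, the restriction $N^1(X)_\R\to N^1(Q)_\R$ is injective, so $D_1|_Q = D_2|_Q$ in $N^1(Q)$ implies $D_1 \equiv D_2$ on $X$, and since both are effective prime divisors this gives $D_1 = D_2$. I expect the main obstacle to be the well-definedness step: carefully justifying that the scheme-theoretic restriction $D\cap Q$ of a prime divisor $D$ to a general $Q$ is actually an \emph{integral} curve (not just an effective divisor with the right numerics), which requires either a Bertini-type irreducibility argument valid even though $D$ is not ample, or exploiting the explicit geometry of $(-1)$-divisors via Theorem \ref{thm:weylelem} to reduce to the model where $D$ is literally the exceptional divisor of a point blowup meeting a general anticanonical member.
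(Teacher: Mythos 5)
Your skeleton matches the paper's: adjunction gives $-K_Q\sim Q|_Q$, Lemma \ref{lem:intersectnum} gives $(D|_Q)^2=-1$ and $-K_Q\cdot D|_Q=1$, and injectivity is a lattice/rigidity statement. Your injectivity argument (injectivity of the restriction $N^1(X)_\R\to N^1(Q)_\R$, which sends $H\mapsto\ell_1+\ell_2$ and $E_i\mapsto e_i$, combined with rigidity) is fine and arguably more transparent than the paper's, which instead uses the exact sequence $0\to\OO_X(D-Q)\to\OO_X(D)\to\OO_Q(D)\to 0$ together with $H^0\big(X,\OO_X(D-Q)\big)=0$. One caveat: \q{both are effective prime divisors, hence equal} is not a valid step for merely numerically equivalent prime divisors in general; what you actually need is $h^0\big(X,\OO_X(D)\big)=1$, which holds by Proposition \ref{prop:crondivisors}(ii) since $D\in\mathcal{W}\cdot E_8$.

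The genuine gap is the one you flag yourself and then do not close: integrality of $\Gamma=D\cap Q$. The shortcut you assert --- \q{an effective class with arithmetic genus $0$ and self-intersection $-1$ on a surface with $-K$ nef is a smooth rational $(-1)$-curve} --- is false as a general principle: if $e$ is a $(-1)$-curve and $N$ a $(-2)$-curve with $e\cdot N=1$ on a weak del Pezzo surface, then $C=e+N$ satisfies $C^2=-1$, $-K\cdot C=1$, $p_a(C)=0$, yet is reducible. On $Q\cong X_{2,9}$ with very general points the statement is true, but it needs Remark \ref{rem:surfaceclosedeff}: write $D|_Q=\sum n_jC_j$ with $C_j$ irreducible; since $-K_Q$ is nef and $-K_Q\cdot D|_Q=1$, exactly one component $C_1$ appears with multiplicity one and $-K_Q\cdot C_1=1$, while every other component lies in $K_Q^\perp$ and hence (no $(-2)$-curves, $h^0(-mK_Q)=1$) equals the unique anticanonical elliptic curve $F$; writing $D|_Q=C_1+kF$ one computes $C_1^2=-1-2k$ and $p_a(C_1)=-k\geq 0$, forcing $k=0$. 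That would complete your route. The paper closes the gap differently and more geometrically: by the proof of Lemma \ref{lem:primdiv} there is a finite sequence of flops after which the strict transform $D'$ of $D$ is a $\pr^2$ with $\OO_{D'}(D')=\OO_{\pr^2}(-1)$, so the restriction of the anticanonical pencil to $D'$ is a pencil of lines, whence $D\cap Q$ is an integral rational curve for general $Q$, and the numerics of Lemma \ref{lem:intersectnum} then identify it as a $(-1)$-curve. Either repair works, but as written your proof does not establish the central point of the lemma.
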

\begin{proof}
	By the adjunction formula, we have \[-K_Q \sim (-K_X-Q)|_Q \sim Q|_Q.\]
	
	Let $D$ be a $(-1)$-divisor on $X$ and denote by $\Gamma$ the curve $D\cap Q$. On the surface $Q$, we have
	\begin{equation}\label{eq:-1curve}
	-K_Q\cdot \Gamma = Q|_Q\cdot D|_Q =1 \text{ and } \Gamma^2 = (D|_Q)^2=-1,
	\end{equation}
	since $D\cdot Q^2=1$ and $D^2\cdot Q=-1$ by Lemma \ref{lem:intersectnum}.
	
	Since $D$ is a $(-1)$-divisor, by the proof of Lemma \ref{lem:primdiv} there exists a finite sequence of flops $X\dashrightarrow X'$ such that the strict transform $D'$ of $D$ on $X'$ is isomorphic to $\pr^2$. Denote by $Q'$ the strict transform of $Q$ on $X'$. As $\cal{O}_{D'}(D') = \cal{O}_{\pr^2}(-1)$, we have that $|Q'|_{D'} \subset |Q'|_{D'}|$  is a pencil of smooth rational curves on $D'\simeq \pr^2$. Hence $|Q|_{D}$ is a pencil of integral rational curves;  in particular, $\Gamma$ is an integral rational curve. Together with (\ref{eq:-1curve}), we obtain that $\Gamma$ is a $(-1)$-curve on the surface $Q$; in particular, $\Gamma$ is a smooth rational curve. Thus $D|_Q$ is a $(-1)$-curve on the surface $Q$ and we obtain a well-defined map:
	\begin{align*}
	\{(-1)\text{-divisors on }X\}&\to \{(-1)\text{-curves on }Q\}\\
	D &\mapsto D|_Q.
	\end{align*}
	By the short exact sequence
	\[
	0\to {\cal O}_X(D-Q) \to {\cal O}_X(D)\to {\cal O}_Q(D)\to 0
	\]
	and since $H^0\big( X,{\cal O}_X(D-Q)\big)=0$, the map defined above is injective.
\end{proof}

In the following,  we will work with the real projective space 
$\R\pr^8 = \big( N^1(X)_{\R} \backslash \{ 0 \} \big) / \R^*$. For any class $x\in \Psef(X)$ seen as a non-zero vector in $ N^1(X)_{\R} \simeq \R^9$,
the ray spanned by the class $x$ intersects the unit sphere  $\mathbb{S}^8 \subset \R^9$ in a unique point. Thus, we can identify each ray with a point in the projective space $\R\pr^8$.
We denote by $x=(d;m_1,\ldots,m_8)$ an element in $\Psef(X) \backslash \{ 0 \}$ and by ${\bf x} = [d:m_1:\ldots:m_8]$ its corresponding point in $\R\pr^8$. 

\begin{prop}\label{prop:accpoint}
	 Let $A = \{w\cdot E_8 \mid w \in {\cal W}\}$. Then the only accumulation point of the set $\{{\bf x}\}_{x \in A} \subset \R\pr^8$ is the point corresponding to the class of $-\frac{1}{2}K_X$.
     In particular, denoting by $\widetilde{{\bf x}}$ the point of $\R\pr^8$
   corresponding to the ray spanned by the class of $-\frac{1}{2}K_X$, the set $\widetilde{A}\coloneqq \{ {\bf x} \}_{x \in A}\cup \{ \widetilde{{\bf x}} \}\subset \R\pr^8$ is closed.
\end{prop}

\begin{proof}
	Fix a very general member $Q\in |{-}\frac{1}{2}K_X|$. 
	Now since $Q$ is the blowup of a smooth quadric surface in $\pr^3$ at eight very general points, we can write 
	\[N^1(Q)=\Z \ell_1 \oplus \Z \ell_2 \oplus \Z e_1\oplus\dots\oplus \Z e_8 \cong \Z^{10},\]
	where $\ell_1$ and $\ell_2$ are respectively the pullbacks of the two rulings of the quadric surface in $\pr^3$. Analogously as for $X$, we identify the rays in $N^1(Q)_{\R}$ spanned by $(-1)$-curves and $-K_Q$ with their images in $\R\pr^9$.
	We have
	\begin{equation}\label{eq:restricttoQ}
	H|_Q\sim \ell_1+\ell_2 \text{ and } E_i|_Q\sim e_i  \text{ for } i\in\{1,\dots,8\}.
	\end{equation}

	By compactness of $\R\pr^8$, there exists an accumulation point of the set $\{ {\bf x}\}_{x\in A}\subset \R\pr^8$. In the following, we show that there is exactly one accumulation point. 
	
	Let $\{ {\bf x}_k\}_{k\in\N}$ be a converging subsequence of $\{ {\bf x}\}_{x\in A}$ with limit point ${\bf y}\in\R\pr^8$, let $D_k$ be a $(-1)$-divisor on $X$ corresponding to the class $x_k$ for each $k\in \Z_{\geq 0}$ and let $B$ be an $\R$-divisor on $X$ corresponding to the class $y$. We can write 
	\begin{equation}\label{eq:limit}
	B\sim aH - \sum_{i=1}^8 b_i E_i,  
	\end{equation}
	where $a,b_i\in\R$ for any $i\in\{1,\dots,8\}$.

	By (\ref{eq:restricttoQ}), the map in Lemma \ref{lem:restrictfixdiv} can be written as:
	\begin{align*}
	\{(-1)\text{-divisors on }X\}&\hookrightarrow\{(-1)\text{-curves on }Q\} \\
	D  \sim dH - \sum_{i=1}^8 m_i E_i &\mapsto D|_Q \sim d(\ell_1+\ell_2) - \sum_{i=1}^8 m_i e_i.
	\end{align*} 
Thus, transferring to $\R\pr^8$ respectively $\R\pr^9$ as above, this injection is the restriction of the embedding 
	\begin{align*}
	\R\pr^8 &\hookrightarrow\R\pr^9 \\
	[d:m_1:\dots:m_8] &\mapsto [d:d:m_1:\dots :m_8].
	\end{align*}
 The embedding is induced by a linear map of Euclidean spaces and is therefore continuous with respect to the quotient topology on $\R\pr^8$ and $\R\pr^9$, respectively.
	Since the sequence $\{ {\bf x}_k\}_{k\in\N}$ converges to the point ${\bf y}$, we obtain that the sequence $\{ D_k|_Q\}_{k\in\N}$ converges to $B|_Q$ in $\R\pr^9$. Since $Q$ is isomorphic to $\pr^2$ blown up at nine very general points, by Proposition \ref{prop:surfaceclosedeffcone} and Remark \ref{rem:surfaceclosedeff} we obtain $[B|_Q]\in\R_{>0}[-K_Q]$, and thus there exists $\lambda\in\R_{>0}$ such that 
	\[
	B|_Q \sim \lambda (2\ell_1+2\ell_2-\sum_{i=1}^8 e_i).
	\]
	Hence, by (\ref{eq:limit}) we have $a= 2\lambda$ and $b_i = \lambda$ for any $i\in\{1,\dots,8\}$ and thus the limit point $\textbf{y}$ corresponds to the ray spanned by the class $-\frac{1}{2}K_X$.
    The last statement follows immediately from the first statement.
\end{proof}

We would like to thank Artie Prendergast-Smith for helpful discussions leading to the following statement,
and the referee for suggesting an alternative proof.

\begin{prop}\label{prop:effconeclosed}
	The effective cone $\Eff(X)$ is closed.
\end{prop}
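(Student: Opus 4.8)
The plan is to show that $\Eff(X)$ coincides with the convex cone $\mathcal{C}$ generated by $\mathcal{W}\cdot E_8$ together with $-\tfrac12 K_X$, which by Theorem \ref{thm:effcone} equals $\Eff(X)$, and then argue that this convex hull is \emph{closed} directly, using the geometric input assembled above. The key structural fact is Proposition \ref{prop:accpoint}: after projectivising, the set of rays $\{\mathbf{x}\}_{x\in A}$, $A=\mathcal{W}\cdot E_8$, has a \emph{unique} accumulation point in $\mathbb{R}\pr^8$, namely the ray $\mathbf{k}$ spanned by $-\tfrac12 K_X$. So the picture is: infinitely many discrete rays accumulating only at $\mathbf{k}$, plus the ray $\mathbf{k}$ itself. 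I would show the convex hull of such a configuration is closed by the following general principle.

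First I would fix a hyperplane $N^1(X)_{\mathbb{R}}\supset \{L=1\}$ transverse to $\Psef(X)$ (e.g. $L=-K_X$, which is nef with $(-K_X)^3=0$ but still pairs positively with every nonzero pseudoeffective class since $\Psef(X)$ is contained in a half-space — one checks $-K_X\cdot C>0$ for all generators of $\NE(X)$ from Proposition \ref{prop:coneofcurves}, as $-K_X=4H-2\sum E_i$ gives $-K_X\cdot e_i=2$ and $-K_X\cdot\ell_{ij}=4-2-2=0$; hmm, that is zero, so I would instead pick $L$ ample, e.g. $L=H$, which satisfies $H\cdot e_i=0$ — also zero). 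So the honest choice is an \emph{ample} class $A_0$ (which exists since $X$ is projective) pairing strictly positively with every ray of $\NE(X)$, hence with every nonzero class of $\Psef(X)$; then slicing $\Eff(X)$ by $\{A_0=1\}$ gives a bounded convex set $P$, and $\Eff(X)$ is closed iff $P$ is closed. The slice of $\mathcal{C}$ is $\operatorname{conv}(\{\bar x : x\in A\}\cup\{\bar k\})$ where the barred points are the normalised representatives. By Proposition \ref{prop:accpoint} the set $S:=\{\bar x : x\in A\}$ has $\bar k$ as its unique accumulation point (in the affine slice, which is homeomorphic to the relevant chart of $\mathbb{R}\pr^8$), so $S\cup\{\bar k\}$ is a \emph{compact} set. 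The convex hull of a compact subset of a finite-dimensional vector space is compact, hence closed. Therefore $P=\operatorname{conv}(S\cup\{\bar k\})$ is closed, so $\Eff(X)=\mathcal{C}$ is closed.

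The one subtlety I would be careful about is the transition between $\mathbb{R}\pr^8$ and an affine slice: Proposition \ref{prop:accpoint} is phrased projectively, and I must ensure that all rays of $A$ and the ray $\mathbf{k}$ lie in a single affine chart where ``accumulation point'' has the same meaning. This is automatic once $A_0$ is ample: every $x\in A$ and $-\tfrac12 K_X$ are effective, hence pseudoeffective, hence $A_0\cdot x>0$, so all these rays meet $\{A_0=1\}$ and the normalisation $x\mapsto x/(A_0\cdot x)$ is a homeomorphism from the relevant subset of $\mathbb{R}\pr^8$ onto its image in $\{A_0=1\}$. Thus the unique-accumulation-point statement transfers verbatim, $S\cup\{\bar k\}$ is closed and bounded in a $9$-dimensional affine space, hence compact.

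The main obstacle is simply assembling the compactness argument cleanly — specifically checking that $S\cup\{\bar k\}$ is closed (which is exactly the content of Proposition \ref{prop:accpoint}, so this is handed to us) and then invoking the standard fact that the convex hull of a compact set in $\mathbb{R}^n$ is compact (Carathéodory's theorem plus compactness of the simplex of barycentric coefficients). There is no real analytic difficulty beyond what Proposition \ref{prop:accpoint} already provides; the work is in the bookkeeping of slices and in noting that $\Eff(X)=\mathcal{C}$ by Theorem \ref{thm:effcone} so that closedness of $\mathcal{C}$ is exactly what is wanted. I would conclude: since $\Eff(X)=\mathcal{C}$ and $\mathcal{C}$ is the cone over the compact convex set $\operatorname{conv}(S\cup\{\bar k\})$, the cone $\Eff(X)$ is closed, i.e. $\Eff(X)=\overline{\Eff}(X)$.
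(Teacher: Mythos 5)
Your proposal is correct, and it takes a genuinely different route from the paper. The paper also starts from Theorem \ref{thm:effcone}, but then proves closedness by identifying \emph{all} extremal rays of the closed cone $\Psef(X)$: it shows $\Psef(X)\subset K_X^{\leq 0}$, that the extremal rays in $K_X^{<0}$ are exactly the classes in $\mathcal{W}\cdot E_8$ (here Proposition \ref{prop:accpoint} enters, just as in your argument), and --- the genuinely hard extra step --- that $\Psef(X)\cap K_X^{\perp}$ is the single ray $\R_{\geq 0}\cdot(-\frac{1}{2}K_X)$, proved by minimising the function $[d:m_1:\ldots:m_8]\mapsto (2d^2-\sum m_i^2)/(2d^2+\sum m_i^2)$ on a compact slice and deriving a contradiction via a degree-decreasing Cremona transformation. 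You bypass that entire analysis: since $\Eff(X)$ is already known to equal the convex cone $\mathcal{C}$ on the explicit generating set $\mathcal{W}\cdot E_8\cup\{-\frac{1}{2}K_X\}$, and Proposition \ref{prop:accpoint} says this generating set is compact after projectivisation (its unique accumulation point $-\frac12 K_X$ already belongs to it), the standard facts that the convex hull of a compact set in $\R^n$ is compact (Carath\'eodory) and that the cone over a compact convex set missing the origin is closed finish the proof. This is shorter and arguably cleaner; what the paper's longer route buys is the additional structural information that $\Psef(X)\cap K_X^{\perp}$ is a single ray and that every extremal ray of $\Psef(X)$ off $K_X^{\perp}$ is a $(-1)$-divisor class, which is of independent interest. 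One small imprecision to fix: an ample class $A_0\in N^1(X)_{\R}$ is not itself a linear functional on $N^1(X)_{\R}$, so the slice $\{A_0=1\}$ should be taken with respect to, say, $D\mapsto D\cdot A_0^2$ (or any class in the interior of the dual cone of $\Psef(X)$), which is indeed strictly positive on $\Psef(X)\setminus\{0\}$ and makes the slice of $\Psef(X)$ compact; alternatively you can work on the unit sphere as the paper does with $\mathbb{S}^8$ and $\R\pr^8$. With that standard adjustment the bookkeeping you describe (the homeomorphism between the relevant chart of $\R\pr^8$ and the affine slice, hence compactness of $S\cup\{\bar{k}\}$, hence of $\operatorname{conv}(S\cup\{\bar{k}\})$) goes through verbatim.
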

\begin{proof}
By Proposition \ref {prop:accpoint} and with the same notation therein, the set $\widetilde{A}$ is closed, and hence compact in $\R\pr^8$.
Since $\Psef(X)$ is a closed pointed cone, its dual cone $\Psef(X)^\vee$ is full-dimensional, and hence, has a non-empty interior. 
We pick $\ell\in \operatorname{Int}(\Psef(X)^\vee)$ and we define
\begin{align*}
    \varphi\colon N^1(X)_{\R} &\to \R\\
    x&\mapsto x\cdot \ell.
\end{align*}
Then
\begin{equation}\label{eq:pos_pairing}
\varphi(x) > 0 \text{ for all }x \in \Psef(X)\backslash\{0\}.
\end{equation}
Let
\[
L\coloneqq\{x\in N^1(X)_{\mathbb R}\mid \varphi(x)=1\}
\]
and let
\[
C\coloneqq \Psef(X)\cap L.
\]
Then $C$ is a convex compact subset of $N^1(X)_{\R}$.

By \eqref{eq:pos_pairing}, every ray in 
$ A\cup \R_{\geq 0}\left[-\frac{1}{2}K_X\right]=\cup_{w\in \cal{W}}\R_{\geq 0}[w\cdot E_8] \ \cup \R_{\geq 0}\left[-\frac{1}{2}K_X\right]$
meets $L$ at a unique point; let us denote by $S$ its intersection with $L$.
Note that the map
 \begin{align*}
     \alpha\colon \Psef(X)\backslash \{0\} &\to L\\ 
       x  &\mapsto \frac{x}{\varphi(x)}
\end{align*}
   is continuous and constant on any positive ray, thus $\alpha$ factors through the projectivisation of $\Psef(X)\backslash \{0\}$.
   Since $S$ is the image of the compact set $\widetilde{A}$ under the induced continuous map, $S$ is compact in $L$, and thus also in $N^1(X)_{\R}$.

By Theorem \ref{thm:effcone},
we have
\[
\Eff(X)\cap L=\operatorname{conv}(S).
\]
Since $S$ is compact in $N^1(X)_{\R}$, it follows from Carathéodory's theorem that $\operatorname{conv}(S)$ is also compact. In particular, $\operatorname{conv}(S)$ is closed and it remains to show that $\Eff(X) = \R_{\geq 0} \cdot \mathrm{conv}(S)$ is closed.

Consider a sequence $y_n \coloneqq \lambda_n x_n$ with $\lambda_n \in \R_{\geq 0}$ and $x_n \in \mathrm{conv}(S)\subset L$ such that $\lim_{n \to \infty} y_n = y\in N^1(X)_{\R}$. Since $\varphi$ is linear and continuous, we infer that
\[
\varphi(y_n) = \varphi(\lambda_n x_n) = \lambda_n \varphi(x_n) = \lambda_n \geq 0
\]
converges to $\lambda\coloneqq \varphi(y) \geq 0$. 
Moreover, since $\conv(S)$ is compact, after passing to a convergent subsequence, we may assume $\lim_{n \to \infty} x_n = x \in \conv(S).$
Consequently, 
\[
y = \lim_{n \to \infty} y_n = \lim_{n \to \infty} \lambda_n x_n = \lambda x \in \R_{\geq 0} \cdot \mathrm{conv}(S),
\]
which finishes the proof. 
\end{proof}

\begin{proof}[Proof of Theorem \ref{main:eff}]
    It follows from Theorem \ref{thm:effcone} and Proposition \ref{prop:effconeclosed}.
\end{proof}

\section{The movable cone} 
We recall that $X$ denotes the blowup of $\pr^3$ at eight very general points. In this section we prove Theorem \ref{main:mov}.  The first part of Theorem \ref{main:mov}(ii) is a direct consequence of the fact that the infinite Weyl group preserves the effective movable cone $\MovE(X) =   \MovC(X) \cap \Eff(X)$:

\begin{lem}\label{lem:notratpol}
    The cone $\MovE(X)$ is not rational polyhedral.
\end{lem}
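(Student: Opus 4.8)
The plan is to show $\MovE(X)$ is not rational polyhedral by exhibiting infinitely many distinct extremal rays, obtained as the $\mathcal{W}$-orbit of a single movable class. First I would recall from Remark \ref{rem:weylaction} that, since the eight points are very general, the action of the infinite Weyl group $\mathcal{W}$ on $N^1(X)_{\R}$ preserves both $\Eff(X)$ and $\MovC(X)$, hence preserves $\MovE(X) = \MovC(X) \cap \Eff(X)$. The key point is then to produce one class $D_0$ lying on an extremal ray of $\MovE(X)$ whose $\mathcal{W}$-orbit is infinite; applying elements of $\mathcal{W}$ (which are linear automorphisms preserving the cone) sends extremal rays to extremal rays, so $\mathcal{W}\cdot D_0$ gives infinitely many extremal rays and a rational polyhedral cone cannot have infinitely many extremal rays.

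For the explicit choice, I would take a nef class that generates an extremal ray of $\Nef(X) = \NefE(X)$ — for instance one of the classes $2H - \sum_{j\in J} E_j$ with $|J| = m$ from Lemma \ref{lem:nefcone}, say $m = 7$, i.e. $D_0 = 2H - \sum_{i=1}^7 E_i = f_7$, one of the fundamental weights. Since $\Nef(X) \subset \MovC(X)$ and $\Nef(X)\subset \Eff(X)$, this class lies in $\MovE(X)$, and I would need to check it spans an extremal ray of $\MovE(X)$, not merely of $\Nef(X)$. This is where the decomposition $\MovE(X) = \bigcup_\varphi \varphi^*(\NefE(X'))$ from Proposition \ref{prop:main} (equivalently Corollary \ref{cor:decompmovcone}) helps: an extremal ray of $\NefE(X)$ that does not lie in the interior of any translate $\varphi^*(\NefE(X'))$ remains extremal in the union; alternatively one argues directly using the bilinear form $(\cdot,\cdot)$ — a class $x$ with $(x,x) < 0$ spans an extremal ray of $\Psef(X)$ (as in the proof of Proposition \ref{prop:effconeclosed}, Claim 2), and one computes $(f_7, f_7)$ to see whether this applies, or else identifies $f_7$ with a $(-1)$-type class whose orbit consists of extremal rays.

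The remaining step is to confirm the orbit $\mathcal{W}\cdot D_0$ is infinite. Since $\mathcal{W}$ is the Weyl group of the affine Dynkin diagram $T_{2,4,4}$, it is an infinite group, and I would check that the stabiliser of the ray spanned by $D_0$ has infinite index — equivalently, that $D_0$ is not fixed up to scaling by all of $\mathcal{W}$. Because $(D_0, -\tfrac12 K_X) \neq 0$ while $\mathcal{W}$ fixes $-\tfrac12 K_X$ and preserves $(\cdot,\cdot)$, and because $(D_0,D_0)$ together with $(D_0,-\tfrac12 K_X)$ cannot both be preserved by infinitely many rays through a finite set, the orbit of the ray must be infinite (one can also quote the concrete formulas for $s_0$ acting on the standard basis to generate visibly distinct classes $s_0 s_i s_j \cdots (D_0)$ of unbounded degree). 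The main obstacle I anticipate is the bookkeeping in verifying that $D_0$ genuinely spans an extremal ray of the \emph{movable} cone and that its orbit avoids collapsing onto finitely many rays; the cleanest route is probably the self-intersection criterion $(D_0,D_0) < 0 \Rightarrow$ extremal in $\Psef(X) \supset \MovE(X)$, combined with the invariance of the form under $\mathcal{W}$, which simultaneously handles extremality of every class in the orbit.
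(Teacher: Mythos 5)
Your overall strategy (exhibit infinitely many extremal rays of $\MovE(X)$ as a $\mathcal{W}$-orbit) is genuinely different from the paper's, but the step you yourself flag as the main obstacle --- verifying that $D_0$ spans an extremal ray of $\MovE(X)$ --- is a real gap, and neither of the two routes you propose closes it. The self-intersection route is a dead end: for $D_0=f_7=2H-\sum_{i=1}^7E_i$ one computes $(f_7,f_7)=4\cdot 2-7=1>0$, so the criterion simply does not apply; worse, by Theorem \ref{main:eff} the extremal rays of $\Psef(X)$ are exactly the classes in $\mathcal{W}\cdot E_8$ together with $-\frac12K_X$, and the former are rigid $(-1)$-divisors (hence not movable) while the latter is $\mathcal{W}$-fixed (hence its orbit is a single ray). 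So no class can simultaneously be movable, extremal in $\Psef(X)$, and have infinite orbit: the implication ``extremal in $\Psef(X)\supset\MovE(X)$'' can never certify what you need. Your second route also fails: the principle that an extremal ray of $\NefE(X)$ not lying in the interior of any translate $\varphi^*\bigl(\NefE(X')\bigr)$ remains extremal in the union is false --- a ray on the common face of two adjacent chambers is typically not extremal for their union (two adjacent simplicial cones in the plane glued along a ray already give a counterexample). Without extremality, an infinite orbit inside $\MovE(X)$ proves nothing, since any cone of dimension at least two contains infinitely many rays.

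The paper avoids identifying any extremal ray at all. It argues by contradiction: if $\MovE(X)$ were rational polyhedral, $\mathcal{W}$ would permute the finitely many primitive integral generators of its extremal rays, so their sum would be a $\mathcal{W}$-invariant class in the interior of $\MovE(X)$, hence big; but the only $\mathcal{W}$-fixed classes are the real multiples of $K_X$ (the roots span $K_X^{\perp}$ and the intersection form is nondegenerate), and $-K_X$ is not big. If you wish to keep your approach, you would have to genuinely determine an extremal ray of $\MovE(X)$ with infinite orbit, which amounts to analysing the chamber structure $\MovE(X)=\mathcal{W}\cdot\Pi$ established later in the paper --- considerably more work than the three-line contradiction above.
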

\begin{proof}
    Suppose by contradiction that $\MovE(X)$ is a rational polyhedral cone spanned by finitely many extremal rays and denote by $D_1,\dots, D_k$ the primitive classes generating these extremal rays, i.e.\ each $D_i$ is the smallest Cartier divisor class generating an extremal ray. Since the Weyl group ${\cal W}$ preserves the cone $\MovE(X)$, any $w\in{\cal W}$ permutes the classes $D_1,\dots, D_k$. Hence the class $\sum_{i=1}^k D_i$ is fixed by the group ${\cal W}$. Moreover, $\sum_{i=1}^k D_i$ is contained in the interior of $\MovE(X)$ and thus is big. But any class fixed by ${\cal W}$ is proportional to the class of $-\frac{1}{2}K_X$ which is not big, we obtain a contradiction.
\end{proof}

The proof of the remaining part of Theorem \ref{main:mov} is more involved and we split it in several steps. First, we show that we can cover the rational points of $\Mov(X)$ by ${\cal W}$-translates of a rational polyhedral cone $\Pi$, see Theorem \ref{thm:coverMov}. Then, to deduce that the whole cone $\Mov(X)$ is contained in ${\cal W} \cdot \Pi  \coloneqq \bigcup_{w \in {\cal W}} w \cdot \Pi$, we prove that  ${\cal W} \cdot \Pi$ is convex, see Theorem \ref{thm:wPiconvex}. Finally, the existence of a fundamental domain for the action of ${\cal W}$ on $\MovE(X)$ follows from Theorem \ref{thm:looi}.
\begin{dfn}\label{def:conePi}

    Throughout this chapter, we denote by $\Pi$ the rational polyhedral cone in $N^1(X)_{\R}$ spanned by the classes of 
 \begin{itemize}[leftmargin=*]
	\item $H$, 
 \item $H-E_i$ for $1 \leq i \leq 3$,  
 \item $H- E_i - E_j$ for  $1 \leq i < j \leq 3$, 
 \item $L_k = 2H - \sum_{i=1}^{k} E_i$ for $4 \leq k \leq 8$, 
 \item $M_\ell = 3H -3E_1 - \sum_{i=2}^\ell E_i$ for $4\leq \ell\leq 8$. 
 \end{itemize} 
\end{dfn}
The above generators have the following geometric interpretation. The first two types of classes are nef by Lemma \ref{lem:nefcone}. The third type of classes are not nef: each class has negative intersection with the strict transform of the line through $p_i,p_j \in \pr^3$. The class $L_k$ is nef by Lemma \ref{lem:nefcone} and corresponds to the strict transform of a quadric through $k$ blown up points in $\pr^3$. The class $M_\ell$ corresponds to the strict transform of a cone with vertex $p_1$ over a plane cubic through $\ell-1$ points, where the $\ell-1$ points are the projection of $p_2,\dots, p_\ell$ from $p_1$ to the plane. Note that $M_\ell$ is not nef, as it has negative intersection with the strict transform of the line through $p_1, p_i\in \pr^3$, where $i\in \{ 2,\dots, \ell\}$.

\begin{lem}\label{lem:standardinPi}
   Let $\Pi$ be as above, and  
 let $D=(d;m_1,\dots,m_8)$ be a Cartier divisor class in standard form such that $d\geq m_i\geq 0$ for all $i\in\{1,\dots,8\}$. Then $D\in\Pi$. 
\end{lem}

\begin{proof}
Let $D=(d;m_1,\dots,m_8)$ be a Cartier divisor class in standard form such that $d\geq m_i\geq 0$ for all $i\in\{1,\dots,8\}$. Then $2d\geq \sum_{i=1}^4 m_i$ and $d\geq m_1\geq\dots\geq m_8\geq 0$.

\smallskip
    {\em Step 1.} We first consider the case when $d\geq m_1+m_4$.
 
 By Proposition \ref{prop:standform}, we can write
	\[
D=(d-2m_4;m_1-m_4,m_2-m_4,m_3-m_4)
+ \sum_{i=4}^{8} c_i \cdot (2;1^i)
\]
with $c_i \geq 0$.

	 Since every divisor class $(2;1^i)$ for $i \geq 4$ is contained in $\Pi$, it is enough to show that for any divisor class $D$  of the form:
	\[
	D = (d; m_1,m_2,m_3),
	\]
	where $2d \geq m_1+m_2+m_3$ and $d\geq m_1\geq m_2 \geq m_3\geq 0$, we have $D \in \Pi$. Note that we can assume $d >0$, as $d =0$ implies that $D = 0$.

	We show the following stronger statement by induction on $d$:
	
 {\em Claim.} Any class 
	\[
	D = (d; m_1,m_2,m_3)
	\]
  with $d>0$, $2d \geq m_1+m_2+m_3$  and  $0 \leq m_i \leq d$ is contained in the cone $\Pi$.
	
\smallskip
	For $d =1$, the only possible classes are exactly the generators of $\Pi$ of degree 1. 
	Now let $d > 1$. We distinguish the following cases.
	\begin{itemize}[leftmargin=*]
	\item If all $m_i$ are zero, then $D$ is a multiple of $H$ which is in $\Pi$. 

	\item If there exists exactly one $j \in \{1,2,3\}$ so that $m_j$ is non-zero, then $D$ is a  non-negative combination of $H$ and $H-E_j$ which are both in $\Pi$.
	
	\item Now let us consider the case where at least two $m_j$ are non-zero. As the generators of degree 1 of $\Pi$ are invariant under permutations of $E_1,E_2$, and $E_3
    $, we may assume that $m_3 = \min\{m_i \mid i=1,2,3\}$. Then $m_1 > 0$, $m_2 > 0$ and $m_3 < d$ since $2d \geq m_1 + m_2 + m_3$. Hence the class
	\[ 	
	(d'; m_1',m_2',m_3') = 
		(d-1; m_1-1,m_2-1,m_3)
	\]
    satisfies $2d' \geq \sum_{i=1}^{3} m_i'$ and $0 \leq m_i' \leq d'$.

     We remark that $m_1-1$ or $m_2-1$ can be strictly smaller than $m_3$, which explains the use of  the stronger statement here. 
    Consequently, 
    \[
   dH - \sum_{i=1}^3 m_i E_i =  \underbrace{\big( (d-1)H - \sum_{i=1}^2(m_i-1) E_i  - E_3\big)}_{\in \Pi \text{ by induction }} + \underbrace{\big( H -E_1-E_2 \big)}_{\in \Pi}. 
    \]
    is contained in $\Pi$. 
    	\end{itemize}
This proves the claim.

\smallskip
{\em Step 2.} Now let us consider the case $d < m_1 + m_4$. We proceed as in the proof of Theorem \ref{main:eff}. Let $t_{i,j}\coloneqq m_i+m_j-d$ for $1\leq i<j \leq 8$. Then $d<m_1 + m_4$ is equivalent to 
$t_{1,4}>0.$
This implies that $d \geq 3$ and  $m_1\geq m_2+2 \geq 3$.  
Let $a = \max\{i \mid m_i \neq 0\}$. As $d<m_1 + m_4$, we have $a \geq 4$. We consider 
\[
(d';m_1',\ldots, m_{a'}')  \coloneqq
(d;m_1,\ldots,m_a) - (3;3,1^{a-1}),
 \]
 where $a' = \max\{i \mid m'_i \neq 0\}$.
For any $4 \leq a \leq 8$, the divisor class $(3;3,1^{a-1})$ is contained in $\Pi$.
So it is enough to show that $(d';m_1',\ldots, m_8')   \in \Pi$. 

We obtain that $(d';m'_1,\dots,m'_{a'})$ is in standard form and $d'\geq m'_i> 0$ for any $i\in\{1,\dots,a'\}$.
Furthermore, setting $t'_{1,4}\coloneqq m'_1+m'_4-d'$,  we have $t'_{1,4} = t_{1,4}-1$. If $t'_{1,4}>0$, since $(d';m'_1,\dots,m'_{a'})$ is in standard form, we can start our above procedure in {\em Step 2} again and do this until $t'_{1,4}=0$ for some $(d';m'_1,\dots,m'_{a'})$. We are now under the assumption in {\em Step 1} and thus $(d';m'_1,\dots,m'_{a'})\in \Pi$.
\end{proof}

\begin{thm}\label{thm:coverMov}
	Let $\Pi$ be the rational polyhedral cone in $N^1(X)_{\R}$ defined as in Lemma \ref{lem:standardinPi}.
 Then
 \[
 \{ D\in\Mov(X)\mid D \text{ is a rational class}\} \subset {\cal W} \cdot \Pi =\bigcup_{w \in {\cal W}} w \cdot \Pi \subset \MovE(X).
 \]
\end{thm}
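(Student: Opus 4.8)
The strategy is to reduce an arbitrary rational movable class to one that is amenable to Lemma~\ref{lem:standardinPi}, using the Weyl group to put it in standard form. The right-hand inclusion ${\cal W}\cdot\Pi\subset\MovE(X)$ should be the easy part: every generator of $\Pi$ listed in Lemma~\ref{lem:standardinPi} is (a) effective and (b) movable, so $\Pi\subset\MovE(X)$; indeed the classes $H$, $H-E_i$, $H-E_i-E_j$ with $i,j\le 3$, and $L_k=2H-\sum_{i=1}^k E_i$ are nef (they lie in the cone of Lemma~\ref{lem:nefcone}), while $M_\ell=3H-3E_1-\sum_{i=2}^\ell E_i$ is in standard form with nonnegative multiplicities, so by Proposition~\ref{prop:baselocusstandardsystem} the linear system ${\cal L}(3;3,1^{\ell-1})$ has no fixed divisor, hence $M_\ell\in\MovC(X)$. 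Since $\MovC(X)$ is a cone and $\Eff(X)$ is a cone, $\Pi\subset\MovE(X)$. Because ${\cal W}$ preserves both $\Eff(X)$ and $\MovC(X)$ (Remark~\ref{rem:weylaction}), and hence $\MovE(X)$, we get $w\cdot\Pi\subset\MovE(X)$ for every $w\in{\cal W}$, giving the second inclusion.

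For the first inclusion, I would take a rational class $D\in\Mov(X)$; after clearing denominators I may assume $D=dH-\sum m_iE_i$ is an integral Cartier class lying in the interior of the movable cone, in particular effective. By Remark~\ref{rem:standform} there is an element $w\in{\cal W}$ such that $w\cdot D=(d';m_1',\dots,m_8')$ is in standard form, i.e.\ $m_1'\ge\cdots\ge m_8'$ and $2d'\ge\sum_{i=1}^4 m_i'$; moreover $w\cdot D$ is again a class in $\Mov(X)$, since ${\cal W}$ preserves $\MovC(X)$, and it is effective. Now I need to know that $w\cdot D$ satisfies the hypothesis $d'\ge m_i'\ge 0$ of Lemma~\ref{lem:standardinPi}. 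The nonnegativity $m_i'\ge 0$ should follow from the fact that a movable effective class cannot have a negative coefficient along an exceptional divisor $E_i$ (intersecting with a covering family of lines $e_i$ in $E_i$, or invoking that $E_i$ is not a fixed component). The bound $d'\ge m_i'$ is the heart of the matter: a movable class has nonnegative intersection with every curve class spanning an extremal ray of $\overline{\NE}(X)$, in particular with $\ell_{ij}=h-e_i-e_j$, giving $d'\ge m_i'+m_j'\ge m_i'$ whenever at least two multiplicities are positive; the degenerate cases (zero or one nonzero $m_i'$) are handled directly. Once $d'\ge m_i'\ge 0$ is established, Lemma~\ref{lem:standardinPi} gives $w\cdot D\in\Pi$, hence $D\in w^{-1}\cdot\Pi\subset{\cal W}\cdot\Pi$.

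The main obstacle I anticipate is the verification that the standard-form representative of a movable class automatically satisfies $d'\ge m_i'$; one must be careful that reducing to standard form via ${\cal W}$ genuinely lands in the regime where Lemma~\ref{lem:standardinPi} applies, rather than merely in the Weyl chamber. The cleanest route is to observe that movability is equivalent to nonnegative pairing against the generators $e_i$ and $\ell_{ij}$ of $\overline{\NE}(X)=\NE(X)$ (Proposition~\ref{prop:coneofcurves}), a condition preserved by $w$ because the corresponding isomorphism in codimension one $\varphi_w$ (Theorem~\ref{thm:weylelem}) identifies movable cones; thus $w\cdot D\in\Mov(X_w)$, and the curve-intersection inequalities $d'\ge m_i'+m_j'$ hold on the nose. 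A secondary subtlety is that $\Mov(X)$ is the \emph{interior} of $\MovC(X)$, so strict inequalities may appear; but since we only need containment in the closed cone $\Pi$ and $\Pi$ is itself defined by the same type of inequalities, passing to the closure causes no difficulty. I would close by noting that $\Pi\subset\MovE(X)$ combined with ${\cal W}$-invariance of $\MovE(X)$ yields ${\cal W}\cdot\Pi\subset\MovE(X)$, completing the chain of inclusions.
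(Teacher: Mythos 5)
Your overall architecture is the same as the paper's: reduce a rational class to an integral one, put it in standard form via Remark~\ref{rem:standform}, verify the hypotheses $d'\ge m_i'\ge 0$ of Lemma~\ref{lem:standardinPi}, and get the reverse inclusion from the ${\cal W}$-invariance of $\MovE(X)$. The treatment of $m_i'\ge 0$ (the curves $e_i$ cover $E_i$, so a negative multiplicity forces $E_i$ to be a fixed component) is exactly the content of Remark~\ref{rem:standardmov}.

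However, your verification of $d'\ge m_i'$ rests on a false principle. You assert that a movable class has nonnegative intersection with every curve class spanning an extremal ray of $\overline{\NE}(X)$, in particular with $\ell_{ij}=h-e_i-e_j$, and even state that movability is \emph{equivalent} to nonnegative pairing against the $e_i$ and $\ell_{ij}$. That condition characterises \emph{nef} classes (it is dual to Proposition~\ref{prop:coneofcurves}), not movable ones: a movable divisor may well be negative on a rigid curve such as $\ell_{ij}$, since $\ell_{ij}\subset\Bs|D|$ is a codimension-two condition and does not obstruct movability. Indeed the generator $M_8=(3;3,1^7)$ of $\Pi$ is movable but satisfies $M_8\cdot\ell_{12}=-1$, so the inequality $d'\ge m_1'+m_2'$ you rely on already fails for classes the theorem must cover; and if your equivalence held, one would get $\Mov(X)\subset\Nef(X)$, contradicting Lemma~\ref{lem:notratpol} together with Lemma~\ref{lem:nefcone}. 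The inequality you actually need, $d'\ge m_i'$, is true but for a different reason: it follows from effectiveness alone, by pairing $w\cdot D$ with the class $h-e_i$ of the strict transform of a \emph{general} line through $p_i$, which moves in a family covering $X$ (this is how Remark~\ref{rem:standardmov} and the proof of Theorem~\ref{thm:effcone} obtain $d\ge m_i$). With that substitution your argument closes; as written, the step is not justified.
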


\begin{proof}
We first show the second inclusion. Clearly, $\Pi \subset \MovE(X)$ as we can easily verify that every generator of $\Pi$ is a movable effective divisor. Moreover, for a class $D \in \MovE(X)$ we have $w\cdot D \in \MovE(X)$ for any $w \in {\cal W}$ by Theorem \ref{thm:weylelem} and Remark \ref{rem:weylaction}.

For the first inclusion, let us consider a Cartier divisor class $D\in\Mov(X)$. Then by Remark \ref{rem:standform} there exists a divisor class $D_s=(d;m_1,\dots,m_8)$ in standard form and an element $w \in {\cal W}$ such that $D_s =  w \cdot D$. Moreover, by Remark \ref{rem:standardmov} we have $d\geq m_i\geq 0$ for all $i\in\{1,\dots,8\}$. Thus by Lemma \ref{lem:standardinPi}, we obtain
   \[ 
   D =  w^{-1}\cdot D_s \in {\cal W} \cdot \Pi.
   \]
Now let $D\in\Mov(X)$ be a $\Q$-divisor class. Then for sufficiently large and divisible $m\geq 1$, we have that $mD$ is an effective Cartier class in $\Mov(X)$. Thus $D\in {\cal W}\cdot \Pi$.
	\end{proof}

To cover the whole cone $\Mov(X)$ by ${\cal W}$-translates of the rational polyhedral cone $\Pi$, we need to show that ${\cal W}\cdot \Pi$ is convex. To do this, we use the fact that the Tits cone $ T = \bigcup_{w \in {\cal W}} w\cdot\overline{C}$ is convex (see Subsection \ref{subsec:weylgroup}) and we show that ${\cal W}\cdot \Pi$ is precisely the intersection of the Tits cone $T$ with a convex closed set.
 
 \begin{lem}\label{lem:wPiinT}
 	We have  ${\cal W}\cdot \Pi \subset T$. 
 \end{lem}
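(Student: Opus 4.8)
The statement to prove is $\mathcal{W}\cdot\Pi\subset T$, where $T=\bigcup_{w\in\mathcal{W}}w\cdot\overline{C}$ is the Tits cone and $\Pi$ is the rational polyhedral cone from Lemma \ref{lem:standardinPi}. Since $T$ is $\mathcal{W}$-invariant by construction, it suffices to show that $\Pi\subset T$, and since $T$ is a convex cone (as recalled in Subsection \ref{subsec:weylgroup}) and $\Pi$ is generated by finitely many classes, it is enough to check that each of the generators
\[
H,\quad H-E_i\ (1\le i\le 3),\quad H-E_i-E_j\ (1\le i<j\le 3),\quad L_k=2H-\textstyle\sum_{i=1}^k E_i\ (4\le k\le 8),\quad M_\ell=3H-3E_1-\textstyle\sum_{i=2}^\ell E_i\ (4\le\ell\le 8)
\]
lies in $T$. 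This is the whole content of the lemma; the remaining work is entirely computational.

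\emph{Key steps.} First I would use the characterisation of $\overline{C}$ from \eqref{eq:fundchamber}: a class $dH-\sum m_iE_i$ lies in $\overline{C}$ iff $2d\ge m_1+m_2+m_3+m_4$ and $m_1\ge m_2\ge\dots\ge m_8$. Several generators already lie in $\overline{C}$ directly: $H$ (all $m_i=0$); $L_4,\dots,L_8$ (here $d=2$, the $m_i$ are a weakly decreasing string of $1$'s and $0$'s, and $2d=4\ge\sum_{i=1}^4 m_i$ since at most four of them equal $1$); and $H-E_1$, $H-E_1-E_2$, $H-E_1-E_2-E_3$ (weakly decreasing, and $2\ge$ the relevant partial sum). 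For the generators that are \emph{not} already in $\overline{C}$ — namely $H-E_2$, $H-E_3$, $H-E_2-E_3$, $H-E_1-E_3$, and each $M_\ell$ — I would exhibit an explicit $w\in\mathcal{W}$ (a product of the simple reflections $s_0,\dots,s_7$) carrying the class into $\overline{C}$. For the first four this is just a permutation of the $E_i$'s (an element of the $S_8$ generated by $s_1,\dots,s_7$) reordering the multiplicities into weakly decreasing order, after which the class sits in $\overline{C}$ by the same check as above. For $M_\ell=3H-3E_1-\sum_{i=2}^\ell E_i$: here $2d=6<3+1+1+1=6$? No, $6\ge 6$, and the multiplicities $3,1,\dots,1,0,\dots,0$ are weakly decreasing, so in fact $M_\ell\in\overline{C}$ already. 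So I would simply verify the two inequalities of \eqref{eq:fundchamber} for every generator in turn; each is a one-line arithmetic check.

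\emph{Main obstacle.} There is essentially no obstacle: the argument is a finite inspection using \eqref{eq:fundchamber} plus convexity of $T$. The only point requiring a moment's care is recording, for those few generators whose multiplicities are not a priori sorted (the $H-E_i$ and $H-E_i-E_j$ with indices not of the form $\{1,\dots,k\}$), that applying the appropriate permutation $w\in\langle s_1,\dots,s_7\rangle$ lands them in $\overline{C}$ — and then noting $w^{-1}\cdot(\text{that class})\in w^{-1}\cdot\overline{C}\subset T$. Once each generator is placed in some $w\cdot\overline{C}\subset T$, convexity of $T$ gives $\Pi\subset T$, and $\mathcal{W}$-invariance of $T$ upgrades this to $\mathcal{W}\cdot\Pi\subset T$, completing the proof.
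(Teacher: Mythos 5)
Your proof is correct and follows essentially the same route as the paper: every generator of $\Pi$ is (possibly after applying a permutation in $\langle s_1,\dots,s_7\rangle$) in standard form and hence lies in $\overline{C}$ by \eqref{eq:fundchamber}, so $\Pi\subset T$ by convexity of $T$, and the $\mathcal{W}$-invariance of $T$ then gives $\mathcal{W}\cdot\Pi\subset T$. One small slip: $H-E_1-E_2-E_3$ is not a generator of $\Pi$ (the generators of that shape are $H-E_i-E_j$ with only two indices), and in fact it does \emph{not} lie in $\overline{C}$, since $2d=2<3=\sum_{i=1}^{4}m_i$; as this class plays no role, the verification for the actual generators is unaffected.
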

 \begin{proof}
 	Let us first show that $\Pi \subset T$. Since $\Pi$ is a rational polyhedral cone and $T$ is convex, it is enough to show that any generator of $\Pi$ is in $T$. Recall from the characterization of $\overline{C}$ in \eqref{eq:fundchamber} that all generators of $\Pi$ which are in standard form are contained in $\overline{C}$, and thus clearly in $T$. For each generator $D$ of $\Pi$ which is not in standard form, by Remark \ref{rem:standform} there exists an element of $w \in {\cal W}$ such that $w\cdot D$ is in standard form, hence $D \in w^{-1}\cdot\overline{C} \subset T$. 
 	
 	Since $T$ is invariant under the action of ${\cal W}$, we have $w \cdot \Pi \subset T$ for any $w \in {\cal W}$ and this shows the statement.
 \end{proof}

 \begin{lem}\label{lem:movinT}
 	We have $ \Mov(X) \subset T$.
 \end{lem}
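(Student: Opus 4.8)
The plan is to deduce this at once from the two facts just established, together with convexity of the Tits cone. By Theorem~\ref{thm:coverMov} and Lemma~\ref{lem:wPiinT}, every \emph{rational} class lying in $\Mov(X)$ belongs to $\mathcal W\cdot\Pi\subseteq T$. The only remaining point is to pass from the rational classes of $\Mov(X)$ to all of $\Mov(X)$, and this is exactly where openness of $\Mov(X)$ is used: since $\Mov(X)$ is a non-empty open cone in $N^1(X)_{\R}$ and $N^1(X)_{\Q}$ is dense in $N^1(X)_{\R}$, the rational classes in $\Mov(X)$ are dense in $\Mov(X)$.

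Concretely, I would argue as follows. Fix $D\in\Mov(X)$. Because $\Mov(X)$ is open, there is a full-dimensional simplex $\sigma\subseteq\Mov(X)$ with $D$ in its interior; perturbing each vertex of $\sigma$ slightly to a rational class keeps the vertices inside the open set $\Mov(X)$ and, since the condition that $D$ lie in the interior of the simplex spanned by given points is open in those points, it still leaves $D$ in the interior of the resulting simplex. Thus $D$ is a convex combination of finitely many rational classes of $\Mov(X)$, each of which lies in $T$ by the previous paragraph; as $T$ is convex (Subsection~\ref{subsec:weylgroup}), we conclude $D\in T$, and hence $\Mov(X)\subseteq T$. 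Alternatively, the last step can be phrased as: $\Mov(X)\subseteq\overline T$ by density, and since $T$ is convex its closure has the same interior as $T$, so $\Mov(X)=\Int\big(\Mov(X)\big)\subseteq\Int\big(\overline T\big)=\Int(T)\subseteq T$ (here $T$ has non-empty interior because it contains the full-dimensional cone $\Pi$).

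I do not anticipate any real obstacle: the inputs are exactly Theorem~\ref{thm:coverMov}, Lemma~\ref{lem:wPiinT}, the convexity of $T$, and elementary point-set topology of convex sets. The only subtlety worth stating explicitly is the density/convexity step above, which is precisely the reason the lemma is formulated for the open cone $\Mov(X)$ rather than only for its rational classes.
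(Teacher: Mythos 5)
Your proof is correct and follows essentially the same route as the paper: the paper likewise uses openness of $\Mov(X)$ to write an arbitrary $D\in\Mov(X)$ as a non-negative combination of Cartier classes in $\Mov(X)$, places each of these in ${\cal W}\cdot\Pi\subset T$ via Theorem~\ref{thm:coverMov} and Lemma~\ref{lem:wPiinT}, and concludes by convexity of $T$. Your write-up merely spells out the density/perturbation step in more detail.
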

 \begin{proof}
 	Let $D \in \Mov(X) $. Since $\Mov(X)$ is open, we can write
 	\[D = \sum_{i=1}^k \lambda_i D_i,\]
 	where $\lambda_i \in \R_{\geq 0}$ and $D_i\in\Mov(X)$ is a Cartier divisor class for each $i\in\{1,\dots,k\}$. 
  
  Now for each $i\in\{1,\dots,k\}$, since $D_i\in\Mov(X)$ is Cartier, we have $D_i \in \bigcup_{w \in {\cal W}} w\cdot \Pi$ by Theorem \ref{thm:coverMov} and thus $D_i \in T$ by Lemma \ref{lem:wPiinT}. The statement follows from the fact that $T$ is convex. 
 \end{proof}
 
 \begin{rem}
 	We remark that $T$ strictly contains $\Mov(X)$. For example, $E_8\in\overline{C} \subset T$ is effective but not movable. Moreover, since ${\cal W} \cdot E_8$ and $-\frac{1}{2}K_X$ are contained in $T$, the Tits cone $T$ contains the whole effective cone $\Eff(X)$ by Theorem \ref{main:eff}. 
 \end{rem}

 Now we are ready to show that there exists a closed convex set ${\cal B}$ such that 
 \[
 T \cap {\cal B} = \bigcup_{w \in {\cal W}}w\cdot  \Pi.
 \]
  As before we identify $N^1(X)_{\R}$ with $\R^9$: an element $(d,m_1,\ldots,m_8)\in\R^9$ corresponds to a class $dH - \sum_{i=1}^{8}m_i E_i \in N^1(X)_{\R}$. 
 Let 
 \begin{equation}\label{eq:defB}
     B \coloneqq \{(d;m_1,\ldots,m_8) \in N^1(X)_{\R} \mid d -m_i \geq 0 \text{ and } m_i \geq 0 \text{ for all } i=1,\ldots 8\}.
 \end{equation}
 Then $B$ is the intersection of closed half-spaces in $\R^9$, hence it is closed and convex. Furthermore, we have $\Pi \subset B $.

Fix an element $w \in {\cal W}$.
As ${\cal W}$ is generated by the orthogonal reflections in (\ref{eq:weylgroup}), we can consider $w$ as an invertible linear map on $\R^9$. We define
\[
B_w\coloneqq w^{-1} (B).
\]
Then $B_w$ is closed and convex. Moreover, $D\in B_w$ if and only if $w\cdot D\in B$.

 We set
 \[
 {\cal B} = \bigcap_{w \in {\cal W}} B_w.
 \]
 Then  ${\cal B} $ is the intersection of infinitely many closed convex sets, thus it is closed and convex. In particular, $D\in {\cal B}$ if and only if $w\cdot D\in B$ for any $w\in {\cal W}$.
\begin{lem}\label{lem:wPiinB}
We have	${\cal W}\cdot  \Pi \subset {\cal B}$. 
\end{lem}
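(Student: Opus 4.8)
The plan is to reduce the lemma to the single inclusion $\MovC(X)\subseteq B$. First note that the set $\mathcal W\cdot\Pi$ is stable under $\mathcal W$, and that $\mathcal B=\bigcap_{w\in\mathcal W}B_w=\bigcap_{w\in\mathcal W}w^{-1}\cdot B$. Hence, in order to prove $\mathcal W\cdot\Pi\subseteq\mathcal B$ it is enough to prove the single inclusion $\mathcal W\cdot\Pi\subseteq B$: indeed, once this is known, applying $w^{-1}$ gives $\mathcal W\cdot\Pi=w^{-1}\cdot(\mathcal W\cdot\Pi)\subseteq w^{-1}\cdot B=B_w$ for every $w\in\mathcal W$, whence $\mathcal W\cdot\Pi\subseteq\bigcap_{w\in\mathcal W}B_w=\mathcal B$. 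By Theorem \ref{thm:coverMov} we already know $\mathcal W\cdot\Pi\subseteq\MovE(X)\subseteq\MovC(X)$, so everything comes down to showing $\MovC(X)\subseteq B$.

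To this end I would translate the inequalities defining $B$ into intersection numbers. Writing a class as $D=dH-\sum_{i=1}^8 m_iE_i$, one computes $D\cdot e_i=m_i$ and $D\cdot(h-e_i)=d-m_i$, so that $B=\{\,D\mid D\cdot e_i\ge 0 \text{ and } D\cdot(h-e_i)\ge 0 \text{ for } i=1,\dots,8\,\}$. Since $B$ is a closed convex cone and $\MovC(X)$ is, by definition, the closure of the cone generated by the classes of effective Cartier divisors $D$ with $\codim\Bs|D|\ge 2$, it suffices to check that every such $D$ lies in $B$.

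The two required inequalities are verified by a base-locus argument. If $D\cdot e_i<0$, then every member of $|D|$ contains every curve of class $e_i$; as these curves sweep out $E_i$, we get $E_i\subseteq\Bs|D|$, contradicting $\codim\Bs|D|\ge 2$ (this is exactly the mechanism already used in Remark \ref{rem:standardmov}). If $D\cdot(h-e_i)<0$, write $D$ as a non-negative combination of its prime components; then some component $D_k$ satisfies $D_k\cdot(h-e_i)<0$, so $D_k$ contains the strict transform of every general line through $p_i$, and since these strict transforms sweep out a dense subset of $X$ this forces $D_k=X$, which is absurd. Hence $\MovC(X)\subseteq B$, and by the reduction above $\mathcal W\cdot\Pi\subseteq\mathcal B$. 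I expect the only genuinely non-formal point to be the second base-locus check — namely, making precise that the lines through a fixed point $p_i$ dominate $X$, so that an effective divisor cannot have negative intersection with $h-e_i$; the remaining steps are bookkeeping and are essentially contained in the material preceding the lemma.
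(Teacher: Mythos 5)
Your proof is correct, but it takes a genuinely different route from the paper. The paper argues directly on the generators of $\Pi$ listed in Lemma \ref{lem:standardinPi}: each generator contains a movable prime divisor which is the strict transform of a hypersurface in $\pr^3$ (hence satisfies $d\geq m_i\geq 0$, i.e.\ lies in $B$), and by Theorem \ref{thm:weylelem} the same geometric description holds for every ${\cal W}$-translate, so $w\cdot D\in B$ for all $w$; convexity of $B$ then handles arbitrary elements of $\Pi$. You instead exploit the ${\cal W}$-stability of ${\cal W}\cdot\Pi$ to reduce the whole lemma to the single inclusion ${\cal W}\cdot\Pi\subset B$, feed in ${\cal W}\cdot\Pi\subset\MovE(X)$ from Theorem \ref{thm:coverMov}, and then prove the cone-level inclusion $\MovC(X)\subset B$ by reinterpreting the defining inequalities of $B$ as $D\cdot e_i\geq 0$ and $D\cdot(h-e_i)\geq 0$; the first follows from the base-locus condition (a negative intersection with $e_i$ would force $E_i$ into $\Bs|D|$), the second from the fact that the curves of class $h-e_i$ cover $X$, so this class pairs non-negatively with every effective (indeed pseudoeffective) divisor. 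Both arguments are complete. Your reduction is more conceptual and yields the stronger conclusion $\MovC(X)\subset{\cal B}$, since $\MovC(X)$ is itself ${\cal W}$-stable by Remark \ref{rem:weylaction}; the paper's version is more self-contained at the level of explicit generators and does not require the covering-curve analysis, relying instead on the geometric realization of Weyl group elements from Theorem \ref{thm:weylelem}.
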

\begin{proof}
Let $D\in \Pi$ be a generator as in Lemma \ref{lem:standardinPi}. Since the class $D$ contains a movable prime divisor $P$ which is the strict transform of a hypersurface in $\pr^3$, we have $D\in B$. By Theorem \ref{thm:weylelem}, for any $w\in {\cal W},$ the class $w\cdot D$ also contains a movable prime divisor $P'$  which is the strict transform of a hypersurface in $\pr^3$. Hence $w\cdot D\in B$ for any $w\in {\cal W}$. Therefore, $D\in {\cal B}$ and $w\cdot D\in {\cal B}$ for any $w\in{\cal W}$.

Now let $D=\sum_i \lambda_i D_i\in\Pi$, where $\lambda_i\in\R_{\geq 0}$ and $D_i$ is a generator of $\Pi$ for each $i$. For any $w\in {\cal W}$, we have $w\cdot D = \sum_i \lambda_i 
 (w\cdot D_i)$ and $w\cdot D_i\in B$, thus $w\cdot D \in B$ by convexity of $B$. Therefore, we obtain $D\in {\cal B}$ and thus $w\cdot D\in {\cal B}$ for any $w\in{\cal W}$.
\end{proof}

\begin{thm}\label{thm:wPiconvex}
	We have \[
	T \cap {\cal B} = \bigcup_{w \in {\cal W}}w\cdot  \Pi.
	\] In particular, ${\cal W}\cdot  \Pi$ is convex.
\end{thm}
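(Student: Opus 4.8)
The plan is to establish the two inclusions $\bigcup_{w\in{\cal W}} w\cdot\Pi \subseteq T\cap{\cal B}$ and $T\cap{\cal B}\subseteq \bigcup_{w\in{\cal W}} w\cdot\Pi$ separately; the convexity of ${\cal W}\cdot\Pi$ will then follow immediately, since $T$ is convex (it is a Tits cone, see Subsection \ref{subsec:weylgroup}) and ${\cal B}$ is convex by construction as an intersection of the closed convex sets $B_w$. The first inclusion is essentially already done: Lemma \ref{lem:wPiinT} gives ${\cal W}\cdot\Pi\subseteq T$ and Lemma \ref{lem:wPiinB} gives ${\cal W}\cdot\Pi\subseteq{\cal B}$, so it only remains to combine them.

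For the harder inclusion $T\cap{\cal B}\subseteq\bigcup_{w\in{\cal W}} w\cdot\Pi$, I would take a class $D\in T\cap{\cal B}$. Since $D\in T=\bigcup_{w\in{\cal W}} w\cdot\overline{C}$, there is some $w\in{\cal W}$ with $w\cdot D\in\overline{C}$; replacing $D$ by $w\cdot D$ (which is legitimate, since ${\cal B}$ is ${\cal W}$-invariant, hence $w\cdot D\in{\cal B}$ too, and since ${\cal W}\cdot\Pi$ is ${\cal W}$-invariant so it suffices to show $w\cdot D\in\bigcup w'\cdot\Pi$), I may assume $D\in\overline{C}\cap{\cal B}$. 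Now I want to show $D\in\Pi$ directly. Writing $D=(d;m_1,\dots,m_8)$, membership in $\overline{C}$ translates via \eqref{eq:fundchamber} into $2d\geq\sum_{i=1}^4 m_i$ together with $m_1\geq m_2\geq\cdots\geq m_8$ — that is, $D$ is in standard form (after possibly adjusting by a multiple of $K_X$, but note $-\frac12 K_X\in\Pi$ so this is harmless). Membership in ${\cal B}\subseteq B$ gives precisely $d\geq m_i\geq 0$ for all $i$. These are exactly the hypotheses of Lemma \ref{lem:standardinPi}, which concludes $D\in\Pi$ — \emph{provided} $D$ is a genuine divisor class; for a general real class in $\overline{C}\cap{\cal B}$ one approximates by rational (then integral, after scaling) classes in the interior and uses that $\Pi$ is closed and $\overline{C}\cap{\cal B}$ is the closure of its rational interior points, or argues that the defining inequalities of $\Pi$ are satisfied on the nose by the same Proposition \ref{prop:standform}-style decomposition carried out over $\R$.

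The one subtlety I anticipate — the main obstacle — is the interaction with the $K_X$-direction. The chamber $\overline{C}$ is only determined "modulo $K_X$" (every $\R$-multiple of $K_X$ lies in $\overline{C}$), so an element of $\overline{C}$ need not literally be in standard form: it is a standard-form class plus $\mu K_X$ for some $\mu\in\R$. I would handle this by observing that $K_X = -2(2H-\sum E_i) = -2L_8$, and that $-\frac12 K_X = L_8$ is one of the generators of $\Pi$; so adding a nonpositive multiple of $K_X$ (equivalently, a nonnegative multiple of $L_8$) stays inside $\Pi$, while adding a positive multiple of $K_X$ would violate the $d\geq m_i$ constraint unless one is already deep inside, and in any case the constraint $D\in{\cal B}$ pins down the representative. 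Concretely, since both $L_8=-\frac12 K_X$ and $0$ are the only $\R K_X$-translates landing in ${\cal B}\cap\{d\geq m_i\geq 0\}$, intersecting $\overline{C}$ with ${\cal B}$ removes this ambiguity and forces the standard-form representative, at which point Lemma \ref{lem:standardinPi} applies verbatim. This reduces the whole theorem to the combinatorial content already packaged in Lemmas \ref{lem:standardinPi}, \ref{lem:wPiinT}, \ref{lem:wPiinB} and the convexity of $T$.
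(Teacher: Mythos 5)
Your proposal is correct and follows essentially the same route as the paper: reduce, via a single ${\cal W}$-translation, to showing $\overline{C}\cap B\subset\Pi$ using the characterisation \eqref{eq:fundchamber} together with Lemma \ref{lem:standardinPi}, then pass from Cartier to $\Q$- to $\R$-classes by scaling and closedness of $\Pi$. The only superfluous point is your worry about the $K_X$-direction: \eqref{eq:fundchamber} already characterises $\overline{C}$ exactly by the standard-form inequalities, so no adjustment modulo $K_X$ is needed.
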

\begin{proof} 
By Lemmas \ref{lem:wPiinT} and \ref{lem:wPiinB}, we have $\bigcup_{w \in {\cal W}}w\cdot  \Pi\subset T \cap {\cal B} $. It remains to show the other inclusion.

\smallskip
{\em Step 1.} In this step we show that $\overline{C} \cap {B} \subset \Pi$. 

 Let $D\in \overline{C} \cap B$. 
 We first assume that $D$ is a Cartier divisor class and we can write it as $(d;m_1,\ldots,m_8)$. By (\ref{eq:fundchamber}) and (\ref{eq:defB}), we have 
	\begin{itemize}[leftmargin=*]
		\item $d \geq m_i \geq 0$ for all $i\in\{1,\dots,8\}$,
		\item $2d  \geq \sum_{i=1}^4 m_i$,
		\item $m_i \geq m_{i+1}$ for all $i\in\{1,\dots,7\}$.
	\end{itemize}
Hence $D$ is in standard form with $d\geq m_i\geq 0$ for all $i\in\{1,\dots,8\}$ and by Lemma \ref{lem:standardinPi}, we obtain $D \in \Pi$. For a $\Q$-divisor $D\in \overline{C} \cap B$, we have that $mD$ is Cartier for sufficiently large and divisible $m$, thus $D\in \Pi$. Now for an $\R$-divisor $D\in\overline{C} \cap B$, there exists a sequence of $\Q$-divisors $(D_n)_{n\in\N}$ in $\overline{C} \cap B$ converging to $D$. Since $D_n\in\Pi$ for any $n\in\N$ and $\Pi$ is closed, this implies $D \in \Pi$. 

\smallskip
{\em Step 2.} Let $D \in T \cap {\cal B}$ be an $\R$-divisor.  Since $D\in T = \bigcup_{w \in {\cal W}} w\cdot\overline{C}$, there exists an element $w \in {\cal W}$ and an $\R$-divisor $D'\in\overline{C}$ such that $w\cdot D' = D$. But $D \in {\cal B}$ implies in particular $w^{-1}\cdot D \in B$. 
Consequently, \[D'=w^{-1}\cdot D\in \overline{C} \cap B \subset \Pi\] by \textit{Step 1}, and thus $D = w\cdot D' \in w\cdot \Pi$.
\end{proof}

As an immediate consequence, we obtain the following result.

\begin{cor}\label{cor:coverMov}
    Let $\Pi$ be the rational polyhedral cone in $N^1(X)_{\R}$ defined as in Lemma \ref{lem:standardinPi}. Then 
    \[
   \Mov(X) \subset {\cal W} \cdot \Pi =\bigcup_{w \in {\cal W}} w \cdot \Pi \subset \MovE(X).
    \]
\end{cor}

\begin{proof}
Let $D \in \Mov(X)$. Then we can write 
\[
D = \sum_{i=1}^k \lambda_i D_i
\]
with $\lambda_i \in \R_{\geq 0}$ and $D_i \in \Mov(X)$ is a Cartier divisor. By Theorem \ref{thm:coverMov} we have that each $D_i$ is contained in ${\cal W} \cdot \Pi$. Now, as ${\cal W} \cdot \Pi$ is convex by Theorem \ref{thm:wPiconvex}, we have $D \in {\cal W} \cdot \Pi$.
\end{proof}

Finally, we have:
\begin{proof}[Proof of Theorem \ref{main:mov}]
    The first statement of part (ii) follows from Lemma \ref{lem:notratpol}.

    To show that the action of ${\cal W}$ on $\MovE(X)$ admits a fundamental domain, we apply Theorem \ref{thm:looi}
to $V = N^1(X)_{\R}$, $C = \Mov(X)$ and $G = {\cal W} \subset \GL(N^1(X)_{\R})$. Then $\overline{C} = \MovC(X)$ and
$C_+=\MovP(X).$

By Corollary \ref{cor:coverMov} there exists a rational polyhedral cone $\Pi \subset C_{+}$ with $C \subset G \cdot \Pi$.  Indeed, 
$\Pi \subset \MovP(X)$ as every generator of $\Pi$ is  movable and rational. 
Thus, the second condition of Theorem \ref{thm:looi} is satisfied and there exists a rational polyhedral cone $\widetilde{\Pi}$ which is a fundamental domain for the action of ${\cal W}$ on $\MovP(X)$:
\[
\MovP(X) = {\cal W} \cdot \widetilde{\Pi}.
\]
Then, combining Corollary \ref{cor:coverMov} and the first condition in Theorem \ref{thm:looi}, we obtain
\[
\MovE(X) \supset {\cal W} \cdot \Pi = \MovP(X) = {\cal W} \cdot \widetilde{\Pi}.
\]
Furthermore, by Proposition \ref{prop:main} we obtain $\MovE(X) = \MovP(X)$. Hence, $\widetilde{\Pi}$ is a fundamental domain for the action of ${\cal W}$ on $\MovE(X)$ and this finishes the  proof of part (ii). Since the cone $\Eff(X)$ is closed by Proposition \ref{prop:effconeclosed}, we have $\MovE(X)=\MovC(X)$ and thus statement (i) follows.
\end{proof}

\section{Other examples}

In this section, we give some further examples of log canonical Calabi-Yau pairs and study their cones of effective curves.

Our first example is a smooth threefold which is isomorphic in codimension one to $\pr^3$ blown up at eight very general points. More precisely, the threefold in the example below is obtained by flopping three of the six rational curves that are flopped by the standard Cremona transformation described in Subsection \ref{subsec:cremona}.

\begin{exa}{\rm (\cite[Subsection 5.1]{LesOtt16})}\label{ex:blpproduct}
    Let $Y \rightarrow \pr^1\times\pr^1\times\pr^1$ be the  blow up of $\pr^1\times\pr^1\times\pr^1$ at $6$ very general points $q_1,\dots,q_6$. Let $H_i$ be the pullback of the hyperplane class of $\pr^1$ under $i$-th projection $p_i\colon \pr^1\times\pr^1\times\pr^1\to \pr^1$ for $i=1,2,3$, and let $E_j\subset Y$ be the exceptional divisor over the point $q_j$ for $j=1,\dots,6$. Then the anticanonical class $-K_Y$ is given by 
    \[
    -K_Y = 2\left(\sum_{i=1}^3 H_i-\sum_{j=1}^6 E_j\right).
    \]
    It follows from \cite[Subsection 5.1, third paragraph]{LesOtt16} that $-K_Y$ is nef and not semiample, and the toric fans diagram therein shows that $Y$ is obtained from $X$ by flopping the three curves which are the strict transforms of the lines through one point and three others among the eight. Taking a general member $D\in|{-}K_Y|$, we obtain a log canonical Calabi-Yau pair $(Y,D)$.
\end{exa}

\begin{prop}
    In Example \ref{ex:blpproduct} the cone $\overline{\NE}(Y)$ is a rational polyhedral cone.
\end{prop}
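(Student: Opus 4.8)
The plan is to determine $\NE(Y)$ explicitly, following the strategy of Proposition~\ref{prop:coneofcurves}. Write $\pi\colon Y\to\pr^1\times\pr^1\times\pr^1$ for the blowdown, let $f_i\in N_1(Y)$ be the class of a fibre of the $i$-th projection and $e_j$ the class of a line in $E_j\cong\pr^2$, so that $N_1(Y)=\bigoplus_{i=1}^{3}\Z f_i\oplus\bigoplus_{j=1}^{6}\Z e_j$ with $H_i\cdot f_k=\delta_{ik}$, $E_j\cdot e_k=-\delta_{jk}$, $H_i\cdot e_j=0$ and $E_j\cdot f_k=0$. I would show that $\NE(Y)$ is the convex cone generated by the classes $e_j$ for $1\le j\le 6$ and $f_i-e_j$ for $1\le i\le 3$, $1\le j\le 6$; this is a finitely generated, hence closed and rational polyhedral, cone, which gives the statement. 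The inclusion "$\supseteq$" is immediate, since $e_j$ is represented by a line in $E_j$ and $f_i-e_j$ by the strict transform of the fibre of the $i$-th projection through $q_j$ (smooth there, so of multiplicity one).

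For the reverse inclusion I would take an irreducible curve $C\subset Y$. If $C\subset E_j$ for some $j$, then $[C]\in\R_{\ge 0}e_j$ because $E_j\cong\pr^2$. Otherwise $C$ is the strict transform of an irreducible curve in $\pr^1\times\pr^1\times\pr^1$ of tridegree $(a_1,a_2,a_3)$ with multiplicity $b_j\ge 0$ at $q_j$, so $[C]=\sum_i a_i f_i-\sum_j b_j e_j$ with all $a_i\ge 0$ and $b_j\ge 0$. The one extra input needed is the inequality $\sum_{i=1}^{3}a_i\ge\sum_{j=1}^{6}b_j$, which follows by intersecting $C$ with $-\tfrac12 K_Y=H_1+H_2+H_3-\sum_{j=1}^{6}E_j$: this divisor is nef because $-K_Y$ is nef, and $(-\tfrac12 K_Y)\cdot C=\sum_i a_i-\sum_j b_j\ge 0$.

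It then remains to prove the combinatorial fact, the analogue of Lemma~\ref{lem:comboflines}, that any class $\sum_i a_i f_i-\sum_j b_j e_j$ with $a_i,b_j\in\Z_{\ge 0}$ and $\sum_i a_i\ge\sum_j b_j$ is a non-negative combination of the classes $e_j$ and $f_i-e_j$. Comparing coefficients in $\sum_j\alpha_j e_j+\sum_{i,j}\beta_{ij}(f_i-e_j)$, this amounts to producing a $3\times 6$ matrix $(\beta_{ij})$ with non-negative entries, row sums $a_i$ and column sums at least $b_j$ (then $\alpha_j:=\sum_i\beta_{ij}-b_j\ge 0$ works); such a matrix exists because $\sum_i a_i\ge\sum_j b_j$, for instance by choosing column targets $d_j\ge b_j$ with $\sum_j d_j=\sum_i a_i$ and solving the resulting balanced transportation problem by the northwest-corner rule. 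This is the only step with any content, and it is elementary; in particular it also shows the classes $f_i$ are redundant, so no further generators appear. Combining the two inclusions yields $\NE(Y)=\overline{\NE}(Y)$ equal to the rational polyhedral cone on these $24$ classes. I do not expect a genuine obstacle here: the main point is just to recognise that nefness of the half-anticanonical class supplies exactly the inequality that makes the curve-class cone finitely generated.
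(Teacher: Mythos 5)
Your proof is correct, but it runs in the opposite (dual) direction from the paper's. The paper takes the same candidate cone $\mathcal{C}$ of curve classes (generated by $\ell_i$, $e_j$, $\ell_i-e_j$; your $24$ generators span the same cone, since $\ell_i=(\ell_i-e_j)+e_j$ makes the $\ell_i$ redundant), computes the dual cone $\mathcal{C}^{\vee}$ explicitly, and then checks that each generator $H_1+H_2+H_3-\sum_{j\in J}E_j=-\tfrac12K_Y+\sum_{k\notin J}E_k$ of $\mathcal{C}^{\vee}$ is nef: a curve meeting it negatively must, by nefness of $-K_Y$, meet some $E_{k_0}$ with $k_0\notin J$ negatively, hence lie in $E_{k_0}$ and be proportional to $e_{k_0}$, which gives intersection number $0$, a contradiction; duality then yields $\overline{\NE}(Y)=\mathcal{C}$. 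You instead argue on the curve side, in the spirit of Lemma \ref{lem:comboflines} and Proposition \ref{prop:coneofcurves}: you decompose the class of an arbitrary irreducible curve into the proposed generators, using the single inequality $\bigl(-\tfrac12K_Y\bigr)\cdot C\geq 0$, i.e.\ $\sum_i a_i\geq\sum_j b_j$, as input to a transportation-problem argument (which is indeed sufficient: non-negative row sums $a_i$ and column targets $d_j\geq b_j$ with $\sum_j d_j=\sum_i a_i$ always admit a non-negative matrix). Both proofs ultimately rest on the nefness of $-\tfrac12K_Y$; yours exhibits the extremal curve classes directly and shows $\NE(Y)$ is already closed, while the paper's route avoids the combinatorial lemma and produces the generators of $\Nef(Y)$ explicitly. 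One cosmetic remark: the curve you call ``a fibre of the $i$-th projection'' is really a fibre of the projection forgetting the $i$-th factor (the fibre of $p_i$ itself is a surface), but the intersection numbers you list make the intended class unambiguous.
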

\begin{proof}
    We follow the same strategy as in the proof of \cite[Theorem 3.1(6)]{CPS14}. Although \cite{CPS14} deals with the blowup of $\pr^1\times\pr^1\times\pr^1$ at $6$ special points such that the obtained threefold has nef and semiample anticanonical divisor, the same argument works here. We indicate below the part of their proof which is needed in our case.

    Let $\ell_i$ be the class dual to $H_i$ for $i=1,2,3$ and let $e_j$ be the class of a line in $E_j$ for $1\leq j \leq 6$. Consider the subcone ${\cal C}$ of the closed cone of curves $\overline{\NE}(Y)$ spanned by the curve classes:
    \[
    \ell_i, e_j, \ell_i- e_j, \text{ for }i=1,2,3 \text{ and } 1\leq j\leq 6.
    \]
    We can easily verify that the following divisor classes span the dual cone ${\cal C}^{\vee}$ of ${\cal C}$:
    \begin{equation}\label{eq:nefdivprod}
        H_1,H_2,H_3, H_1+H_2+H_3 - \sum_{j\in J}E_j, \text{ for }J\subset\{1,\dots,6\} \text{ with }1\leq |J|\leq 6.
    \end{equation}
    Thus we have $\Nef(Y)\subset {\cal C}^{\vee}$.

    It remains to show that every divisor class in (\ref{eq:nefdivprod}) is nef, so that ${\cal C}^{\vee}\subset \Nef(Y)$, and thus we conclude that $\Nef(Y)= {\cal C}^{\vee}$ and $\overline{\NE}(Y)= {\cal C}$ are rational polyhedral cones.

    Note that $H_1,H_2,H_3$ are nef, being pullbacks of ample divisors by morphisms. Now consider a subset $J\subset\{1,\dots,6\}$ with $1\leq |J| \leq 6$ and a divisor
    \begin{equation}\label{eq:anefclass}
         D_J \coloneqq H_1+H_2+H_3 - \sum_{j\in J}E_j = -\frac{1}{2}K_Y + \sum_{k\not\in J} E_k.
    \end{equation}
    Assume by contradiction that $D_J$ is not nef. Then there exists an irreducible curve $C\subset Y$ such that $D_J\cdot C<0$. Since $-K_Y$ is nef, by (\ref{eq:anefclass}) there exists an exceptional divisor $E_{k_0}$ with $k_0\not\in J$ such that $E_{k_0}\cdot C<0$. Hence $C\subset E_{k_0}$ and we obtain that $C$ is proportional to $e_{k_0}$. But $E_{k_0}\cdot e_{k_0}=-1$, $E_k\cdot e_{k_0}=0$ for $k\neq k_0$ and $-\frac{1}{2}K_Y\cdot e_{k_0}=1$, and thus we obtain $D_J\cdot C =0$ by (\ref{eq:anefclass}). This gives a contradiction and finishes the proof.
\end{proof}

Now we give an example of a log canonical Calabi-Yau pair whose cone of effective curves has infinitely many extremal rays. Note that this example is not isomorphic in codimension one to $\pr^3$ blown up at eight very general points, since they have different Picard numbers.
\begin{exa}{\rm (\cite[Example 3.6]{Xie24})}\label{ex:infiniteKtrivialrays}
    Let $S$ be $\pr^2$ blown up at $9$ very general points such that $-K_S$ is nef, not semiample and the unique member in $|{-}K_S|$ is a smooth elliptic curve. Now define $\mathcal{E}\coloneqq \OO_S\oplus \OO_S(-K_S)$ and $p\colon Y\coloneqq \mathbb{P}(\mathcal{E})\to S$. Then $-K_Y\sim 2\xi$ is nef and not semiample, where $\xi$ is a tautological divisor associated to $\OO_{\mathbb{P}(\mathcal{E})}(1)$. Moreover, a general member in $|{-}\frac{1}{2}K_Y|$ is a smooth surface isomorphic to $S$, and two general members in $|{-}\frac{1}{2}K_Y|$ intersect transversally along a smooth elliptic curve. Furthermore, a general member $D\in |{-}K_Y|$ is reducible, and $(Y,D)$ forms a log canonical Calabi-Yau pair.
\end{exa}
\begin{prop}
    In Example \ref{ex:infiniteKtrivialrays} the cone $\overline{\NE}(Y)$ has infinitely many $K_Y$-trivial extremal rays.
\end{prop}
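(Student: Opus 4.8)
The plan is to realise $S$ as a prime divisor inside $Y$ on which $\xi$ restricts to the trivial numerical class, and then to transport the infinitely many extremal rays of $\overline{\NE}(S)$ to $\overline{\NE}(Y)$ along this embedding. Write $p\colon Y=\mathbb{P}(\mathcal{E})\to S$ with $\mathcal{E}=\OO_S\oplus\OO_S(-K_S)$, so that $N^1(Y)=p^*N^1(S)\oplus\Z\xi$ and $-K_Y=2\xi$. The splitting of $\mathcal{E}$ provides a section $i\colon S_0\hookrightarrow Y$ of $p$, namely the one attached to the quotient $\mathcal{E}\twoheadrightarrow\OO_S$; it is an isomorphism onto its image (so $S_0\cong S$), one has $p\circ i=\mathrm{id}_S$ under this identification, and $i^*\xi\equiv 0$, equivalently $(-K_Y)|_{S_0}\equiv 0$, so $S_0$ is a $K_Y$-trivial divisor.

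First I would compute the nef cone of $Y$. Since every class on $Y$ is uniquely of the form $p^*A+t\xi$, and since $p^*A$ (for $A$ nef on $S$) and $\xi$ are nef, a class $D=p^*A+t\xi$ is nef if and only if $D\cdot f\ge 0$ (equivalently $t\ge0$, as $\xi\cdot f=1$ for a fibre $f$) and $A=D|_{S_0}$ is nef on $S_0\cong S$. This gives
\[
\Nef(Y)=p^*\Nef(S)+\R_{\ge 0}\,\xi .
\]
Dualising inside $N_1(Y)_\R$, using the projection formula and $\overline{\NE}(S)=\Nef(S)^\vee$, I obtain
\[
\overline{\NE}(Y)=\bigl\{\gamma\in N_1(Y)_\R \ \bigm|\ p_*\gamma\in\overline{\NE}(S)\ \text{ and }\ \xi\cdot\gamma\ge 0\bigr\}.
\]

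Next I would identify the face $F:=\xi^\perp\cap\overline{\NE}(Y)$, which is a genuine face of $\overline{\NE}(Y)$ because $\xi=-\tfrac12 K_Y$ is nef, and which lies in $K_Y^\perp$, so that every class in it is $K_Y$-trivial. Because $p|_{S_0}$ is an isomorphism, the restriction $i^*\colon N^1(Y)\to N^1(S_0)$ is surjective, and together with the non-degeneracy of the intersection form on the surface $S_0$ this forces $i_*\colon N_1(S_0)_\R\to N_1(Y)_\R$ to be injective. Its image lies in $\xi^\perp$ (since $i^*\xi\equiv0$) and has dimension $\rho(S)=\dim\xi^\perp$, hence equals $\xi^\perp$; as $p_*\circ i_*=\mathrm{id}$, the map $i_*$ is the inverse of $p_*|_{\xi^\perp}$. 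Combining this with the description of $\overline{\NE}(Y)$ above yields $F=i_*\bigl(\overline{\NE}(S)\bigr)$.

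Finally, $S$ is $\pr^2$ blown up at nine very general points, so by Proposition~\ref{prop:surfaceclosedeffcone} and Remark~\ref{rem:surfaceclosedeff} the cone $\overline{\NE}(S)=\Eff(S)$ is generated by the classes in ${\cal W}_{2,9}\cdot e$ together with $-K_S$, and ${\cal W}_{2,9}$ is infinite; hence $S$ carries infinitely many $(-1)$-curves, and since two distinct $(-1)$-curves have distinct and extremal classes, $\overline{\NE}(S)$ has infinitely many extremal rays. Transporting them by the linear isomorphism $i_*$ gives infinitely many extremal rays of the face $F$, which are therefore extremal rays of $\overline{\NE}(Y)$, and all of them are $K_Y$-trivial because $F\subset K_Y^\perp$. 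The step I expect to be the main obstacle is the exact identification of $F$: it is easy to check that the only \emph{irreducible} curves on $Y$ with $\xi$-degree zero have classes pushed forward from $S_0$, but this by itself does not determine the closed cone $F$, and it is precisely the explicit formula $\Nef(Y)=p^*\Nef(S)+\R_{\ge0}\xi$ and its dual that are needed to circumvent this closure issue.
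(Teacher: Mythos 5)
Your argument is correct, but it takes a genuinely different route from the paper. The paper works curve by curve: for each $(-1)$-curve $C\subset S$ it restricts $\mathcal{E}$ to $C$, identifies $S_C=p^{-1}(C)$ as $\mathbb{F}_1$ with negative section $\tilde{C}$ satisfying $K_Y\cdot\tilde C=0$ and $N_{\tilde C/Y}\simeq\OO_{\pr^1}(-1)^{\oplus 2}$, and then invokes \cite[Lemma 2.5]{Xie20} applied to the pair $(Y,S_C)$ to certify that $\R_{\geq 0}[\tilde C]$ is extremal. You instead determine the cones globally: since $\mathcal{E}$ is a sum of nef line bundles, $\xi$ is nef, which gives $\Nef(Y)=p^*\Nef(S)+\R_{\geq 0}\xi$ exactly as you say, and dualising identifies the face $\xi^\perp\cap\overline{\NE}(Y)$ with $i_*\overline{\NE}(S)$ via the zero section $i$; extremal rays of a face being extremal in the ambient cone, the infinitely many $(-1)$-curve rays of $\overline{\NE}(S)$ do the job. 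Note that the two proofs in fact exhibit the same rays: the paper's section $\tilde C$ is precisely $i(C)=S_0\cap S_C$, since the quotient $\mathcal{V}\twoheadrightarrow\OO_{\pr^1}$ is the restriction of $\mathcal{E}\twoheadrightarrow\OO_S$. What your approach buys is a complete description of $\Nef(Y)$ and $\overline{\NE}(Y)$ (strictly more than the statement asks), with no normal-bundle computation and no appeal to an extremality criterion for pairs; what the paper's approach buys is the extra geometric information that each extremal ray is spanned by a smooth rational curve with normal bundle $\OO_{\pr^1}(-1)^{\oplus 2}$, hence floppable, which feeds into the surrounding discussion of flops and the concluding Question. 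Both are complete; your closing worry about identifying the face $F$ is already resolved by your own dual-cone formula.
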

\begin{proof}
We follow closely the proof of \cite[Lemma 2.6]{CJR08}.
    Let $C$ be a $(-1)$-curve on the surface $S$. Twisting by some line bundle $\mathcal{L}$ on $C$, we may write
    \[
    \mathcal{V}\coloneqq \mathcal{E}|_C\otimes \mathcal{L} = \OO_{\pr^1} \oplus \OO_{\pr^1}(a)
    \]
    for some integer $a\geq 0$.

    Let $S_C \coloneqq p^{-1}(C) \simeq \pr(\mathcal{V})$ and let $\tilde{C}\to C$ be the section in $S_C$ over $C$ corresponding to the quotient $\mathcal{V} \to \OO_{\pr^1} \to 0$. Since $C$ is a $(-1)$-curve on $S$, we have $S_C\cdot \tilde{C} = p^* C\cdot \tilde{C} = C\cdot p_* \tilde{C} = -1$ and thus
    \begin{equation}\label{eq:restrictnormalbundle}
        N_{S_C/Y}|_{\tilde{C}}\simeq \OO_{\pr^1}(-1).
    \end{equation}
    
    Let $C_0$ be a tautological divisor associated to the line bundle $\OO_{\pr(\mathcal{V})}(1)$ and let $\ell$ be a fibre of $p|_{S_C}\colon S_C\to C\simeq \pr^1$.
    Then $\tilde{C}\in |C_0 - a\ell|$. Moreover, by the Grothendieck relation we have
    \begin{equation}\label{eq:Grothendieckrel}
    C_0^2 - C_0\cdot (p|_S)^*\big ( c_1(\mathcal{V})\big ) =0, \text{ i.e.\ } C_0\cdot \tilde{C}=0.
    \end{equation}
    Hence, by the adjunction formula and (\ref{eq:restrictnormalbundle})(\ref{eq:Grothendieckrel}), we obtain
    \begin{align*}
         -K_Y\cdot \tilde{C} &= (-K_{S_C} + S_C)|_{S_C} \cdot \tilde{C} \\
         &= \big ( 2C_0 - (p|_{S_C})^*(K_C+\det\mathcal{V}) \big )\cdot \tilde{C} - 1\\
         &= 2C_0\cdot \tilde{C} - (-2 + a) - 1\\
         &= 1-a.
    \end{align*}
    Since $-K_Y$ is nef and divisible by two in $\Pic(Y)$, we have $a=1$.
    Thus
    $-K_Y\cdot\tilde{C}=0$, and we obtain
    \[
    S_C\simeq \pr\big ( \OO_{\pr^1}\oplus \OO_{\pr^1}(1)\big ) \simeq \mathbb{F}_1
    \]
    with $-K_{S_C}\cdot \tilde{C}=1$, i.e.\ $\tilde{C}$ is the minimal section of the ruled surface $\mathbb{F}_1$.

    Therefore, we have $N_{\tilde{C}/{S_C}}\simeq \OO_{\pr^1}(-1)$. Together with (\ref{eq:restrictnormalbundle}), this implies the splitting of the normal bundle sequence
    \[
0 \to N_{\tilde{C}/{S_C}} \to N_{\tilde{C}/Y} \to N_{{S_C}/Y}|_{\tilde{C}} \to 0,
    \]
    and thus $N_{\tilde{C}/Y}\simeq \OO_{\pr^1}(-1)^{\oplus 2}$. Hence, by \cite[(5.5)]{Reid83}, blowing up the curve $\tilde{C}$ in $Y$, the exceptional divisor is isomorphic to $\pr^1\times\pr^1$ and we may blow down in the other direction onto some smooth threefold $Y^+$:
\begin{center}
    \begin{tikzcd}
                             & \Bl_{\tilde{C}}(Y) \arrow[ld] \arrow[rd] &     \\
Y \arrow[rr, "\chi", dashed] &                                          & Y^+,
\end{tikzcd}
\end{center}
where the birational map $\chi$ is a flop (which is a so-called Atiyah flop). Note that the $K_Y$-trivial ray generated by the class $[\tilde{C}]$ is extremal by \cite[Lemma 2.3]{Xie24} applied to the pair $(Y,S_C)$.
Therefore, every $(-1)$-curve on $S$ corresponds to a $K_Y$-trivial extremal ray. Now since there are infinitely many $(-1)$-curves on the surface $S$ (see \cite[Corollary 1]{Dol83}), we obtain infinitely many $K_Y$-trivial extremal rays.
\end{proof}

In view of these examples and the decomposition of the effective movable cone
 \[
\MovE(X) = \bigcup_{\varphi\colon X\dashrightarrow X' \text{ a finite sequence of flops}} \varphi^* \big((\NefE(X') \big).
\]
from Proposition \ref{prop:main}, it is natural to ask the following question:

\smallskip
{\bf Question.} Let $X$ be the blowup of $\pr^3$ at $8$ very general points and let $X\dashrightarrow X'$ be a finite sequence of flops to another smooth projective threefold $X'$. Is the nef cone $\Nef(X')$ a rational polyhedral cone?

\bibliographystyle{alpha}
  \bibliography{biblio.bib}

\end{document}